\documentclass[12pt]{amsart}
\oddsidemargin=0in \evensidemargin=0in 
\textwidth=6.6in \textheight=8.7in

\usepackage{amsfonts,amssymb, amscd, latexsym, graphicx, psfrag, color,float}
\usepackage{bbm}

\usepackage[all]{xy}

\usepackage{booktabs,enumerate}
\usepackage{multirow}
\usepackage{mathrsfs}

\usepackage{tikz}
\usetikzlibrary{matrix}
\usetikzlibrary{shapes.geometric}

\usepackage[margin=1in,marginparwidth=0.8in, marginparsep=0.1in]{geometry}

\usepackage[bookmarks=true, bookmarksopen=true,%
bookmarksdepth=3,bookmarksopenlevel=2,%
colorlinks=true,%
linkcolor=blue,%
citecolor=blue,%
filecolor=blue,%
menucolor=blue,%
urlcolor=blue]{hyperref}

\newtheorem{dummy}{dummy}[section]

\newtheorem{theorem}[dummy]{Theorem}

\newtheorem{proposition}[dummy]{Proposition}
\theoremstyle{definition}
\newtheorem{definition}[dummy]{Definition}
\newtheorem{construction}[dummy]{Construction}
\newtheorem{example}[dummy]{Example}
\newtheorem{remark}[dummy]{Remark}

\newcommand{\bC}{\mathbf{C}}

\newcommand{\bF}{\mathbf{F}}
\newcommand{\bG}{\mathbf{G}}

\newcommand{\bP}{\mathbf{P}}

\newcommand{\bR}{\mathbf{R}}
\newcommand{\bZ}{\mathbf{Z}}

\newcommand{\cC}{\mathcal{C}}

\newcommand{\cM}{\mathcal{M}}
\newcommand{\cN}{\mathcal{N}}
\newcommand{\cP}{\mathcal{P}}

\newcommand{\cT}{\mathcal{T}}

\newcommand{\op}{\operatorname}

\newcommand{\PGL}{\mathrm{PGL}}

\newcommand{\Ext}{\mathrm{Ext}}

\newcommand{\Hom}{\mathrm{Hom}}

\renewcommand{\log}{{\op{log}}}

\newcommand{\Sh}{\mathit{Sh}}

\newcommand{\Gm}{\mathbb{G}_{\mathrm{m}}}

\newcommand{\Li}{\mathrm{Li}}

\newcommand{\Mfr}{\cM_{\mathit{fr}}}
 \numberwithin{equation}{subsection}
\numberwithin{figure}{subsection}

\newcommand{\leg}{S}

\newcommand{\red}[1]{{\color{red}#1}}

\usepackage{tikz}

\title[Legendrian Surfaces]{{Cubic Planar Graphs and Legendrian Surface Theory}
} 
\author[David Treumann and Eric Zaslow]{David Treumann${}^*$ and Eric Zaslow${}^{**}$\\
{}\\
{\tiny * Department of Mathematics, Boston College}\\
{\tiny ** Department of Mathematics, Northwestern University}}

\begin{document}

\maketitle
\begin{abstract}
We study Legendrian surfaces determined by cubic planar graphs.  Graphs with distinct chromatic polynomials determine surfaces that are not Legendrian isotopic, thus giving many examples of non-isotopic Legendrian surfaces with the same classical invariants.  The Legendrians have no exact Lagrangian fillings, but have many interesting non-exact fillings.

We obtain these results by studying sheaves on a three-ball with microsupport in the surface.  The moduli of such sheaves has a concrete description in terms of the graph and a beautiful embedding as a holomorphic Lagrangian submanifold of a symplectic period domain, a Lagrangian that has appeared in the work of
Dimofte-Gabella-Goncharov \cite{DGGo}.  We exploit this
structure to find conjectural open Gromov-Witten invariants for the non-exact filling, following Aganagic-Vafa \cite{AV,AVms}.

\end{abstract}

\setcounter{tocdepth}{1}
\tableofcontents

\section{Introduction and Summary}
\label{sec:intro}

An exact Lagrangian filling of a Legendrian in a cosphere bundle determines a family of constructible sheaves \cite{NZ}.  
In this paper, we explore a curious counterpoint: Legendrian surfaces that give rise to beautiful moduli spaces of constructible sheaves, but have no exact fillings whatsoever.

For one-dimensional Legendrians, the families of fillings give the whole moduli space of constructible sheaves the rich structure of a cluster variety. 
This observation leads
to strong new lower bounds on the number of Hamiltonian isotopy classes of exact Lagrangian surfaces filling Legendrian
knots \cite{STW,STWZ}.  It is natural to wonder what structures are determined by Legendrian surfaces.

A fundamental example is related to the \emph{Harvey-Lawson cone}, a singular exact Lagrangian in $\bR^6$.  Nadler studied the microlocal category of this cone in \cite{N-cone}, proving that it is equivalent to a category of constructible sheaves on $\bR^3$ and furthermore equivalent to the category of coherent sheaves on
the pair of pants $\bP^1 \setminus \{0,1,\infty\}$.  We observe in this paper that this implies the non-fillability of the Legendrian boundary of the Harvey-Lawson cone. 
Nadler's example is fundamental for us:  we prove similar
results for a broad class of Legendrian surfaces of any genus.

We also use these moduli spaces to distinguish Legendrian surfaces with the same classical invariants.
Generally, much less is known about Legendrian surfaces than Legendrian knots.  The first examples of inequivalent Legendrian knots with the same classical invariants were obtained by Chekanov and distinguished by the Chekanov-Eliashberg differential graded algebra (dga), which can be computed
combinatorially.
In the case of Legendrian surfaces, the dga
is much more difficult to compute.  The best technology for enumerating the necessary holomorphic disks
is Ekholm's gradient flow trees \cite{E}.
Very recent work of Rutherford and Sullivan \cite{RS} exploits this technology to
reduce the computation of the dga of many Legendrian surfaces to a (still difficult) combinatorial procedure.  Our results, where the symplectic analysis is subsumed by the local, combinatorial nature of sheaves, demonstrate the strength of microlocal sheaf techniques in symplectic topology.

In addition, we are able to use the Lagrangian structure of this moduli space inside a period domain
to perform an Aganagic-Vafa-style mirror symmetry \cite{AVms} and compute conjectural open Gromov-Witten invariants
of the obstructed non-exact Lagrangians which fill our Legendrian surfaces.

We now explain these results in some more detail.

\subsection{Legendrian surfaces, fillings, and constructible sheaves}
Our Legendrian surfaces lie in an open domain of $S^5$ contactomorphic to
the cosphere bundle $T^{\infty}\bR^3$ and to the jet bundle $J^1(S^2)$.  They are genus-$g$ surfaces double-covering their base projection to $S^2$ with $2g+2$ branch points.
Such a Legendrian $\leg$ can be defined from a cubic planar graph $\Gamma \subset S^2$, by constructing a wavefront projection of $\leg$
in $S^2\times \bR \cong \bR^3 \setminus \{0\}$ that is generically two-to-one over the base projection to $S^2$, but one-to-one over $\Gamma.$
(The original $\leg,$ not just its wavefront, is in fact still $2:1$ over the edges and only $1:1$ over the $2g+2$ vertices.)
\begin{figure}[H]
\includegraphics[scale = .25]{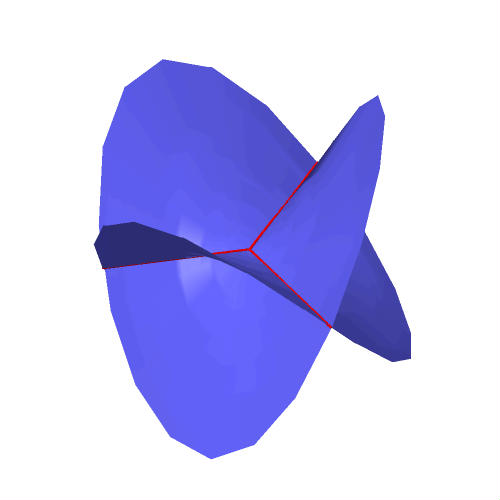}
\includegraphics[scale = .25]{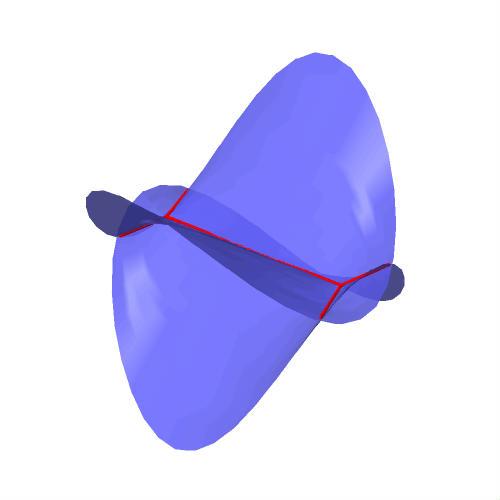}
\includegraphics[scale=0.3]{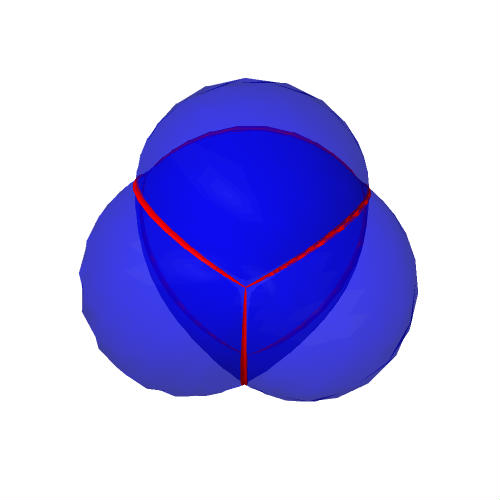}
\caption{Wavefronts near a vertex, edge, and at right for $\Gamma$ a tetrahedron.}
\label{fig:legprojs}
\end{figure}

\begin{remark}
The construction can be motivated as a higher-dimensional analogue of the Stokes Legendrian
\cite{STWZ} encoding the wild character varieties of complex curves near an irregular singularity of an holomorphic ODE.  For differential equations of second order, such a Legendrian can be specified by a finite set of points on the boundary, the locations where the asymptotic behavior of the two solutions switch.  In three dimensions, those points are replaced by cubic graphs (indeed the story in this paper has a generalization to arbitrary $3$-manifolds with cubic graphs drawn on their boundary) --- it would be very interesting to find a family of second-order PDEs whose asymptotic behavior exhibited the kind of Stokes phenomenon modeled on these graphs.
\end{remark}

 In this introduction, we will mainly restrict our attention to \emph{simple} (no loops or multiple edges) cubic planar graphs,
although the Legendrian surface associated to $\Gamma$ is of interest  more generally.

Given such a Legendrian $\leg \subset T^{\infty}\bR^3$ we can take it as singular-support data for a category of
constructible sheaves.   This category of constructible sheaves is equivalent \cite{NZ,N2} to a Fukaya category in $T^*\bR^3$ with asymptotic conditions on geometric branes defined by $\leg$.  We give an explicit description of the moduli space $\cM_r$ of objects of this category that have
microlocal rank $r$, focusing on $\cM := \cM_1.$  In \cite{KS}, sheaves in $\cM_r$ are called ``pure,'' or ``simple'' if $r = 1$.  
In the Fukaya category, points of $\cM_r$ correspond to stacks of $r$ basic branes ending on $\leg$ --- see Remark \ref{rem:boco}.

A point of $\cM$ is easy to describe:  think of the faces of $\Gamma$ as countries on a map of the globe $S^2,$ and color each one with a
point in $\bP^1$ so that countries which share an edge border have different colors.  The set of such choices provides a framed version
of the moduli space:  to get $\cM$, we must quotient by the automorphism group
$\PGL_2$ of $\bP^1$, which acts freely.  If we put $\widehat \Gamma$ for the dual graph,
this means $\cM$ is the set of graph colorings of $\widehat \Gamma,$ with colors chosen in $\bP^1,$ modulo $\PGL_2$.  As a variety, $\cM$ is defined over the integers and we may count its points over finite fields:

\begin{proposition}
\label{intro-prop-1}
Let $\Gamma$ be a simple, cubic planar graph, $\widehat \Gamma$ its dual graph. Let $P_{\widehat{\Gamma}}$ denote the chromatic polynomial, whose value $P_{\widehat{\Gamma}}(c)$ is the number of colorings of $\widehat{\Gamma}$ with $c$ colors.
Let $\bF_q$ be a field with $q$ elements.  Then 
\[
\#\cM/\bF_q = \frac{1}{q^3-q}\cdot P_{\widehat{\Gamma}}(q+1).
\]
\end{proposition}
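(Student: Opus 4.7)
The plan is to take for granted the description of $\cM$ already given: a point of the framed moduli is a proper coloring $c: F(\Gamma) \to \bP^1$ of the faces of $\Gamma$ (equivalently, the vertices of $\widehat\Gamma$), and $\cM$ itself is the quotient by the free $\PGL_2$-action. With this in hand, the identity is essentially a matter of counting.

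First I would interpret the right-hand side. Over $\bF_q$, the set $\bP^1(\bF_q)$ has exactly $q+1$ elements, so by definition of the chromatic polynomial, the number of proper colorings of $\widehat\Gamma$ by $\bP^1(\bF_q)$ is $P_{\widehat\Gamma}(q+1)$. This is precisely the cardinality of the framed moduli space $\widetilde{\cM}(\bF_q)$ sitting above $\cM(\bF_q)$. Since $\PGL_2(\bF_q)$ has order $q^3 - q$, once freeness of the action is established we get
\[
\#\cM(\bF_q) \;=\; \frac{\#\widetilde{\cM}(\bF_q)}{\#\PGL_2(\bF_q)} \;=\; \frac{P_{\widehat\Gamma}(q+1)}{q^3-q}.
\]

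The one nontrivial step — and the main obstacle — is verifying that $\PGL_2$ acts freely on the set of proper colorings. A nontrivial element $g \in \PGL_2$ is determined by its action on three distinct points of $\bP^1$, so if $g$ fixes a coloring pointwise in the sense that $g(c(f)) = c(f)$ for every face $f$, and if the coloring uses at least three distinct colors, then $g$ must be the identity. The point is that this hypothesis is automatic: since $\Gamma$ is cubic, every vertex $v$ of $\Gamma$ has three incident faces which are pairwise adjacent in $\widehat\Gamma$ (they form a triangle dual to $v$), so any proper coloring of $\widehat\Gamma$ must assign three distinct colors to these three faces. Hence every proper coloring uses $\geq 3$ colors, and the $\PGL_2$-stabilizer of every coloring is trivial.

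Combining these observations finishes the proof. A minor subtlety worth checking is that the description of $\cM$ really is defined over $\bZ$ so that ``counting $\bF_q$-points'' is meaningful; this follows from the fact that the noncoloring condition (distinctness of colors on adjacent faces) cuts out a Zariski-open subvariety of $(\bP^1)^{F(\Gamma)}$ on which $\PGL_2$ acts freely, giving a geometric quotient whose $\bF_q$-points are what we have counted.
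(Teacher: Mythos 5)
Your proof is correct and takes essentially the same approach as the paper's. The paper's own proof is extremely terse (two sentences), taking for granted the description of $\cM$ as the $\PGL_2$-quotient of proper $\bP^1$-colorings of the faces and the freeness of the action asserted earlier in the introduction; you supply the same count and additionally spell out the verification of freeness (any proper coloring of $\widehat\Gamma$ uses at least three colors because the three faces around each vertex of the cubic graph $\Gamma$ form a triangle in $\widehat\Gamma$, and an element of $\PGL_2$ fixing three distinct points of $\bP^1$ is the identity), which the paper does not repeat inside the proof itself.
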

\begin{proof}
The argument $q+1$ is the number of $\bF_q$-points of $\bP^1$.  The denominator is the order of $\PGL_2(\bF_q)$.  
\end{proof}

It is known \cite{Birkhoff} that if $G$ has $n$ vertices and $e$ edges, then $P_G(x) = x^n - ex^{n-1} + O(x^{n-2}).$
If $\Gamma$ is a cubic planar graph with $v$ vertices, $e$ edges and $f$ faces, then
it is easy to show that $$v = 2g + 2,\quad e = 3g + 3,\quad f = g + 3,$$
where $g$ is the genus of the corresponding Legendrian surface $\leg.$

\begin{theorem}
\label{intro-thm-2}
Let $\leg\subset T^{\infty}\bR^3$ be the genus-$g$ Legendrian surface defined by a
simple, cubic planar graph $\Gamma$.  Then $\leg$ has no
smooth oriented graded exact Lagrangian fillings in $\bR^6.$
\end{theorem}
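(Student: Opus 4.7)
Suppose, aiming for a contradiction, that $L\subset\bR^6$ is a smooth oriented graded exact Lagrangian filling of $\leg$; topologically $L$ is a compact oriented $3$-manifold with $\partial L\cong\Sigma_g$. By the Nadler--Zaslow microlocalization \cite{NZ,N2}, each pair $(L,\cL)$ with $\cL\in\mathrm{Loc}_1(L)$ a rank-$1$ $\bC^*$-local system determines a constructible sheaf on $\bR^3$ with microsupport in $\leg$ and microlocal rank $1$, i.e.\ an object of $\cM$. Varying $\cL$ produces an algebraic morphism
\[
\Phi:\mathrm{Loc}_1(L)\cong\Gm^{b_1(L)}\longrightarrow \cM
\]
which is injective on closed points by full faithfulness of the microlocalization (non-isomorphic local systems yield non-isomorphic sheaves).

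Next, a topological lower bound on the source. For the compact oriented $3$-manifold $L$ with boundary $\Sigma_g$, Poincar\'e--Lefschetz duality (``half lives, half dies'') makes the kernel of $H_1(\partial L;\bC)\to H_1(L;\bC)$ a Lagrangian subspace of the intersection form on $H_1(\Sigma_g;\bC)$, forcing $b_1(L;\bC)\geq g$. Since a simple cubic planar $\Gamma$ has $v=2g+2\geq 4$ vertices, $g\geq 1$, so $\Phi$ is a non-constant morphism from a positive-dimensional algebraic torus.

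The key input on the target is a ``hyperbolicity'' of $\cM$. Recall $\cM$ parametrizes $\bP^1$-colorings of the dual graph $\widehat{\Gamma}$ modulo the diagonal $\PGL_2$-action. For any four faces of $\Gamma$ whose colors are generically pairwise distinct --- for instance the three faces surrounding any vertex $v\in\Gamma$ together with a fourth face adjacent to one of them --- the cross-ratio defines a rational map $c:\cM\dashrightarrow\bP^1\setminus\{0,1,\infty\}$. Since the $\PGL_2$-invariant function field of ordered tuples of points in $\bP^1$ is generated by cross-ratios of $4$-subtuples, these $c$'s separate points of $\cM$ generically. On the other hand, any rational map $f:\Gm^n\dashrightarrow\bP^1\setminus\{0,1,\infty\}$ is constant: it is an invertible regular function on $\Gm^n$ with $1-f$ also invertible, so $f=c\,x^m$ for some $c\in\bC^*$, $m\in\bZ^n$, but $1-c\,x^m$ has a non-empty vanishing locus in $\Gm^n$ unless $m=0$. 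Combining, non-constancy of $\Phi$ and the separating property force some $c\circ\Phi:\Gm^{b_1(L)}\dashrightarrow\bP^1\setminus\{0,1,\infty\}$ to be non-constant, a contradiction.

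The main obstacle is establishing the ``separating family of cross-ratios'' in the general case. For the tetrahedral case ($g=1$, Harvey--Lawson) this is immediate, since Nadler \cite{N-cone} identifies $\cM$ with $\bP^1\setminus\{0,1,\infty\}$ itself. For general simple cubic planar $\Gamma$ one must show that cross-ratios coming from local vertex configurations of $\Gamma$ already detect non-constancy of $\Phi$ --- a combinatorial fact about the dual graph coloring moduli that I would prove by choosing a vertex $v\in\Gamma$, using $\PGL_2$ to normalize the three colors around $v$ to $0,1,\infty$, and then observing that any remaining face gives a non-constant coordinate function landing in $\bP^1\setminus\{0,1,\infty\}$. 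An alternative route is to exploit the embedding of $\cM$ as a holomorphic Lagrangian submanifold of a period domain mentioned in the abstract, importing hyperbolicity directly from that ambient structure.
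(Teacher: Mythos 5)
Your overall strategy---use a Lagrangian filling to produce a map $\Gm^{b_1(L)}\to\cM$, show $b_1(L)\geq g>0$ via half-lives-half-dies, and then derive a contradiction from some rigidity of $\cM$---is genuinely different from the paper's, which instead computes $\#\cM(\bF_q)=q^g-2g\,q^{g-1}+\cdots$ exactly from the chromatic polynomial of $\widehat\Gamma$ and observes this is strictly smaller than $(q-1)^g=q^g-g\,q^{g-1}+\cdots$ for $q\gg 0$. The paper's route needs only the subleading coefficient of the chromatic polynomial and requires no hyperbolicity of $\cM$.

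Your version, however, has a real gap in the ``hyperbolicity'' step, and I do not see how to close it. First, the lemma you invoke (``a unit $f$ on $\Gm^n$ with $1-f$ also a unit must be constant'') is true for \emph{regular morphisms} $\Gm^n\to\bP^1\setminus\{0,1,\infty\}$, not for rational maps; a rational map is only a morphism on a dense open $U\subsetneq\Gm^n$, where $\cO^*(U)$ is much larger than the monomials (e.g.\ $x\mapsto x+1$ on $\Gm^1\setminus\{-1\}$). So you must exhibit an honest morphism $\cM\to\bP^1\setminus\{0,1,\infty\}$, not merely a rational one. Second---and this is the more serious issue---the cross-ratio of the colors of four faces lands in $\bP^1\setminus\{0,1,\infty\}$ only when all four faces are pairwise adjacent, i.e.\ when they form a $4$-clique (a $K_4$ subgraph) in the dual triangulation $\widehat\Gamma$. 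If you normalize the three colors around a vertex to $0,1,\infty$, a fourth face color is constrained to avoid only those of $\{0,1,\infty\}$ corresponding to faces it actually borders; it is \emph{not} confined to $\bP^1\setminus\{0,1,\infty\}$ in general. And $\widehat\Gamma$ need not contain any $K_4$: for $\Gamma$ the cube graph, $\widehat\Gamma$ is the octahedron $K_{2,2,2}$, which is triangle-rich but has no $K_4$ (any four vertices include two in the same part). So for the cube---one of the paper's main examples---your proposed family of ``hyperbolic'' cross-ratios is empty. A $K_4$ in $\widehat\Gamma$ exists precisely when $\Gamma$ has a triangular face (a degree-$3$ vertex of $\widehat\Gamma$), and even then you get a single such cross-ratio, not the separating family your argument needs.

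The alternative you float at the end---importing hyperbolicity from the Lagrangian embedding $\cM\hookrightarrow\cP=H^1(\leg,\Gm)$---does not obviously help either, since $\cP$ itself is an algebraic torus and certainly not hyperbolic; the period coordinates $x_e:\cM\to\Gm$ of \eqref{eq:cross-ratio} are cross-ratios of four faces with exactly one non-adjacent pair, so they avoid $0$ and $\infty$ but not $1$. You would still need to verify a nontrivial rigidity statement about the image. The paper's finite-field count sidesteps all of this: it does not assert that $\cM$ contains \emph{no} torus, only that it is too small (at the level of the $q^{g-1}$ coefficient) to contain a $g$-dimensional torus chart, which is exactly what a filling $L$ with $b_1(L)\geq g$ would supply.
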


\begin{proof}
An oriented 3-manifold $L$ whose boundary is a genus-$g$ surface has $b_1(L) \geq g$.  One automatically has $w_2(L) = 0$, so that if $L$ is a graded exact Lagrangian filling, the inverse of microlocalization \cite{NZ} defines a torus chart $(\Gm)^{\times g} \hookrightarrow \cM$ as in \cite{STWZ}.
Thus over $\bF_q$, a filling gives $(q-1)^g = q^g - gq^{g-1} + \cdots$ distinct points of $\cM$.
On the other hand, recalling that for $\widehat{\Gamma}$ we have $n = g+3$ and $e = 3g+3$, we get
$$\#\cM/\bF_q = \frac{1}{q^3-q}\left( (q+1)^{g+3} - (3g+3)(q+1)^{g+2} + \cdots \right) = q^{g} - 2g\cdot q^{g-1} + \cdots$$
Taking $q$ large, we see $\cM$ is not big enough to accommodate any torus chart!\end{proof}

Nevertheless, we can construct smooth fillings that are not exact.  Suppose that $\Gamma$ is drawn on the surface of the ball $D^3$.  A \emph{foam} filling $\Gamma$ is a singular surface $\bF \subset D^3$ with codimension-one singularities that look like the letter ``Y'' times an interval, and codimension-two singularities (points) that look like the cone over the $1$-skeleton of a tetrahedron, and such that $\bF \cap \partial D^3 = \Gamma.$

\begin{construction}
From a foam in $D^3$ we construct an exact \emph{singular} Lagrangian filling of $\leg$.  The singularities are of Harvey-Lawson type, and can be smoothed away (in different ``phases'') to smooth non-exact Lagrangian fillings.  
\end{construction}

A foam is the same combinatorial structure that configurations of soap bubbles have --- the singularities are the Plateau borders of the soap film.  It is interesting to speculate whether, if we were to impose Plateau's laws on the foam (that the sheets of soap have constant mean curvature and meet at equilateral angles), the Lagrangian we construct could be chosen to be sLags.
The singular Lagrangians are easy to describe, they are branched double covers of the $1$-skeleton of the foam.  The smooth Lagrangians we construct are not just non-exact but \emph{obstructed}: they are not objects in a Fukaya category if one does not introduce a ``bounding cochain.''

\begin{remark}
\label{rem:boco}
In the Fukaya category constructed in \cite{NZ}, only exact branes were considered, but in most of our examples such exact Lagrangians are not available to support points of $\cM$ or $\cM_r$.  Instead, points of $\cM_r$ morally correspond to geometric Lagrangians, usually non-exact and even obstructed, together with a
$\mathrm{U}(r)$-connection.  In the two-dimensional version of this analogy exploited in e.g.~\cite{STWZ}, the connection can be taken to be flat, but in
three dimensions when the Lagrangian is obstructed it must obey a more complicated Maurer-Cartan equation. 
This is part of the theory of bounding cochains and curved $A_\infty$-categories, which so far has not influenced microlocal sheaf theory.
\end{remark}

%Since $\cM$ is the moduli space of objects in a Fukaya category, and Theorem \ref{intro-thm-2}
%disallows these objects to correspond
%to smooth exact Lagrangians, our results put strain on the standard heuristic that such constructible sheaves should come from
%geometric Lagrangians, exact or not, although sheaves still correspond to complexes of Lagrangians \cite{NZ,N2}. 

In any case, there is a map from $\cM$ that corresponds to ``restrict a local system to its boundary'' when an exact Lagrangian
does exists, and makes sense over the whole moduli space.  It is a special case of the $\mu\mathit{hom}$-functor of \cite{KS}, that
was called ``microlocal monodromy'' in \cite{STZ}.  This map 
\begin{equation}
\label{eq:lagemb}
\phi:\cM \to H^1(\leg,\bC^*)
\end{equation}
is isomorphic to one considered in \cite{DGGo}, and is known to be Lagrangian with respect to the symplectic structure on $H^1(\leg,\bC^*)$ induced by the intersection form --- something that follows in this sheaf-theoretic context from the general results of \cite{BD,ST}.

\subsection{Generalized Aganagic-Vafa mirror symmetry}
Recall that Aganagic-Vafa \cite{AV} fix a Lagrangian brane, then use the equation of the moduli space of that brane in the resolved conifold
to define conjectural open Gromov-Witten invariants.  In \cite{AKV} the construction was extended
to different ``phases'' and ``framings'' of the brane, and to other toric Calabi-Yau three-folds, including $\bC^3.$
In \cite{AVflop} the authors apply this construction to
conormals of knots and call this ``generalized SYZ mirror symmetry.'' 
Our method generalizes this technique
to three-dimensional branes that look nothing like tori, so we refer to it as ``generalized AV mirror symmetry.''

The original AV construction identifies a distinguished set of coordinates $x = e^u, y = e^v$
on an ambient symplectic $(\bC^*)^2$
and uses the equation of the moduli space $F(u,v) = 0$ to solve $v = -\partial_u W$ (the choice of sign is historical), where $W$ has a
four-dimensional interpretation as a superpotential, but also as the generating function for open disk invariants.
The procedure depends on a choice of ``framing'' which distinguishes the second coordinate, as the so-called ``phase'' only identifies
one of the two.

The method can be interpreted in the following way.  The moduli space $\cM$ of the brane is a Lagrangian submanifold
in $H^1(\leg,\bC^*) = (\bC^*)^{2g}$.  A \emph{phase} is a geometric Lagrangian, an oriented $3$-manifold $L$, contained in $\bR^6$ with boundary on $\leg$.  A phase determines a Lagrangian map $H^1(L,\bC^*) \to H^1(\leg,\bC^*)$, and we define a framing to be an extension of this to a symplectic map $T^* H^1(L,\bC^*) \to H^1(\leg,\bC^*)$.  Then it makes sense to try to express $\cM$ as the graph of the differential of a multivalued transcendental function $W:H^1(L,\bC^*) \to \bC$.
\begin{quote}
\emph{We conjecture that $W$ is the generating function of open Gromov-Witten disk invariants.}
\end{quote}
Currently, the Lagrangians considered in this paper fall outside the limited class for which open Gromov-Witten invariants are rigorously defined, so the conjecture is not strictly precise as it stands.  However, there is recent progress in the work of Solomon and Tukachinsky --- see Remark \ref{rem:jake-sara}.  Meanwhile, we can make a precise integrality conjecture, following Ooguri-Vafa \cite{OV}:
\begin{quote}
There are integers $a(d,f) \in \bZ$ and an order-two element $\varepsilon_f \in H^1(L,\bC^*)$, indexed by homology classes $d \in H_1(L,\bZ)$ and a framing $f$, such that
\begin{equation}
\label{eq:xmasW}
W(x) = \sum_d a(d,f) \Li_2\left((\varepsilon_f x)^d\right)
\end{equation}
where $(-)^d$ denotes the monomial function $H^1(L,\bC^*) \to \bC$ corresponding to $d$ and $\Li_2(z) = \sum \frac{1}{n^2} z^n$ is the classical dilogarithm function.
\end{quote}
Identities involving $\Li_2(z)$, $\Li_2(1/z)$, and other arguments related by M\"obius transformations prevent equation \eqref{eq:xmasW} from determining $a(d,f)$ uniqely.  We expect that by restricting the sum to a strictly convex (``Mori'') cone of elements in $H_1(L,\bZ)$ that support holomorphic disks, the coefficients $a(d,f)$ become uniquely determined and are the physical BPS numbers.  The translation by $\varepsilon_f$ functions as a kind of mirror map (change of variables to write superpotential correctly), that we do not have a great explanation for.

Before giving some examples let us discuss coordinates.  The choice of phase allows us to choose coordinates $(v_1,\ldots,v_g)$ on the universal cover of $H^1(\leg,\bC^*)$ that cut out $H^1(L,\bC)$, and a framing nails down conjugate coordinates
$u = (u_1,...,u_g)$.  These
choices identify the universal cover with $\bC^{2g} \cong T^*\bC^g$, and $W$ is the solution to $v_i = \pm \partial_{u_i} W$.

%
%
%In coordinates, the choice of phase allows us to 
%choose coordinates  $v = (v_1,...,v_g)$ on the universal cover of $H^1(\leg,\bC^*)$ that cut out $H^1(L,\bC)$, and a framing nails down conjugate coordinates
%$u = (u_1,...,u_g)$.  These choices identify the universal cover with $\bC^{2g} \cong T^*\bC^g$, and $W$ is the solution to $v_i = -\partial_{u_i} W$.
%\red{The precise choice of signs is rather delicate.  Here we take an ad hoc approach, leaving a more systematic treatment for \cite{STZ2}.
%
%\vskip 0.1in
%\begin{center} \emph{We conjecture that $W$ is the generating function of open Gromov-Witten disk invariants.}
%\end{center}
%\vskip 0.1in
%Further, let us indicate by $f$ the choice of phase and framing which define the coordinates $u_i$ and $v_i$.
%Put $x_i = e^{u_i}$ and for $d\in \bZ_{\geq 0}^g$ write $x^d = \prod_i x_i^{d_i}.$  Then Ooguri-Vafa integrality \cite{OV} (see Eq. \eqref{eq:ovintegrality}) implies that,
%possibly up to the addition of a polynomial in the $u_i$, $W$ has the form
%$$W = \sum_{d\in \bZ_{\geq 0}^g \setminus \{0\}} a(d,f) \Li_2(x^d),\qquad a(d,f) \in \bZ,$$
%where $\Li_2(z) = \sum \frac{z^n}{n^2}.$  The $a(d,f)$ are framing-dependent BPS numbers.}

\medskip
We implement this generalized AV mirror symmetry explicitly in several examples.

\subsection{Examples}

\begin{itemize}
\setlength{\itemsep}{10pt}
\item When $\Gamma$ is the complete graph with four vertices, we have $f = g + 3 = 4,$ so $g=1$ and the filling is a solid torus.
It is the Aganagic-Vafa brane in $\bC^3$.  Then $\cM \subset (\bC^*)^2$ is defined by $$x + y = 1,$$ a pair of pants,
and here
we have implicitly chosen a phase and (zero) framing to write $x = e^u, y = e^v.$  Then $v = -\partial_u W$ identifies
$W = {\rm Li}_2(x) := \sum_{n>0}\frac{1}{n^2}x^n$, as in the work of Aganagic-Vafa \cite{AV}.  The procedure is identical to theirs in this example,
so different framings --- making the replacement $u \to u + pv$ --- give the framing-dependent
disk invariants computed by Aganagic-Klemm-Vafa \cite[Section 6.1]{AKV}.
\item When $\Gamma$ is a triangular prism, $g = 2$ and the moduli space $\cM$ is cut out by two equations:
$$x_1 + y_1 = 1,\qquad x_2 + y_2 = 1.$$
Then $\cM$ is a product of two copies of the pair of pants, and zero framing ($x_i = e^{u_i}, y_i = e^{v_i}$) gives
$W = {\rm Li}_2(x_1) + {\rm Li}_2(x_2).$
``Diagonal'' framings $u_1 \to u_1 + p_1 v_1, \; u_2 \to u_2 + p_2 v_2$ lead to different BPS numbers, but exactly as above.
Here, however, we also have the possibility of non-diagonal framings such as
$$u_1 \to u_1 + p v_2, \qquad u_2 \to u_2 + p v_1.$$
For example, when $p = 1$ we find $W(x_1,x_2) = {\rm Li}_2(x_1) + {\rm Li}_2(x_2) - {\rm Li}_2(x_1x_2).$
More generally, framings are determined by a symmetric
$2\times 2$ integer matrix $M_{i}{}^j$ via $u_i \to u_i + M_{i}{}^j v_j.$
We consider these examples in Section \ref{sec:tent}.
While we generally cannot write the superpotential $W$ in closed form, we can perform strict integrality checks:
in all cases considered, we derive integer BPS numbers $a(d,f)$
using the Ooguri-Vafa $1/d^2$ multiple cover formula \cite{OV}.
\item When $\Gamma$ is the union of vertices and edges of a cube, $g=3$ and we can similarly define
multi-coordinates $(u,v)$ for $\bC^6\cong T^*\bC^3_u$.  Then (the universal cover of) $\cM$ is a graph of $-dW(u),$ where
$$W = {\rm Li}_2(x_1) + {\rm Li}_2(x_2) + {\rm Li}_2(x_3) - {\rm Li}_2(x_1x_2) - {\rm Li}_2(x_1x_3),$$
with $x_i = e^{-u_i}.$  The answer is written in terms of integer linear combinations of dilogarithms, meaning the conjectural disk
invariants obey integrality.
\end{itemize}

\begin{remark}
Cubic planar graphs define triangulations of the plane, and graph mutations correspond to Pachner 2-2 moves.
This and the appearance of dilogarithms comes from an intimate relationship between the present work and cluster
theory.  In future work with Linhui Shen \cite{STZ2}, we will describe the process of mutation and the relationship of $W$ to the
DT series in cluster theory.
\end{remark}

\subsection{Relation to previous works in physics}
\label{sec:priorwork}

While preparing this document, we learned of prior constructions with signifcant overlap to our own.
We will try to explain the connections, similarities and differences to the present work.
While the works in Section \ref{sec:dimofteetal} appeared first, we describe the relations in reverse chronological
order due to the greater similarity in the approach of the latter works.

\subsubsection{The work of Cecotti, C\'ordova, Espahbodi, Haghighat, Neitzke, Rastogi and Vafa}
\label{sec:vafaetal}
 
 In the series of works \cite{CCV,CNV,CEHRV}, the above-named authors consider the dimensional reduction
 of the theory on a stack of two overlapping M5-branes wrapping a three-dimensional spacetime cross a Lagrangian three-fold.
 The two recombine into a single-M5 brane, the Lagrangian being a branched double cover over $\bR^3.$
 The authors also consider the Harvey-Lawson cone and singular and smooth tangles --- see in particular
 Section 5.1.2 of \cite{CCV} and Section 3.1 of \cite{CEHRV}.
 
 Those authors consider Seifert surfaces for the tangles, which they use to construct actions for the effective
 three-dimensional theory.\footnote{We use the word ``action'' to avoid overloading the word ``Lagrangian.''}
 This is an interacting gauge theory, which in the Coulomb branch has a $\mathrm{U}(1)$ gauge field for
 each homology loop of the three-manifold, with Chern-Simons levels determined by the
 self-linking matrix of the tangle.  Chiral matter and superpotentials have M-theoretic descriptions
 via holomorphic maps with various boundary conditions.
 The guiding principle is that the effective physics should be independent of the Seifert surface
 used to describe the three-manifold and construct the action:  different Seifert surfaces thus determine dual theories.
 Equivalence of these theories is shown to give a physical explanation of the wall-crossing formula of Kontsevich-Soibelman.
 
 In the three-dimensional theory, BPS states are formed by M2-branes ending on the M5-brane,
 so holomorphic disks bounding circles in the Lagrangian.  The circles surround two strands of the tangle,
so massless states arise for singular tangles, when the disk shrinks to zero size.  The fields of the three-dimensional
theory that create such states are chiral multiplets.

M2-brane instantons give rise to superpotential terms which couple the massless particles.  The boundary of
the three-ball M2 brane is a two-sphere which projects to a polygon, the sides of which label the different
coupled chiral multiplets.

\subsubsection{The work of Dimofte, Gabella, Gaiotto, Goncharov, Gukov and Hollands}
\label{sec:dimofteetal}

As just mentioned in Section \ref{sec:priorwork}, the above-named authors also consider M5-branes defined
by a three-dimensional Lagrangian \cite{DGGo, DGGu, DGH}, usually a hyperbolic manifold
such as a branched double cover of a knot.
(These authors also consider $n$-fold covers, but not through a description of its branch locus.)
They define an isomorphic moduli space $\cM$ and map \eqref{eq:lagemb}.
They investigate the relationship between, on the one hand, Chern-Simons theory and hyperbolic three-manifolds, and on the
other, three-dimensional supersymmetric field theories, their supersymmetric indices, superpotentials, and relation to four-dimensional
BPS states.  Some sketches are given in Appendix \ref{app:physics}.  

The authors construct a moduli space of $G$-local systems on a three-manifold $L$ with an ideal triangulation --- i.e.
a decomposition into hyperbolic tetrahedra with boundary surfaces --- where $G = \PGL_n(\bC).$
Gluing these together, one arrives at either a closed three-manifold or
one with a boundary surface.  For example, a boundary torus is relevant to the knot complement of a hyperbolic knot, and this
is a prime example.  The tetrahedra are truncated and thus have four triangles near the vertices, and one assigns an element of $\bP^1$
at each triangle.  The space $\mathrm{Loc}_G(\partial L)$ is known to be symplectic, and the restriction of a local system to the boundary
embeds $\mathrm{Loc}_G(L)$ as a Lagrangian submanifold.

\medskip

In forthcoming work with Shen \cite{STZ2}, we use cluster theory to determine the wavefunctions/partition functions
appearing in these physical settings, including the dependence on framings.
The present work gives a conjectural relationship between these functions and the framing-dependent superpotentials 
encoding open Gromov-Witten invariants.

%
%\red{ It is not clear to us how to draw a straight line from the Kontsevich-Soibelman wavefunctions of these
% works to the superpotentials encoding open Gromov-Witten invariants, as in \cite{AV,AKV} and the present work.
%They seem intimately related, as seen by the prominent role of dilogarithms and holomorphic disks.}
%  

\vspace{2mm}\noindent {\bf Acknowledgements.}
It is a pleasure to thank Roger Casals, Bohan Fang, Xin Jin, Melissa Liu, Emmy Murphy, David Nadler, and Linhui Shen for sharing their insights, time and suggestions. 
Roger Casals and Emmy Murphy were quickly able to settle many questions raised in an early draft of this paper.
We also thank Tudor Dimofte and Cumrun Vafa for helpful discussions about their respective joint works. 
%bringing some of his earlier joint works to our attention, as well as Tudor Dimofte and Cumrun Vafa for clarifying aspects of their
%respective joint works.  
We thank Jake Solomon and Sara Tukachinsky for discussing the role of framings in their work.
D.T. is supported by NSF grant DMS-1510444.   E.Z. is supported by NSF grant DMS-1406024.

\section{The hyperelliptic wavefront}
\label{sec:wavefront}

As in the introduction, let $\Gamma \subset S^2 \subset \bR^3$ be a cubic planar graph.  We assume each edge is smoothly embedded, and that in the tangent space to a vertex the three edges are linearly independent and do not lie in a half-plane.  We write $v,e,f$ for the number of vertices, edges, and faces of $\Gamma$.  Euler's relation gives $v -e  +f = 2$ and the cubicness of $\Gamma$ gives $3v = 2e$, so there is an integer $g$ such that  
\begin{equation}
\label{eq:fveg}
f = g+3 \qquad v = 2g+2 \qquad e = 3g+3
\end{equation}

Let $\pi:\leg \to S^2$ be the connected oriented double cover of $S^2$, branched over the vertices of $\Gamma$ --- the Riemann-Hurwitz formula shows $\leg$ has genus $g$.  Write $\iota_\leg$ for the nontrivial Deck transformation of $\pi$.

\begin{definition}
\label{def:hyperelliptic-wavefront}
We define a \emph{hyperelliptic wavefront} modeled on $\Gamma$ to be a map $i:\leg \to \bR^3$ with the following properties:
\begin{enumerate}
\item The image of $\leg$ does not contain the origin so that $i$ can be written in spherical coordinates $\leg \to S^2 \times \bR_{>0}$.
\item In those coordinates we have $i(x) = (\pi(x),r(x))$ where $\pi$ is the covering map and $r:\leg \to \bR_{>0}$.
\item The map $r$ obeys $r(\iota_\leg(x)) = 1/r(x)$, with $r(x) = 1$ exactly on the preimage of $\Gamma$.
\item Near each vertex of $\Gamma$, we may find coordinates $(x,y)$ on $S^2$ and $(u,v)$ on $\leg$ such that $(\pi,r)$ is given by
\begin{equation}
\label{eq:vertexequation}
x(u,v) = u^2 - v^2 \qquad y(u,v) = 2uv \qquad r(u,v) = \exp(2u^2 v - \frac{2}{3} v^3)
\end{equation}
In other words, after setting $w = u + iv,$ we have $x+iy = w^2$ and $\log(r) = \frac{2}{3} \mathrm{Im}(w^3)$.
\item $r$ has exactly $(2g+2)+2(g+3)$ critical points: the $2g+2$ critical points of $\pi$, and a unique maximum and unique minimum over each face of $\Gamma$.
\end{enumerate}
\end{definition}

There is a hyperelliptic wavefront modeled on every $\Gamma$.  One way to construct it is to identify $S^2$ with the Riemann sphere (with holomorphic coordinate $z$, say), and choose a Strebel differential $f(z)dz^2$ whose non-closed trajectories are the edges of $\Gamma$.  Such a differential will have a unique quadratic pole $p_i$ in each face, and the Strebel condition implies that $\tilde{r}(z) = \exp(\pm \mathrm{Im}\int\sqrt{f(z)} dz)$ obeys conditions (1)-(5) --- except it takes the values $\{0,\infty\}$ on the poles $p_i$.  We obtain $r$ by damping $\tilde{r}$ near each pole $p_i$.  We call this the ``Strebel model'' for the hyperelliptic wavefront.

The local model at a vertex \eqref{eq:vertexequation} gives an immersed hypersurface with double points along $\mathrm{Im}(w^3) = 0$, shown here in red.
\begin{center}
\includegraphics[scale = .25]{localmodel.jpeg}
\end{center}
It fails to be an immersion at $w = 0$, but it has a well-defined normal direction there: radially outwards.  The oriented Legendrian lift (also called the oriented Nash blowup) gives a smooth Legendrian embedding to $T^{\infty} \bR^3 \cong \bR^3 \times S^2$.  The Nash blowup construction shows that one may recover a hyperelliptic wavefront $i:S \to \bR^3$ from the image $\Psi := i(S)$ as a subset of $\bR^3$.

\begin{remark}
\label{rem:polar-bear}
A few comments are in order.
\begin{enumerate}
\item 
The wavefront immersion is not generic among front projections --- it is possible to ``push three swallowtails'' out of any vertex, either on the top or the bottom of the figure.  In other words it is the critical front of a ``bifurcation of fronts.''  In low dimensions Arnold gave a classification of such bifurcations, and this one is called $\mathrm{D}_4^{-}$.
\item When $\Gamma$ is not cubic, but the tangential angles between adjacent edges at a degree $d$ vertex are $(360/d)^{\circ}$, it is still possible to construct $\leg$ and a wavefront immersion to $\bR^3$, but it is no longer smooth over the vertices.  The local model (4) at a degree $d$ vertex looks like the Legendrian lift of the plane curve singularity $w^2 = z^{d-2}$. 
\item \label{third} We define the ``Lagrangian projection'' of a hyperelliptic wavefront to be the projection to $T^* S^2$ whose cotangent coordinates are given by the gradient of $\log(r)$.  The Lagrangian projection is a Lagrangian immersion with $g+3$ double points.  In the Strebel model, there is a neighborhood $U_{\Gamma} \subset S^2$ of $\Gamma$ for which the Lagrangian projection $\pi^{-1}(U_{\Gamma}) \to T^* S^2 = T^* \bP^1$ is a holomorphic embedding, but the immersion $S \to T^* S^2$ cannot be made holomorphic.

\item \label{fourth} Suppose $M$ is a $3$-manifold, $\Sigma \subset M$ is a two-sided smooth surface in $M$, and $\Gamma \subset \Sigma$ is a cubic graph.  If $\leg$ denotes the double cover of $\Sigma$ branched over the vertices of $\Gamma$, we have an associated wavefront $i:\leg \to M$, with $i(S) \cap \Sigma = \Gamma$.  Of course, the name ``hyperelliptic'' is less appropriate when $\Sigma$ is not a sphere.  An interesting general class of examples are suggested by the Legendrian knots in the cocircle bundles over surfaces, considered in \cite{STWZ}.  If $\partial M \times [0,1] \to M$ is a collar neighborhood of the boundary, then we can set $\Sigma = \partial M \times \{\frac{1}{2}\}$ --- we refer to these as ``Stokes wavefronts of degree $2$,'' although we do not know how to define Stokes wavefronts of higher degree.  The wavefronts of Definition \ref{def:hyperelliptic-wavefront} are included in the class of Stokes wavefronts, they are just the case when $M$ is a ball.

\end{enumerate}
\end{remark}

When $\Gamma$ is the edge graph of a tetrahedron, the hyperelliptic wavefront is as shown in Figure \ref{fig:cloudytet} below.
\begin{figure}[H]
\includegraphics[scale=0.3]{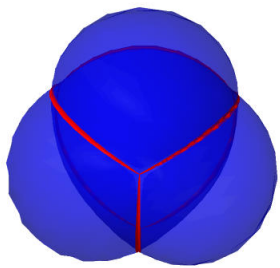}
\caption{The hyperelliptic wavefront defined by the edge graph (in red) of a tetrahedron.}
\label{fig:cloudytet}
\end{figure}

\subsection{The extended Legendrian $\leg^+$}
\label{subsec:extended-leg}
In \cite[\S 2.2.1]{STZ}, we associated to each front diagram $\Phi \subset \bR^2$ a Legendrian graph $\Lambda^+$, obtained from the Legendrian lift $\Lambda$ of $\Phi$ by attaching a Legendrian chord joining the two points projecting to a crossing.  The cone over $\Lambda^+$ is the smallest Lagrangian containing the cone over $\Lambda$ that is closed under addition.  (Thus $\Lambda^+$ is ``cragged'' in the language of \cite{Nicolo}.)

We may consider a similar extended form of $\leg$, which we denote by $\leg^+$.  If $i$ is an edge of $\Gamma$, its preimage is parametrized by a circle $\gamma_i:S^1 \to \leg$, with a distinguished orientation we discuss in \S\ref{subsec:period-domain}.  If the edge $i$ is not a loop,
then $\gamma_i$ is an embedded circle, while if $i$ is a loop from vertex $v$ to itself, $\gamma_i$ is an immersion with a single double point at $\pi^{-1}(v)$.  We attach a Legendrian disk to $\leg$ whose boundary is along $\gamma_i$.  

To define $D_i$, first note that each point $p$ along $i$ has two associated outward conormal rays, one along each branch of the wavefront immersion.  $D_i$ is the subset of the cosphere bundle over $i$ which, at a point $p \in i$, consists of the acute angle of the conormal rays between those two.  Note this angle shrinks to zero at the endpoints of $i$, where $D_i$ is just a point.  Thus $D_i$ is the disk formed by the family of intervals over the interval $i,$ collapsing at the boundary.

\subsection{Edge and disk moves}
\label{subsec:edge-and-disk-moves}

If $i$ is an edge of $\Gamma$ that is not a loop, we may construct a new graph $\Gamma'$ by changing $\Gamma$ in a neighborhood of $i$ according to the following diagram
\begin{center}
\begin{tikzpicture}
\draw (-2,0)--(-1.5,.5);
\draw (-2.75,-.25)--(-2,0)--(0,-1)--(.75,-.75);
\draw (0,-1)--(-.5,-1.5);

\draw (1+4,0)--(1+3.5,.5);
\draw (1+4.75,-.25)--(1+4,0)--(1+2,-1)--(1+2-.75,-.75);
\draw (1+2,-1)--(1+2.5,-1.5);

\node at (-.5,0) {$a$};
\node at (-2.5,.5) {$b$};
\node at (.5,-1.5) {$d$};
\node at (-1.5,-.75) {$c$};

\node at (5.5,.5) {$a$};
\node at (3.5,0) {$b$};
\node at (4.75-.1,-1+.1) {$d$};
\node at (2.5,-1.5) {$c$};

\end{tikzpicture}
\end{center}
The faces of $\Gamma$ and of $\Gamma'$ are in natural bijection with each other.  By performing this move at different edges one can get from any graph to any other graph.  Indeed this is the sense in which cubic planar graphs label the top-dimensional cells in the Harer-Penner-Mumford-Thurston complex for $M_{0,n}$, which is connected.

Let $U_i \subset S^2$ be a neighborhood of $i$.  If $i$ is not a loop, the preimage of $U_i$ in $\leg$ is an annulus, and its preimage in $\leg^+$ is an annulus with a disk attached --- let us denote them by $A$ and $A^+$.   The Lagrangian projection of $A^+$ is an embedding into $T^* U_i$ --- it is an exact, \emph{singular} Lagrangian of the kind that has been considered by \cite{Yau}.  Yau considers a ``disk move'' which is part of our model for cluster transformations in \cite{STWZ,STW}.  In the present context these are related:

\begin{center}
\includegraphics[scale = .25]{edgeduck.jpeg} \reflectbox{\includegraphics[scale = .25]{edgeduck.jpeg}}
\end{center}

The two exact Lagrangian surfaces on either side of a disk move are part of the ``focus-focus'' family of Lagrangian annuli:

\begin{proposition}
\label{prop:same-classical-invariants}
Let $\leg$ and $\leg'$ be two Legendrian surfaces associated to $\Gamma$ and $\Gamma'$, with the same number of vertices.  Let $j$ and $j'$ denote the exact Lagrangian immersions into $T^* S^2$, each with $g+3$ double points.  Then $j$ and $j'$ are isotopic to each other through (non-exact) Lagrangian immersions that do not increase or decrease the number of double points.
\end{proposition}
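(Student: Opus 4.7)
My plan is to reduce to a single edge move and then construct the isotopy locally inside a neighborhood of the affected edge.

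\textbf{Step 1 (Reduction to a single edge move).} As noted just after the edge move diagram, cubic planar graphs with a fixed number of vertices are connected by sequences of edge moves (they index the top cells of the Harer-Penner-Mumford-Thurston complex for $M_{0,n}$). Concatenating isotopies, it suffices to treat the case where $\Gamma'$ is obtained from $\Gamma$ by a single edge move at an edge $i$.

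\textbf{Step 2 (Localization rel boundary).} Using the Strebel construction from Definition \ref{def:hyperelliptic-wavefront}, I can arrange the hyperelliptic wavefronts for $\leg$ and $\leg'$ to coincide on $S^2\setminus U_i$, where $U_i$ is a small disk containing the edge $i$ and its two endpoints; this is possible because an edge move is a purely local modification of the graph. Then $j$ and $j'$ agree as exact Lagrangian immersions on $T^*(S^2\setminus U_i)$, and the problem reduces to producing a Lagrangian isotopy between $j(\leg)\cap T^*U_i$ and $j'(\leg')\cap T^*U_i$ fixing a neighborhood of the boundary.

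\textbf{Step 3 (Local picture).} As noted in Remark \ref{rem:polar-bear}(3), the $g+3$ double points of the Lagrangian projection come from the unique critical points of $r$ in the interior of each face, and so are disjoint from $U_i$. Thus on the annulus $\pi^{-1}(U_i)$ both $j$ and $j'$ are Lagrangian embeddings with identical boundary behavior. Both embeddings are smoothings of the same singular exact Lagrangian $A^+$ consisting of the annulus together with the Legendrian disk $D_i$ of \S\ref{subsec:extended-leg} attached along its core, via the two distinct phases of the disk move depicted in the text.

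\textbf{Step 4 (Focus-focus isotopy).} In a local symplectic chart near the pinch point of $A^+$ the situation is modeled by the Lefschetz family $\{pq = \epsilon\}\subset \bC^2$, with $\epsilon\in\bR_{>0}$ and $\epsilon\in\bR_{<0}$ picking out the two exact smoothings and $\epsilon=0$ giving the nodal Lagrangian $A^+$. Any path in the complex $\epsilon$-plane from one real smoothing to the other that avoids the origin gives a one-parameter family of smooth Lagrangian annulus embeddings; for $\epsilon\notin\bR$ these are necessarily non-exact. After multiplying by a bump function that damps the deformation to zero near $\partial U_i$, this local family glues to the fixed wavefront outside $U_i$ and produces the desired global Lagrangian isotopy.

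\textbf{Step 5 (Double point count).} Outside $U_i$ the immersion is constant throughout, and inside $U_i$ it remains an embedding at every time. Therefore the self-intersections at every stage are exactly the $g+3$ original double points in the face interiors, which are in natural bijection with the faces of $\Gamma$, equivalently of $\Gamma'$.

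\textbf{Main obstacle.} The substantive step is Step 4: one must check that the local Lefschetz normal form can be arranged to match the hyperelliptic wavefront on the boundary of $U_i$, so that a complex path in the $\epsilon$-plane lifts to a genuine real Lagrangian isotopy rel boundary. Identifying the two real smoothings as living on opposite sides of the singular fiber (rather than in the same component of the space of smoothings) is what forces the isotopy to leave the exact locus, explaining the parenthetical ``non-exact'' in the statement.
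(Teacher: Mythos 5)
Your overall strategy — reduce to a single edge move, localize to a neighborhood $U_i$ of the affected edge, and connect the two local exact annuli through a (non-real) path of focus-focus fibers — is exactly the argument the paper is gesturing at with the one sentence that precedes the proposition, and Steps~1, 2, 3, and~5 are sound. One point in Step~4 needs correcting, though: $A^+$ is \emph{not} the $\epsilon=0$ fiber of the Lefschetz/focus-focus family. The nodal fiber $\{pq=0\}$ is a wedge of two Lagrangian disks meeting at a single point, while $A^+$ is the Lagrangian projection of the annulus \emph{together with} the Legendrian disk $D_i$ — a $2$-complex with a singular circle along which three sheets meet (half of the annulus, the other half, and the disk). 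These are genuinely different degenerations: the nodal fiber is the Hausdorff limit of the annuli as $\epsilon\to 0$, whereas $A^+$ is the annulus with its Lefschetz thimble attached and never appears as a member of the $\epsilon$-family at all. Your sentence ``$\epsilon=0$ giving the nodal Lagrangian $A^+$'' conflates them, and ``pinch point of $A^+$'' is a misnomer since $A^+$ has a singular circle, not a singular point.

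This conflation does not break the argument, because you never actually use $A^+$ as a member of the isotopy — the path in $\bC^*$ from $\epsilon>0$ to $\epsilon<0$ avoids the origin, and the two endpoints $A$ and $A'$ are what matter. But it would be cleaner to say: the two embedded exact annuli $j(A)$ and $j'(A')$ each bound a Lagrangian (Lefschetz) thimble, and the union of either annulus with its thimble is $A^+$; these annuli sit as the $\epsilon>0$ and $\epsilon<0$ fibers of a focus-focus model whose critical fiber $\{pq=0\}$ is a different (nodal) degeneration, and a path of $\epsilon$ through $\bC\setminus\bR$ interpolates between them through non-exact embedded annuli. You also correctly flag in ``Main obstacle'' that the only real technical content is matching the local model to the wavefront on $\partial U_i$ and damping the boundary motion by a Hamiltonian isotopy supported near $\partial U_i$; that is the step the paper leaves implicit, and a careful write-up would have to supply it.
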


In particular the Legendrians $S$ must have the same classical invariants.

\subsection{The Ekholm-Honda-K\'alm\'an Lagrangians}
\label{subsec:EHK}
Let $n$ be an integer, which for simplicity we will assume is odd.  Let $\Lambda \subset S^3$ be a Legendrian $(2,n)$-torus knot.  Two families of $\frac{1}{n+1} \binom{2n}{n}$ exact Lagrangian fillings of $\Lambda$ have been produced, first in \cite{EHK} and later in \cite{STWZ}.  Roughly speaking, \cite{EHK} produces this family of fillings by a ``elementary Lagrangian cobordism'' technique, and \cite{STWZ} by an ``alternating strand diagram'' technique.  We sketch here a third construction of a family of $\frac{1}{n+1}\binom{2n}{n}$ fillings, in terms of hyperelliptic wavefronts, and an equivalence between these fillings and the EHK fillings.

\subsubsection{Rainbow, spirograph and Ng projection}
As in \cite{STWZ} it is convenient to draw the $(2,n)$-torus knot in its ``spirograph projection,'' with $n+2$ crossings around a circle --- let us recall the correspondence here.  Let $\Lambda_n$ be the Legendrian $(2,n)$ torus knot.  There are two contactomorphisms
\[
{\qquad T^{\infty,-} \bR^2} \hookrightarrow S^3  \qquad \qquad \qquad \qquad T^{\infty} \bR^2 \hookrightarrow S^3,
\]
onto open subsets of $S^3$, and $\Lambda_n$ is contained in the image of both.  The front projection to $\bR^2$ has $n$ crossings and $4$ cusps in the first projection, and $n+2$ crossing but no cusps in the second projection.  For example when $n = 3$:
\begin{center}
\includegraphics[scale = .25]{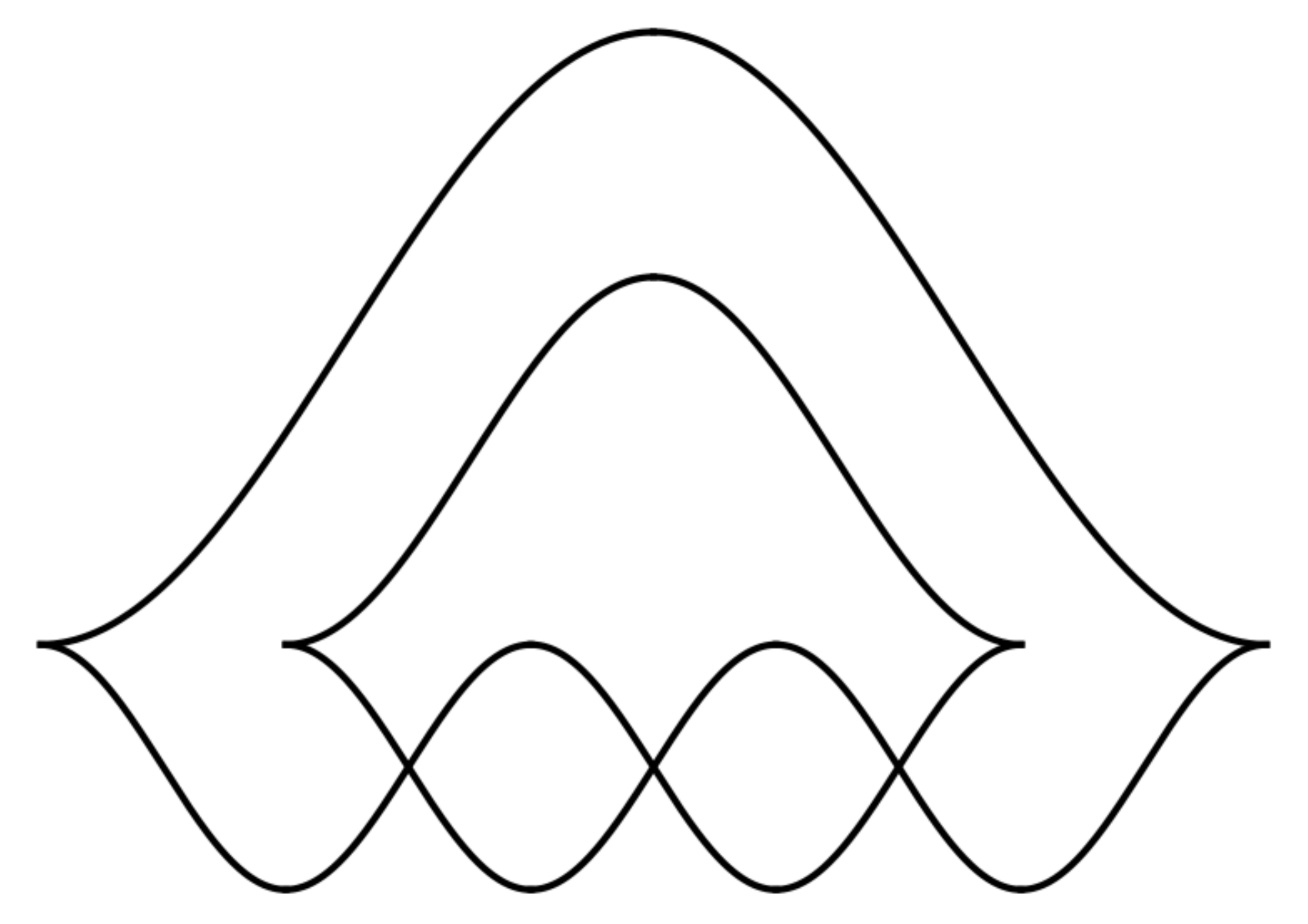}
\qquad \quad
\includegraphics[scale = .275]{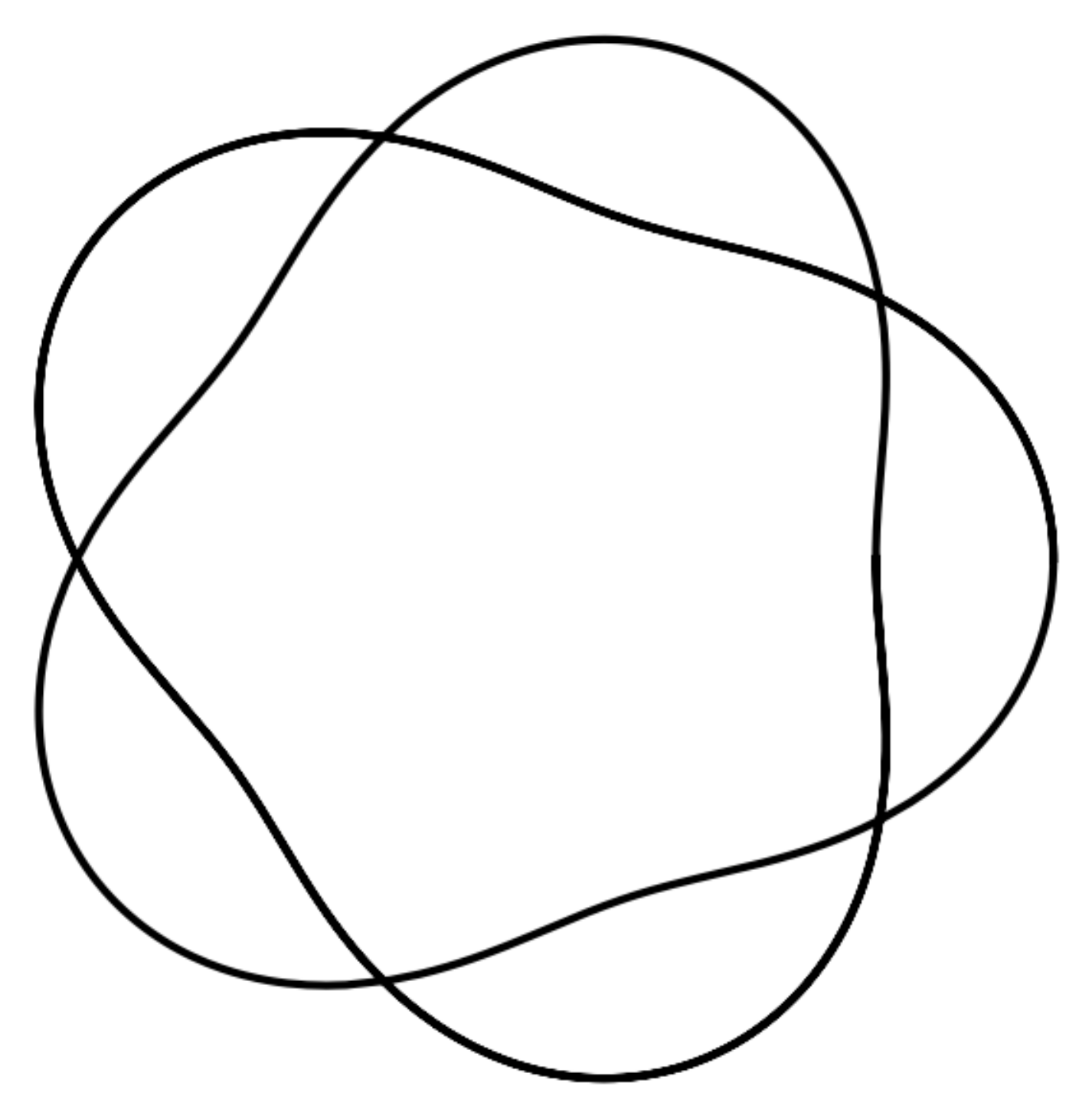}
\end{center}
The rainbow projection is Reidemeister-equivalent to the following reflection-invariant front, which we will call the \emph{Ng projection}:
\begin{center}
\includegraphics[scale = .33,angle = 180]{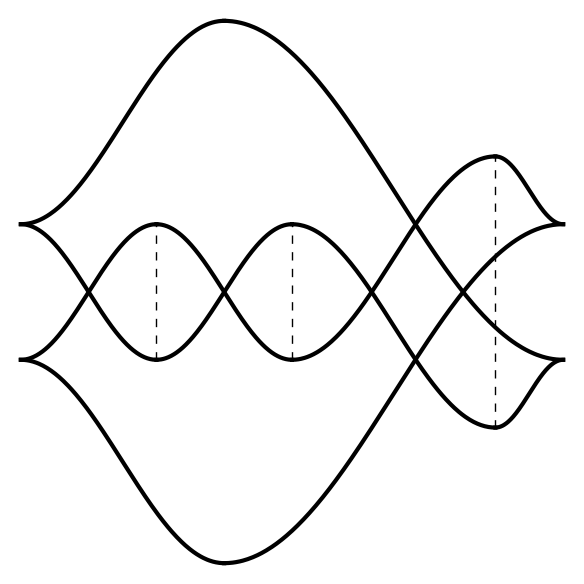}
\end{center}
This front is homeomorphic to one whose Lagrangian projection is \cite[Fig. 2, p. 5]{EHK} The dashed lines indicate the Reeb chords that loc. cit. uses to construct saddle cobordisms.

\subsubsection{Construction of fillings}

We replace $S^2$ with a disk $D^2$, and $\Gamma$ with a trivalent tree with $n$ vertices, whose $n+2$ leaves reach the boundary of the disk at those $n+2$ crossings --- we further assume that in a collar neighborhood of $\partial D^2$, each leaf is a radial interval.
A branched double cover $\leg$ of $D^2$ over the vertices of the tree is a genus $\frac{1}{2}(n-1)$ surface with one boundary component.  To give a hyperelliptic wavefront of $S$ into $D^2 \times \bR$, we give a function $f:S \to \bR$ analogous to the logarithm function $r$ of Definition \ref{def:hyperelliptic-wavefront}.  We replace condition \ref{def:hyperelliptic-wavefront}(5) by the condition that $f$ has no critical points besides the vertices of $\Gamma$, and that in a collar neighborhood of $D^2$ it is linear in the radial coordinate of $D^2$.  These conditions ensure that the Lagrangian projection --- i.e. the map $S \to T^* D^2$ given by $(\pi,\nabla f)$, as in Remark \ref{rem:polar-bear}\eqref{third} --- is an embedding, with Legendrian boundary at $T^* D^2\vert_{\partial D^2}$.

\begin{center}
\begin{figure}[H]
\includegraphics[scale = .25, angle = 270]{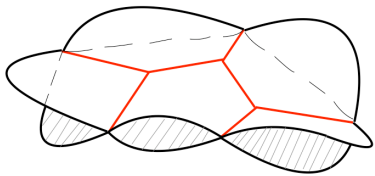}
\caption{A tree graph $\Gamma$ in red, and wavefront for an exact Lagrangian filling of a Legendrian trefoil knot.}
\label{fig:Gammafill}
\end{figure}
\end{center}

\subsubsection{Comparison to the Ekholm-Honda-K\'alm\'an construction}
\label{subsubsec:223}
Ekholm-Honda-K\'alm\'an observed that a Reeb chord in the Ng projection of the $(2,n)$ torus knot determines a ``saddle cobordism'' \cite[\S 6.5]{EHK} to the $(2,n-1)$ torus link, with one less Reeb chord in its Ng projection.  A total ordering of the Reeb chords gives a cobordism to the $(2,1)$ torus knot, which is a Legendrian unknot and has a unique disk filling.  They observed that many of these cobordisms were Hamiltonian isotopic to each other, making at most $\frac{1}{n+1}\binom{2n}{n}$ Hamiltonian isotopy classes.\footnote{Furthermore, they showed that the $\bF_2$ augmentation provided an invariant that showed they make at least $\frac{1}{3}(2^{n+1} - 1)$ Hamiltonian isotopy classes.  In \cite{STWZ} we used constructible sheaves to give an invariant distinguishing the Catalan-numbers-worth of fillings constructed there.}

A similar factorization is visible in the pictures of trees.  A planar tree is dual to an ideal triangulation of the disk with $n+2$ vertices.  Those vertices are in natural bijection with the Reeb chords of the spirograph projection.  Ordering the vertices determines a labeled, ideal triangulation of the disk with $n$ triangles, by slicing off triangles in order:
\begin{center}
\begin{tikzpicture}
\draw[ultra thick] (1.00, 0.000)--(0.309, 0.951)--(-0.809, 0.588)--(-0.809, -0.588)--(0.309, -0.951)--(1,0);
\node[right] at (1,0) {$1$};
\node[left] at (-0.809, 0.588) {$2$};
\node[left] at (-0.809, -0.588) {$3$};
\node[below] at (0.309, -0.951) {$4$};
\node[above] at (0.309, 0.951) {$5$};

\draw[ultra thick] (4+1.00, 0.000)--(4+0.309, 0.951)--(4-0.809, 0.588)--(4-0.809, -0.588)--(4+0.309, -0.951)--(4+1,0);
\draw[thick] (4+0.309, 0.951)--(4+0.309, -0.951);
\node[right] at (4+1,0) {$1$};
\node[left] at (4-0.809, 0.588) {$2$};
\node[left] at (4-0.809, -0.588) {$3$};
\node[below] at (4+0.309, -0.951) {$4$};
\node[above] at (4+0.309, 0.951) {$5$};

\draw[ultra thick] (8+1.00, 0.000)--(8+0.309, 0.951)--(8-0.809, 0.588)--(8-0.809, -0.588)--(8+0.309, -0.951)--(8+1,0);
\draw[thick] (8+0.309, 0.951)--(8+0.309, -0.951);
\draw[thick] (8+0.309, 0.951)--(8-0.809, -0.588);
\node[right] at (8+1,0) {$1$};
\node[left] at (8-0.809, 0.588) {$2$};
\node[left] at (8-0.809, -0.588) {$3$};
\node[below] at (8+0.309, -0.951) {$4$};
\node[above] at (8+0.309, 0.951) {$5$};

\node[red] at (8+.6,0) {$1$};
\node[red] at (8-.05,-.1) {$3$};
\node[red] at (7.5,.35) {$2$};

\end{tikzpicture}
\end{center}
The labeled triangulation does not depend on the ordering of vertices $n$, $n+1,$ $n+2.$
Write $a_k$ for the triangle labeled by $k$.
Then $a_n,\dots,a_1$ makes a shelling of the triangulated $(n+2)$-gon in the sense of \cite{BM}.  
In particular, the union $\bigcup_{k=1}^n a_k$ makes a triangulated $(n-k+2)$-gon, dual to a planar trivalent tree,
and the hyperelliptic wavefront associated to that tree gives a filling of a Legendrian $(2,n-k)$ torus knot or link.

\begin{remark}
\label{rmk:ehk}
The filling we construct only depends on the tree/triangulation, not on the labeling/shelling.
We gave an iterative description to highlight the connection to \cite{EHK}.
\end{remark}

\section{Foams and fillings}
\label{sec:foam}

In some sense we regard the hyperelliptic Legendrians of this paper as two-dimensional analogues of two-strand torus knots.  In this section we show that the format of \S\ref{subsec:EHK}, where planar trivalent trees give a blueprint for describing the Lagrangian surfaces that fill such knots, can be adapted to describe three-dimensional Lagrangians that fill hyperelliptic Legendrians.  The role of trivalent trees is played by foams:\footnote{Many related constructions
appear in the works described in Section \ref{sec:vafaetal} --- see, in particular, Section 5.1.2 of \cite{CCV} and Section 3.1 of \cite{CEHRV} ---
though not foams \emph{per se} or an emphasis on Legendrian boundaries.}

\begin{definition}
Let $D^3$ be the 3-dimensional ball.  A \emph{foam} in $D^3$ is a stratified subset $\bF^{2} \subset \bF^{1} \subset \bF \subset D^3$ where
\begin{enumerate}
\item As subsets of $D^3$, $\bF^{2}$ is a finite set of points, $\bF^{1} - \bF^2$ is a finite set of smoothly embedded arcs and $\bF - \bF^1$ is a finite set of smoothly embedded surfaces.
\item Near each point $\bF^2$, we may find smooth coordinates on $D^3$ identifying $\bF$ with the cone over the $1$-skeleton of a tetrahedron.
\item Each connected component of $\bF^1 - \bF^2$ has a neighborhood that looks like the cone on the vertices of a triangle, times an interval.
\end{enumerate}
We furthermore assume that $\bF$ is conical in a collar neighborhood $N$ of $\partial D^3$ --- it looks like the preimage of a cubic planar graph under the radial projection $N \to \partial D^3$.
\end{definition}

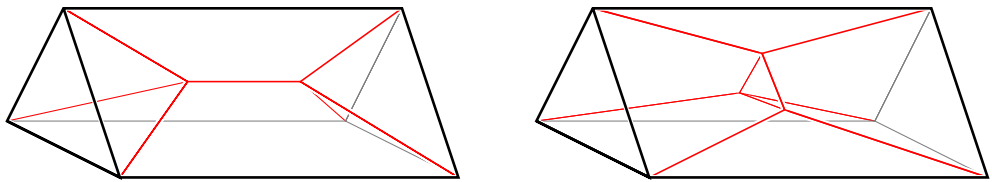
\begin{figure}[H]
\begin{center}
\begin{tikzpicture}[scale=1.5]
\tikzset{%
  channel/.style    = { white, double = black, line width = 1.2pt,
                     double distance = 0.4pt },
  link/.style = { white, double = black, line width = 0.4pt,
                     double distance = 0.6pt },}
%  rline1/.style = { white, double = red, line width = 0.8pt,
 %                    double distance = 0.6pt },}               
\draw[channel,double=red] (1.6,.35) -- (0,0);
\draw[channel,double=red] (2.6,.35) -- (3,0);
\draw[gray] (0,0) -- (3,0) -- (3.5,1) -- (3,0) -- (4,-.5);
\draw[link,double=red, line width = .8pt] (1.6,.35) -- (.5,1) -- (1.6,.35) -- (1,-.5) -- (1.6,.35) -- (2.6,.35) -- (4,-.5) -- (2.6,.35) -- (3.5,1);
\draw[link,double distance = 1pt] (0,0) -- (1,-.5) -- (4,-.5) -- (3.5,1) -- (.5,1) -- (0,0) -- (1,-.5) -- (.5,1);
\draw[white, thick] (.9,-.534) -- (1.1,-.534);
\end{tikzpicture}
\qquad
\begin{tikzpicture}[scale=1.5]
\tikzset{%
  channel/.style    = { white, double = black, line width = .8pt,
                     double distance = 0.4pt },
  link/.style = { white, double = black, line width = 0.4pt,
                     double distance = 0.6pt },}
%  rline1/.style = { white, double = red, line width = 0.8pt,
 %                    double distance = 0.6pt },}
                     
\draw[channel,double=red] (0,0) -- (1.8,.25) -- (3,0);
\draw[channel,double=red] (2,.6) -- (1.8,.25) -- (2.2,.1);
\draw[red,line width = .3pt] (0,0) -- (1.8,.25) -- (3,0);
\draw[red,line width = .3pt] (2,.6) -- (1.8,.25) -- (2.2,.1);
%
%\draw[channel,double=red] (2.6,.35) -- (3,0);
\draw[gray] (0,0) -- (3,0) -- (3.5,1) -- (3,0) -- (4,-.5);
\draw[link,double=red, line width = .8pt] (.5,1) -- (2,.6) -- (2.2,.1) -- (4,-.5);
\draw[link,double=red, line width = .8pt] (1,-.5) -- (2.2,.1) -- (2,.6) -- (3.5,1);
\draw[red, line width = .6pt] (4,-.5) -- (2.2,.1) -- (2,.6) -- (.5,1);
\draw[link,double distance = 1pt] (0,0) -- (1,-.5) -- (4,-.5) -- (3.5,1) -- (.5,1) -- (0,0) -- (1,-.5) -- (.5,1);
\draw[white, thick] (.9,-.534) -- (1.1,-.534);
\end{tikzpicture}
\end{center}
\caption{Two examples of a foam filling a triangular prism graph.}
\label{fig:two-foams}
\end{figure}

The conditions of the definition are topological analogues of the Plateau conditions for soap films.  (The genuine Plateau conditions are metric --- soap films have constant mean curvature away from the singularities, where the smooth sheets meet at equilateral angles.)  A foam gives a regular cell complex structure on $D^3$, whose dual complex is a ``tetrahedronation'' of $D^3$ --- this is similar to the duality between planar trivalent graphs and triangulations.  We will say that a foam is \emph{ideal} if every connected component of $D^3 - \bF$ contains part of $\partial D^3$ --- then the dual tetrahedronization will have no internal vertices; this is analogous to the cubic planar graph being a cubic planar tree.

\subsection{The Harvey-Lawson cone}

\label{subsec:HL-cone}
The building-block example of a foam, is the cone over the $1$-skeleton of a regular tetrahedron centered at the origin.  Before turning to our general construction let us explain the relationship between this foam and the \emph{Harvey-Lawson cone}.  This is the Lagrangian subset $\mathit{HL} \subset \bC^3$ given parametrically by
\[
(r,e^{is},e^{it}) \mapsto (re^{is},re^{it},re^{-is-it})
\]
where $r\geq 0$ and $(s,t) \in S^1 \times S^1.$
It is a cone over a two-torus, with a conic singularity at the origin $r = 0$.  
Coordinatize $\bC^3$ as $z = x + iy,$ with $z = (z_1,z_2,z_3)$ and likewise for $x$ and $y$.
Then $\mathit{HL}$ is special Lagrangian with respect to the standard Calabi-Yau structure.
Define $n = \frac{x}{|x|} \in S^2$ and $f = n \cdot y \in \bR$
The restriction of ${\mathit HL}$ to fixed $r$ is Legendrian, with wavefront projection
$$(s,t) \mapsto e^f n \in \bR^3 \setminus \{0\} \cong S^2 \times \bR$$
shown in Figure \ref{fig:cloudytet}.  The Harvey-Lawson cone is therefore a singular Lagrangian
filling of the Legendrian surface associated to the tetrahedron.  (It is also exact and special.)

We will be interested here in a different projection, to the real three-space: $$\mathit{HL} \to \bR^3_x, \qquad (r,s,t) \mapsto (r\cos(s),r\cos(t),r\cos(-s-t)).$$
This is a double cover branched over the four rays 
\begin{equation}
\label{eq:those-rays}
\{(a,a,a) \mid a >0\}, \{(a,-a,-a) \mid a >0\}, \{(-a,-a,a) \mid a > 0\}, \{(-a,a,-a) \mid a > 0\}
\end{equation}
Note the generator for the $\bZ/2$ Galois group of this covering is given explicitly by $(r,s,t) \mapsto (r,-s,-t)$.  The unusually low dimension of the critical value locus --- here it has codimension $2$ --- is related to the fact that both $\mathit{HL}$ and the kernel of the projection $\bC^3 \to \bR^3$ are special Lagrangian of the same phase. 

The rays \eqref{eq:those-rays} are generated by the vertices of a regular tetrahedron.  The foam whose walls are the sectors through these edges play a role in the \emph{exactness} of $\mathit{HL}$.  
Indeed being a cone, the set $\mathit{HL}$ is contractible and therefore an exact Lagrangian with respect to any primitive $\alpha$ for $\omega$.  It is natural to take for $\alpha$ the canonical one-form obtained when identifying $\bC^3$ with $T^* \bR^3$.  Then we compute 
$
\alpha\vert_{\mathit{HL}} = df 
$
where 
\begin{equation}
\label{eq:HL-primitive}
f = \frac{1}{4} r^2 (\sin(2s) + \sin(2t) - \sin(2s+2t))
\end{equation}
Note that $f = 0$ precisely when $(x_1,x_2,x_3)$ are in walls of the tetrahedral foam.

\begin{remark}
\label{rem:the-function-f}
It is possible to describe $\mathit{HL}$ as the graph of a double-valued one-form, indeed as the graph of $df$ where 
\[
f = \pm \frac{1}{2}\sum_{i = 1}^3 \epsilon_i x_i \sqrt{r^2 - x_i^2}
\] 
where
\begin{itemize}
\item $\epsilon_i = \epsilon_i(x_1,x_2,x_3) \in \{-1,1\}$ is a sign that depends on which chamber of the foam $(x_1,x_2,x_3)$ belongs to.  (Thus, let $\sqrt{r^2 -x_i^2}$ always denote the positive square root, and let this $\epsilon$ and the $\pm$ in front of the summation sign do the work recording the sign ambiguity of $f$).
\item $r$ is the largest solution to 
\[
r^3 - (x_1^2 + x_2^2 + x_3^2) r + 2x_1x_2x_3 = 0
\]
Note for all $(x_1,x_2,x_3) \in \bR^3$ all roots of that cubic equation are real --- the largest root is also the unique solution with $r \geq \max(|x_1|,|x_2|,|x_3|)$, and it has multiplicity $2$ exactly along the rays \eqref{eq:those-rays}.
\end{itemize}
\end{remark}

\subsection{Singular exact fillings}
\label{subsec:singular-exact-fillings}

Let $\Gamma \subset S^2$ be a cubic planar graph and let $\bF \subset D^3$ be a foam whose boundary is $\Gamma$.  Let $\pi:L(\bF) \to D^3$ denote the connected double cover branched over the $1$-skeleton of $\bF$, and write $\iota_{L(\bF)}$ for its nontrivial Deck transformation.  Then $L(\bF)$ is a manifold away from the vertices of $\bF$ --- near each vertex it is diffeomorphic to the Harvey-Lawson cone.  The boundary of $L(\bF)$ is naturally identified with the hyperelliptic surface $S$ branched over the vertices of $\Gamma$.

We choose a function $z:L(\bF) \to \bR$ that is smooth away from the cone points of $L(\bF)$, that is odd for the Deck transformation of $\pi$ (i.e. $z \circ \iota = -z$) and that has the following additional properties:
\begin{enumerate}
\item In a neighborhood $U$ of each cone point, there are coordinates on $\pi^{-1}(U)$ such that $z\vert_U$ looks like \eqref{eq:HL-primitive} 
\item The only critical points of $z$ are on $\pi^{-1}(\bF^1)$, i.e. on the critical points of $\pi$.  Furthermore, in a collar neighborhood of $\partial D^3$, $z$ is linear along each ray.
\end{enumerate}

Then (just as in \S\ref{subsec:EHK}, but one dimension up) we obtain an exact Lagrangian in $T^* D^3$ by taking $(\pi,\nabla z)$ whose restriction to $T^* D^3\vert_{\partial D^3} \cong (T^* S^2) \times \bR$ is Legendrian.  For example, the function $f$ of Remark \ref{rem:the-function-f} has almost all of these properties, except that it is quadratic instead of linear along each ray.  We obtain a function $z$ by damping $f$ with an exponent that smoothly interpolates between $1$ and $1/2$.

\begin{remark}
Note that item (1) of the conditions on $z$ is much stronger than the analogous item (4) of Definition \ref{def:hyperelliptic-wavefront}.  In particular it is only possible when there are coordinates near the vertex of $\bF$ that ``linearize'' the foam.  We suspect that it is possible to assume $\bF$ is in such a form without affecting the Hamiltonian isotopy class of the exact Lagrangian $L(\bF)$, but it would be desirable to relax this condition for other reasons.
\end{remark}

\begin{remark}
\label{rem:Sigma-A}
In the case of the Harvey-Lawson cone, this construction yields what Nadler denotes by ``$\Sigma \subset A$'' in the proof of \cite[Thm 3.2]{N-cone} --- $A$ is a copy of $\bR^3$
and $\Sigma$ is the hyperelliptic wavefront of Figure \ref{fig:cloudytet}.
\end{remark}

\subsection{Cobordisms}
\label{sec:cobordisms}
An exact filling of $\leg$ is an exact Lagrangian cobordism from $\leg$ to the empty Legendrian.  One can ask whether these cobordisms can be factored into simpler cobordisms --- as we discussed in \S\ref{subsec:EHK}, this is how Ekholm, Honda, and K\'alm\'an discovered their Lagrangians.  In this section we indicate a (somewhat) analogous description of the singular exact fillings of \S\ref{subsec:singular-exact-fillings}, in particular by rough analogy with the discussion of \S\ref{subsubsec:223}, but the analogy highlights some differences.

Suppose that $\Gamma$ has no self-loops or multiple edges, so its planar dual graph $\hat{\Gamma}$ is a triangulation of $S^2$.  Just as a cubic planar tree is dual to an ideal triangulation of a disk, an ideal foam is dual to an ideal ``tetahedronation'' of the ball $D^3$ that restricts to $\hat{\Gamma}$ on $\partial D^3 = S^2$.  Thus if $\bF$ is a foam filling of $\Gamma$, let $\hat{\bF}$ be the dual tetrahedronation restricting to $\hat{\Gamma}$ on the boundary.  If $\bF$ is an ideal foam, $\hat{\bF}$ is an ideal triangulation: it has no vertices besides those on $\partial D^3$.  For simplicity, we assume $\hat{\bF}$ is shellable, but as in the two-dimensional case (see Remark \ref{rmk:ehk}) this assumption is
unnecessary and $L(\bF)$ depends only on the foam $\bF$.

In \S\ref{subsubsec:223} we noted that a shelling of the triangles in the dual triangulation of a disk gives an ``EHK factorization'' of the exact Lagrangian surfaces into elementary cobordisms.  Thus, let $n$ be the number of tetrahedra in $\hat{\bF}$, equivalently the number of interior vertices in $\bF$, and let $a_n,a_{n-1},\ldots, a_1$ be a shelling of the tetrahedra in $\hat{\bF}$.  (We use descending indices to better match the notation of \S\ref{subsubsec:223}; the highest index is the outer shell.)  For each $k,$ the union of $a_n,\ldots,a_{n-k}$ is another polyhedron with a tetrahedronation --- it is dual to a foam filling $\bF(k)$ with boundary a cubic planar graph $\Gamma(k)$.  Write $L(\bF(k))$ for the
singular exact Lagrangian double cover branched over the one-skeleton of $\bF(k)$.  We will indicate how $L(\bF(k+1))$ is obtained from $L(\bF(k))$ and a Harvey-Lawson-type Lagrangian $\mathit{HL}_{k+1}$ by gluing $L(\bF(k))$ and $\mathit{HL}_{k+1}$ along parts of their boundaries.

For $\mathit{HL}_{k+1}$, we take the branched double cover of the one-skeleton of the foam associated with a small tetrahedron with one vertex on each face of $a_{n-(k+1)}$.  There are two cases to consider: the tetrahedron $a_{n-(k+1)}$ is incident with $\bigcup_{i = 1}^k a_{n-i}$ along either exactly one or exactly two of its faces.  (Since we assume $\hat{\bF}$ has no internal vertices, it is impossible for $a_{n-(k+1)}$ to be indicent with the previous simplices along three of its faces.)  

If it is incident along exactly one face, the left-hand figure illustrates $L(\bF(k)) \cup \mathit{HL}_{k+1}$ before the gluing, and the right-hand figure illustrates it after the gluing:
%%{\color{red} $L(\bF(k+1))$ is }
%
%\blue{We proceed by induction on the number of shells...}
%
%{\color{red} In the interior of each $a_{n-k}$, we place a small sphere.  The vertices and edges of $a_{T-k}$ can be projected radially onto the sphere, decomposing it into 4 vertices, 4 faces, and 6  edges.  We let $\Gamma_{T-k}$ denote the dual of this cell decomposition, also with 4 vertices, 4 faces, and 6 edges.  Following the recipe in Remark \ref{rem:polar-bear}\eqref{fourth} (with $M = \bR^3$, the union of these $T$ small spheres as $\Sigma$, and the union of the $\Gamma_{T-k}$ as the cubic graph) we obtain a wavefront $(S^1 \times S^1)^{\amalg T} \to \bR^3$ with $T$ connected components, one inside of each tetrahedron $a_{T-k}$.  Each connected component is a copy of the example of Figure \ref{fig:cloudytet}.}
%
%{\color{red} Next, we construct an exact Lagrangian cobordism from the Legendrian lift of this }
%
%
%Inside of $a_T$, we place a copy of the wavefront defined by the dual tetrahedron whose vertices are at the barycenters of the faces of $a_T$,
%as seen in \red{red} in Figure \ref{fig:cloudytet}.  We place a similar copy in $a_{T-1}$.  This is schematically indicated at left in the following figure:
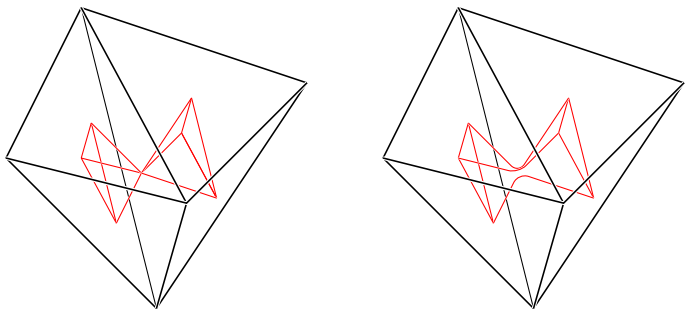
\begin{figure}[H]
\begin{tikzpicture}[scale=2]
\tikzset{%
  channel/.style    = { white, double = black, line width = .2pt,
                     double distance = 0.4pt },
  link/.style = { white, double = black, line width = 0.4pt,
                     double distance = 0.6pt },}
 \pgfmathsetmacro{\Ax}{0}
 \pgfmathsetmacro{\Ay}{0}
 \pgfmathsetmacro{\Bx}{1}
 \pgfmathsetmacro{\By}{-1}
 \pgfmathsetmacro{\Cx}{2}
 \pgfmathsetmacro{\Cy}{.5}
 \pgfmathsetmacro{\Dx}{.5}
 \pgfmathsetmacro{\Dy}{1}
 \pgfmathsetmacro{\Ex}{1.2}
 \pgfmathsetmacro{\Ey}{-.3}
 \pgfmathsetmacro{\ADEx}{ (1/3)*(\Ax+\Dx+\Ex)}
 \pgfmathsetmacro{\ADEy}{ (1/3)*(\Ay+\Dy+\Ey)}
 \pgfmathsetmacro{\ABEx}{(1/3)*(\Ax+\Bx+\Ex)}
 \pgfmathsetmacro{\ABEy}{(1/3)*(\Ay+\By+\Ey)}
 \pgfmathsetmacro{\ABDx}{(1/3)*(\Ax+\Bx+\Dx)}
 \pgfmathsetmacro{\ABDy}{(1/3)*(\Ay+\By+\Dy)}
 \pgfmathsetmacro{\BDEx}{(1/3)*(\Bx+\Dx+\Ex)}
 \pgfmathsetmacro{\BDEy}{(1/3)*(\By+\Dy+\Ey)}
 \pgfmathsetmacro{\BCDx}{(1/3)*(\Bx+\Cx+\Dx)}
 \pgfmathsetmacro{\BCDy}{(1/3)*(\By+\Cy+\Dy)}
 \pgfmathsetmacro{\BCEx}{(1/3)*(\Bx+\Cx+\Ex)}
 \pgfmathsetmacro{\BCEy}{(1/3)*(\By+\Cy+\Ey)}
 \pgfmathsetmacro{\CDEx}{(1/3)*(\Cx+\Dx+\Ex)}
 \pgfmathsetmacro{\CDEy}{(1/3)*(\Cy+\Dy+\Ey)}
 \pgfmathsetmacro{\thAx}{.5*\Ax}
 \def\Ay{0}
 \pgfmathsetmacro{\thAx}{\Ax/3}
 \coordinate (A) at (\Ax,\Ay);
 \coordinate (B) at (\Bx,\By);
 \coordinate (C) at (\Cx,\Cy);
 \coordinate (D) at (\Dx,\Dy);
 \coordinate (E) at (\Ex,\Ey);
 \coordinate (ADE) at (\ADEx,\ADEy);
 \coordinate (ABE) at (\ABEx,\ABEy);
 \coordinate (ABD) at (\ABDx,\ABDy);
 \coordinate (BDE) at (\BDEx,\BDEy);
 \coordinate (BCD) at (\BCDx,\BCDy);
 \coordinate (BCE) at (\BCEx,\BCEy);
 \coordinate (CDE) at (\CDEx,\CDEy);
 \draw[black,line width = .4pt] (B)--(D);
 \draw[red] (ADE)--(ABE);
 \draw[red] (ABD)--(ABD);
\draw[channel,double=red]  (ABD) --(BDE);
\draw[channel,double=red] (BDE)--(ADE);
\draw[red] (ADE)--(ABD);
\draw[red] (ABD)--(ABE);
\draw[channel,double = red]  (ABE)--(BDE);
 \draw[red] (CDE)--(BCE);
 \draw[red]  (BCE)--(BCD);
 \draw[channel,double=red] (BCD)--(BDE);
 \draw[channel,double=red] (BDE)--(CDE);
 \draw[red] (CDE)--(BCD);
 \draw[red] (BCD)--(BCE);
 \draw[channel,double=red] (BCE)--(BDE);
 \draw[black,line width = .6pt] (A)--(B);
 \draw[black,line width = .6pt] (A)--(D);
 \draw[black,line width = .6pt] (A)--(E);
 \draw[black,line width = .6pt] (B)--(C);
 \draw[black,line width = .6pt] (C)--(D);
 \draw[black,line width = .6pt] (C)--(E);
 \draw[link] (B)--(E)--(D);
 \draw[link] (A)--(E)--(C);
 \end{tikzpicture}
 \qquad
\begin{tikzpicture}[scale=2]
 node/.style={very thin}
\tikzset{%
  channel/.style    = { white, double = black, line width = .2pt,
                     double distance = 0.4pt },
  link/.style = { white, double = black, line width = 0.4pt,
                     double distance = 0.6pt },}
 \pgfmathsetmacro{\Ax}{0}
 \pgfmathsetmacro{\Ay}{0}
 \pgfmathsetmacro{\Bx}{1}
 \pgfmathsetmacro{\By}{-1}
 \pgfmathsetmacro{\Cx}{2}
 \pgfmathsetmacro{\Cy}{.5}
 \pgfmathsetmacro{\Dx}{.5}
 \pgfmathsetmacro{\Dy}{1}
 \pgfmathsetmacro{\Ex}{1.2}
 \pgfmathsetmacro{\Ey}{-.3}
 \pgfmathsetmacro{\ADEx}{ (1/3)*(\Ax+\Dx+\Ex)}
 \pgfmathsetmacro{\ADEy}{ (1/3)*(\Ay+\Dy+\Ey)}
 \pgfmathsetmacro{\ABEx}{(1/3)*(\Ax+\Bx+\Ex)}
 \pgfmathsetmacro{\ABEy}{(1/3)*(\Ay+\By+\Ey)}
 \pgfmathsetmacro{\ABDx}{(1/3)*(\Ax+\Bx+\Dx)}
 \pgfmathsetmacro{\ABDy}{(1/3)*(\Ay+\By+\Dy)}
 \pgfmathsetmacro{\BDEx}{(1/3)*(\Bx+\Dx+\Ex)}
 \pgfmathsetmacro{\BDEy}{(1/3)*(\By+\Dy+\Ey)}
 \pgfmathsetmacro{\BCDx}{(1/3)*(\Bx+\Cx+\Dx)}
 \pgfmathsetmacro{\BCDy}{(1/3)*(\By+\Cy+\Dy)}
 \pgfmathsetmacro{\BCEx}{(1/3)*(\Bx+\Cx+\Ex)}
 \pgfmathsetmacro{\BCEy}{(1/3)*(\By+\Cy+\Ey)}
 \pgfmathsetmacro{\CDEx}{(1/3)*(\Cx+\Dx+\Ex)}
 \pgfmathsetmacro{\CDEy}{(1/3)*(\Cy+\Dy+\Ey)}
 \pgfmathsetmacro{\thAx}{.5*\Ax}
 \def\Ay{0}
 \pgfmathsetmacro{\thAx}{\Ax/3}
 \coordinate (A) at (\Ax,\Ay);
 \coordinate (B) at (\Bx,\By);
 \coordinate (C) at (\Cx,\Cy);
 \coordinate (D) at (\Dx,\Dy);
 \coordinate (E) at (\Ex,\Ey);
 \coordinate (ADE) at (\ADEx,\ADEy);
 \coordinate (ABE) at (\ABEx,\ABEy);
 \coordinate (ABD) at (\ABDx,\ABDy);
 \coordinate (BDE) at (\BDEx,\BDEy);
 \coordinate (BCD) at (\BCDx,\BCDy);
 \coordinate (BCE) at (\BCEx,\BCEy);
 \coordinate (CDE) at (\CDEx,\CDEy);
 \draw[black,line width = .4pt] (B)--(D);
 \draw[red] (ADE)--(ABE);
 \draw[red] (ABE)--(ABD);
 \draw[red] (ABD)--(ADE);
 \draw[red] (CDE)--(BCE);
 \draw[red] (BCE)--(BCD);
 \draw[red] (BCD)--(CDE);
 \draw[channel,double=red,rounded corners] (ADE)--(BDE)--(CDE);
 \draw[channel,double=red,rounded corners] (ABE)--(BDE)--(BCE);
 \draw[channel,double=red,rounded corners] (ABD)--(BDE)--(BCD);
 \draw[black,line width = .6pt] (A)--(B);
 \draw[black,line width = .6pt] (A)--(D);
 \draw[black,line width = .6pt] (A)--(E);
 \draw[black,line width = .6pt] (B)--(C);
 \draw[black,line width = .6pt] (C)--(D);
 \draw[black,line width = .6pt] (C)--(E);
 \draw[link] (B)--(E)--(D);
 \draw[link] (A)--(E)--(C);
 \end{tikzpicture}
\caption{The black tetrahedra are $a_n$ and $a_{n-1}$.  On the left, one of the red tetrahedra is the boundary of $\bF(1)$ and one of them is the boundary of the foam associated with $\mathit{HL}_2$.
On the right, the red figure indicates the boundary of $\bF(2)$ obtained by gluing $\mathit{HL}_2$ to $\bF(1).$}
\label{fig:facecobordism}
\end{figure}
%More precisely, if $a_{n-(k+1)}$ is incident with the previous simplices along exactly one face, then $L(\bF(k+1))$ is obtained from $L(\bF(k))$ and $\mathit{HL}_{k+1}$ by removing a smal.  selecting a trivalent vertex of the wavefront of the boundary of $\mathit{HL}_{k+1}$, and of the wavefront of the boundary of $L(\bF(k))$, removing a neighborhood of this vertex.
\noindent More precisely, if $a_{n-(k+1)}$ is incident with the previous simplices along exactly one face, then $L(\bF(k+1))$ is obtained from $L(\bF(k))$ and $\mathit{HL}_{k+1}$ by
modifying the disconnected wavefront near the two vertices meeting at the interior face.
Specifically, remove from each wavefront a disk neighborhood of the relevant vertex and replace these disks by gluing in a cylinder, as in Figure \ref{fig:facecobordism}.

%\red{(caption should now be ``the big black tetrahedra are $a_n$ and $a_{n-1}$.  On the left, one of the red tetrahedra is $\bF(1)$ and one of them is $\mathit{HL}_2$.  On the right, the red figure indicates the boundary of $\bF(2)$, obtained by gluing $\mathit{HL}_2$ to $\bF(1)$.)}

%As the right-hand side of the figure suggests, \red{the singular exact Lagrangian associated to $\bF(k+1)$ }there is an exact cobordism (with a single singularity) between the wavefront associated to
%$\bF(k)\cup HL_{k+1}$ and that associated to $\bF(k+1)$.
%It can be composed with the Harvey-Lawson filling of $\bF(k)$ to obtain a singular exact Lagrangian filling of $\bF(k+1).$
%This procedure covers the scenario of a completely interior edge of {\bf F}, or interior face of the dual tetrahedronation $\hat{\bf F}$, and completes the story,
%for example, for the triangulation of Figure \ref{fig:facecobordism}.
%
%``2'' triangulation of the shape of Figure \ref{fig:facecobordism} with two tetrahedra as shown.

To understand the case of an interior face of ${\bf F}$, i.e.~edge of $\hat{\bf F}$, consider Figure \ref{fig:edgecobordism} below.
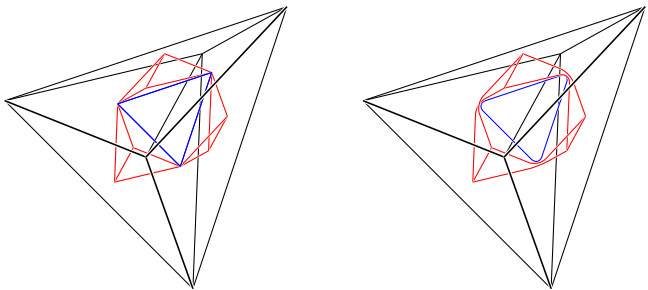
\begin{figure}[H]
\begin{tikzpicture}
 node/.style={very thin}
\tikzset{%
  channel/.style    = { white, double = black, line width = .2pt,
                     double distance = 0.4pt },
  link/.style = { white, double = black, line width = 0.4pt,
                     double distance = 0.6pt },}
 \pgfmathsetmacro{\Ax}{-2.5}
 \pgfmathsetmacro{\Ay}{0}
 \pgfmathsetmacro{\Bx}{0}
 \pgfmathsetmacro{\By}{-2.5}
 \pgfmathsetmacro{\Cx}{1.25}
 \pgfmathsetmacro{\Cy}{1.25}
 \pgfmathsetmacro{\Dx}{-.625}
 \pgfmathsetmacro{\Dy}{-.75}
 \pgfmathsetmacro{\Ex}{.125}
 \pgfmathsetmacro{\Ey}{.625}
 \pgfmathsetmacro{\ADEx}{ (1/3)*(\Ax+\Dx+\Ex)}
 \pgfmathsetmacro{\ADEy}{ (1/3)*(\Ay+\Dy+\Ey)}
 \pgfmathsetmacro{\ABEx}{(1/3)*(\Ax+\Bx+\Ex)}
 \pgfmathsetmacro{\ABEy}{(1/3)*(\Ay+\By+\Ey)}
 \pgfmathsetmacro{\ABDx}{(1/3)*(\Ax+\Bx+\Dx)}
 \pgfmathsetmacro{\ABDy}{(1/3)*(\Ay+\By+\Dy)}
 \pgfmathsetmacro{\BDEx}{(1/3)*(\Bx+\Dx+\Ex)}
 \pgfmathsetmacro{\BDEy}{(1/3)*(\By+\Dy+\Ey)}
 \pgfmathsetmacro{\BCDx}{(1/3)*(\Bx+\Cx+\Dx)}
 \pgfmathsetmacro{\BCDy}{(1/3)*(\By+\Cy+\Dy)}
 \pgfmathsetmacro{\BCEx}{(1/3)*(\Bx+\Cx+\Ex)}
 \pgfmathsetmacro{\BCEy}{(1/3)*(\By+\Cy+\Ey)}
 \pgfmathsetmacro{\CDEx}{(1/3)*(\Cx+\Dx+\Ex)}
 \pgfmathsetmacro{\CDEy}{(1/3)*(\Cy+\Dy+\Ey)}
 \pgfmathsetmacro{\ABCx}{(1/3)*(\Ax+\Bx+\Cx)}
 \pgfmathsetmacro{\ABCy}{(1/3)*(\Ay+\By+\Cy)}
 \pgfmathsetmacro{\ACDx}{(1/3)*(\Ax+\Cx+\Dx)}
 \pgfmathsetmacro{\ACDy}{(1/3)*(\Ay+\Cy+\Dy)}
 \pgfmathsetmacro{\ACEx}{(1/3)*(\Ax+\Cx+\Ex)}
 \pgfmathsetmacro{\ACEy}{(1/3)*(\Ay+\Cy+\Ey)}
 \pgfmathsetmacro{\ADEBDEx}{ (1/2)*(\ADEx+\BDEx)}
 \pgfmathsetmacro{\ADEBDEy}{ (1/2)*(\ADEy+\BDEy)}
 \pgfmathsetmacro{\ABDADEx}{ (1/2)*(\ABDx+\ADEx)}
 \pgfmathsetmacro{\ABDADEy}{ (1/2)*(\ABDy+\ADEy)}
 \coordinate (A) at (\Ax,\Ay);
 \coordinate (B) at (\Bx,\By);
 \coordinate (C) at (\Cx,\Cy);
 \coordinate (D) at (\Dx,\Dy);
 \coordinate (E) at (\Ex,\Ey);
 \coordinate (ADE) at (\ADEx,\ADEy);
 \coordinate (ABE) at (\ABEx,\ABEy);
 \coordinate (ABD) at (\ABDx,\ABDy);
 \coordinate (BDE) at (\BDEx,\BDEy);
 \coordinate (BCD) at (\BCDx,\BCDy);
 \coordinate (BCE) at (\BCEx,\BCEy);
 \coordinate (CDE) at (\CDEx,\CDEy);
 \coordinate (ABC) at (\ABCx,\ABCy);
 \coordinate (ACD) at (\ACDx,\ACDy);
 \coordinate (ACE) at (\ACEx,\ACEy);
 \coordinate (ADEBDE) at (\ADEBDEx,\ADEBDEy);
 \coordinate (ABDADE) at (\ABDADEx,\ABDADEy);
 \draw[black,line width = .4pt] (A)--(B);
 \draw[black,line width = .4pt] (A)--(C);
 \draw[black,line width = .4pt] (A)--(E);
 \draw[black,line width = .4pt] (B)--(C);
 \draw[black,line width = .4pt] (B)--(E);
 \draw[black,line width = .4pt] (C)--(E);
 \draw[red] (ABD)--(ABE);
 \draw[red] (ABD)--(ADE);
 \draw[red] (ABD)--(BDE);
 \draw[red] (ABE)--(ADE);
 \draw[red] (ABE)--(BDE);
 %\draw[red] (ADE)--(BDE);
 %
 \draw[black,line width = .4pt] (D)--(E);
 \draw[channel,double=red] (ABD)--(ABE);
 \draw[channel,double=red] (ABD)--(ADE);%1
 \draw[channel,double=red] (ABD)--(BDE);%6
 \draw[channel,double=red] (ABE)--(ADE);%%
 %\draw[channel,double=red] (ABE)--(BDE);
% \draw[channel,double=blue,rounded corners] (ADEBDE)--(BDE)--(CDE)--(ADE)--(ADEBDE);

 %\draw[channel,double=red,rounded corners] (ABD)--(ADE)--(ACD);
 %\draw[channel,double=red,rounded corners] (ACD)--(CDE)--(BCD);
 %\draw[channel,double=red,rounded corners] (BCD)--(BDE)--(ABD);
 %
 \draw[channel,double=red] (ACD)--(ACE);
 \draw[channel,double=red] (ACD)--(ADE);%2x
 \draw[channel,double=red] (ACD)--(CDE);%3
 \draw[channel,double=red] (ACE)--(ADE);
 \draw[channel,double=red] (ACE)--(CDE);
 %\draw[channel,double=blue] (ADE)--(CDE);
 %
 \draw[channel,double=red] (BCD)--(BCE);
 \draw[channel,double=red] (BCD)--(BDE);%5
 \draw[channel,double=red] (BCD)--(CDE);%4x
 \draw[channel,double=red] (BCE)--(BDE);
 \draw[channel,double=red] (BCE)--(CDE);
% \draw[channel,double=blue] (BDE)--(CDE);
% \draw[channel,double=red,rounded corners] (ADE)--(BDE)--(CDE);
% \draw[channel,double=red,rounded corners] (ABE)--(BDE)--(BCE);
 %\draw[channel,double=red,rounded corners] (ABD)--(BDE)--(BCD);
% \draw[black,line width = .6pt] (A)--(B);
  \draw[channel,double=blue] (ADE)--(BDE);
   \draw[channel,double=blue] (BDE)--(CDE);
    \draw[channel,double=blue] (CDE)--(ADE);
 \draw[blue] (ADE)--(BDE);
   \draw[blue] (BDE)--(CDE);
    \draw[blue] (CDE)--(ADE);
 \draw[link] (D)--(A);
 \draw[link] (D)--(B);
  \draw[link] (D)--(C);
 \end{tikzpicture}
 \qquad
\begin{tikzpicture}
 node/.style={very thin}
\tikzset{%
  channel/.style    = { white, double = black, line width = .2pt,
                     double distance = 0.4pt },
  link/.style = { white, double = black, line width = 0.4pt,
                     double distance = 0.6pt },}
 \pgfmathsetmacro{\Ax}{-2.5}
 \pgfmathsetmacro{\Ay}{0}
 \pgfmathsetmacro{\Bx}{0}
 \pgfmathsetmacro{\By}{-2.5}
 \pgfmathsetmacro{\Cx}{1.25}
 \pgfmathsetmacro{\Cy}{1.25}
 \pgfmathsetmacro{\Dx}{-.625}
 \pgfmathsetmacro{\Dy}{-.75}
 \pgfmathsetmacro{\Ex}{.125}
 \pgfmathsetmacro{\Ey}{.625}
 \pgfmathsetmacro{\ADEx}{ (1/3)*(\Ax+\Dx+\Ex)}
 \pgfmathsetmacro{\ADEy}{ (1/3)*(\Ay+\Dy+\Ey)}
 \pgfmathsetmacro{\ABEx}{(1/3)*(\Ax+\Bx+\Ex)}
 \pgfmathsetmacro{\ABEy}{(1/3)*(\Ay+\By+\Ey)}
 \pgfmathsetmacro{\ABDx}{(1/3)*(\Ax+\Bx+\Dx)}
 \pgfmathsetmacro{\ABDy}{(1/3)*(\Ay+\By+\Dy)}
 \pgfmathsetmacro{\BDEx}{(1/3)*(\Bx+\Dx+\Ex)}
 \pgfmathsetmacro{\BDEy}{(1/3)*(\By+\Dy+\Ey)}
 \pgfmathsetmacro{\BCDx}{(1/3)*(\Bx+\Cx+\Dx)}
 \pgfmathsetmacro{\BCDy}{(1/3)*(\By+\Cy+\Dy)}
 \pgfmathsetmacro{\BCEx}{(1/3)*(\Bx+\Cx+\Ex)}
 \pgfmathsetmacro{\BCEy}{(1/3)*(\By+\Cy+\Ey)}
 \pgfmathsetmacro{\CDEx}{(1/3)*(\Cx+\Dx+\Ex)}
 \pgfmathsetmacro{\CDEy}{(1/3)*(\Cy+\Dy+\Ey)}
 \pgfmathsetmacro{\ABCx}{(1/3)*(\Ax+\Bx+\Cx)}
 \pgfmathsetmacro{\ABCy}{(1/3)*(\Ay+\By+\Cy)}
 \pgfmathsetmacro{\ACDx}{(1/3)*(\Ax+\Cx+\Dx)}
 \pgfmathsetmacro{\ACDy}{(1/3)*(\Ay+\Cy+\Dy)}
 \pgfmathsetmacro{\ACEx}{(1/3)*(\Ax+\Cx+\Ex)}
 \pgfmathsetmacro{\ACEy}{(1/3)*(\Ay+\Cy+\Ey)}
 \pgfmathsetmacro{\ADEBDEx}{ (1/2)*(\ADEx+\BDEx)}
 \pgfmathsetmacro{\ADEBDEy}{ (1/2)*(\ADEy+\BDEy)}
 \pgfmathsetmacro{\ABDADEx}{ (1/2)*(\ABDx+\ADEx)}
 \pgfmathsetmacro{\ABDADEy}{ (1/2)*(\ABDy+\ADEy)}
 \coordinate (A) at (\Ax,\Ay);
 \coordinate (B) at (\Bx,\By);
 \coordinate (C) at (\Cx,\Cy);
 \coordinate (D) at (\Dx,\Dy);
 \coordinate (E) at (\Ex,\Ey);
 \coordinate (ADE) at (\ADEx,\ADEy);
 \coordinate (ABE) at (\ABEx,\ABEy);
 \coordinate (ABD) at (\ABDx,\ABDy);
 \coordinate (BDE) at (\BDEx,\BDEy);
 \coordinate (BCD) at (\BCDx,\BCDy);
 \coordinate (BCE) at (\BCEx,\BCEy);
 \coordinate (CDE) at (\CDEx,\CDEy);
 \coordinate (ABC) at (\ABCx,\ABCy);
 \coordinate (ACD) at (\ACDx,\ACDy);
 \coordinate (ACE) at (\ACEx,\ACEy);
 \coordinate (ADEBDE) at (\ADEBDEx,\ADEBDEy);
 \coordinate (ABDADE) at (\ABDADEx,\ABDADEy);
 \draw[black,line width = .4pt] (A)--(B);
 \draw[black,line width = .4pt] (A)--(C);
 \draw[black,line width = .4pt] (A)--(E);
 \draw[black,line width = .4pt] (B)--(C);
 \draw[black,line width = .4pt] (B)--(E);
 \draw[black,line width = .4pt] (C)--(E);
 %%%%\draw[red] (ABD)--(ABE);
 %%%%\draw[red] (ABD)--(ADE);
 %%%%\draw[red] (ABD)--(BDE);
 %%%%\draw[red] (ABE)--(ADE);
 %%%%\draw[red] (ABE)--(BDE);
 %\draw[red] (ADE)--(BDE);
 %
 \draw[black,line width = .4pt] (D)--(E);
 \draw[channel,double=red] (ABD)--(ABE);
 %\draw[channel,double=red] (ABD)--(ADE);%1
 %\draw[channel,double=red] (ABD)--(BDE);%6
 %\draw[channel,double=red] (ABE)--(ADE);%%
 %%\draw[channel,double=red] (ABE)--(BDE);
 \draw[channel,double=blue,rounded corners] (ADEBDE)--(BDE)--(CDE)--(ADE)--(ADEBDE);
 \draw[channel,double=red,rounded corners] (ABD)--(ADE)--(ACD);
 \draw[channel,double=red,rounded corners] (ACD)--(CDE)--(BCD);
 \draw[channel,double=red,rounded corners] (BCD)--(BDE)--(ABD);
 \draw[channel,double=red,rounded corners] (ABE)--(BDE)--(BCE);
 \draw[channel,double=red,rounded corners] (BCE)--(CDE)--(ACE);
 \draw[channel,double=red,rounded corners] (ACE)--(ADE)--(ABE);
 \draw[channel,double=red] (ACD)--(ACE);
 %\draw[channel,double=red] (ACD)--(ADE);%2x
 %\draw[channel,double=red] (ACD)--(CDE);%3
 %%\draw[channel,double=red] (ACE)--(ADE);
 %%\draw[channel,double=red] (ACE)--(CDE);
 %\draw[channel,double=blue] (ADE)--(CDE);
 %
 \draw[channel,double=red] (BCD)--(BCE);
 %\draw[channel,double=red] (BCD)--(BDE);%5
 %\draw[channel,double=red] (BCD)--(CDE);%4x
 %%\draw[channel,double=red] (BCE)--(BDE);
 %%\draw[channel,double=red] (BCE)--(CDE);
 %\draw[channel,double=blue] (BDE)--(CDE);
% \draw[channel,double=red,rounded corners] (ADE)--(BDE)--(CDE);
% \draw[channel,double=red,rounded corners] (ABE)--(BDE)--(BCE);
% \draw[channel,double=red,rounded corners] (ABD)--(BDE)--(BCD);
% \draw[black,line width = .6pt] (A)--(B);
 \draw[link] (D)--(A);
 \draw[link] (D)--(B);
  \draw[link] (D)--(C);
 \end{tikzpicture}
\caption{In case $a_{n-(k+1)}$ shares a face with two tetrahedra of $\hat{\bF}(k)$, the red+blue graph that
emerges from the cobordism is not the boundary of $\bF(k+1)$, but of a disjoint union of $\bF(k+1)$ and a spurious
(blue) loop.}
\label{fig:edgecobordism}
\end{figure}
%To \red{define $\bF(k+1)$ and obtain from it} an exact cobordism from \red{$L(\bF(k))\sqcup HL_{k+1}$ to $L(\bF(k+1)$,} we remove the spurious blue loop
%with a cobordism.  Consider the boundary of a neighborhood of the blue loop.  It is two circles, and the preimage is two pairs of circles, each pair
%cobordant in the Legendrian
%by two disjoint cylinders.  
%After the cobordism -- which is itself two copies of the one pictured in Figure \ref{fig:facecobordism} ---
%the two cylinders are replaced by two pairs of two disks, still bounding the same four preimage circles.

%\red{XXX --- Replace previous paragraph by the following.
To define $\bF(k+1)$ and obtain from it a singular exact filliing, we construct a cobordism from the boundary of $L(\bF(k))\sqcup HL_{k+1}$ to a new space
by modifying the wavefront to remove the spurious blue loop.
Consider a neighborhood of the blue loop in the front projection.  It is a cylinder.
The cobordism takes this cylinder to two disks, just like the two-to-one-sheet hyperboloid cobordism $x^2 + y^2 - z^2 = t,\; t\in [0,1]$.
Afterward, the blue loop disappears and the red edges make up a new polytope, or dually define $\bF(k+1)$, which in Figure \ref{fig:edgecobordism} is a triangular prism.

%
%\red{
%\begin{remark}
%\label{rmk:cobehk}
%As in the two-dimensional case (see Remark \ref{rmk:ehk}) the result of this procedure only depends on the triangulation, not a shelling.
%Indeed, we could have performed the internal face cobordisms all at one, followed by the internal-edge cobordisms that remove any spurious hexagons.
%\end{remark}
%}

\subsection{The tangles associated to a foam}
\label{subsec:tangles}

The one-skeleton $\bF^1$ of a foam $\bF \subset D^3$ is a kind of singular tangle in the ball, joining the vertices of graph on the boundary.  The singularities are at the internal vertices $\bF^0$ of the foam, suppose that there are $T$ of these.  Then we can associated as many as $3^T$ smoothings of this singular tangle, as follows.  In a neighborhood of each of the $T$ vertices, choose coordinates in which $\bF^1$ looks like the rays of \eqref{eq:those-rays}, and replace $\bF^1$ by one of the smooth hyperbolas
\begin{equation}
\label{eq:smoothed-tangle}
\{(x_1,x_2,x_3) \mid x_{i_2} = x_{i_3} \text{ and } x_{i_1}^2 - x_{i_2}x_{i_3} = 1\}
\end{equation}
where $i_1,i_2,i_3$ is a permutation of $\{1,2,3\}$ --- the hyperbola is determined just by the value of $i_1$.
In fact, this is precisely the critical locus of the projection to $\bR^3_x$ of the Harvey-Lawson smoothing,
the solid torus of Example \ref{ex:the-HL-solid} below.

We have not investigated what kinds of tangles can appear, the examples we have encountered so far are all quite unlinked.

\begin{remark}
In \cite{CCV} and especially \cite{CEHRV}, singular and smooth tangles were considered in a nearly identical context:  constructing
Lagrangian branes.  However, the authors there did not consider fillings of a fixed Legendrian boundary.  Our purposes require the
consideration of foams.  In the work of \cite{CEHRV},
all smoothings of tangles were considered.  We do not know if such tangles occur within the context of foams.
\end{remark}

\begin{example}
Let $\bF$ be the foam on the left and $\bG$ the foam on the right in Figure \ref{fig:two-foams}.  
In general, smoothings are naturally indexed by the data of, at each internal vertex $v$, a partition of the four edges incident with $v$ into two pairs.  Taken up to the dihedral symmetry of the foams, there are three possibilities for $\bF$ and six for $\bG$.  (For $\bF$, one of these smoothings is pictured in blue in \S\ref{sec:tentframing}.)  Eight of these nine tangles are abstractly homeomorphic to each other, and in fact to the ``trivial tangle'' of three parallel strands.  The ninth tangle, which appears for $\bG$: it is the union of three parallel strands, along with an unknot which is not linked with any strand.
\end{example}

\subsection{Nonexact fillings --- tangles as caustics}
\label{subsec:nonexact-fillings}

We have constructed singular exact fillings of $\leg$ with the topology of double covers of the ball, branched over $\bF^1$, the edges of a foam.  Example \ref{ex:the-HL-solid} below suggests that we search for smooth fillings with the topology of a double cover of the ball, branched over a tangle obtained by smoothing $\bF^1$ in the sense of \S\ref{subsec:tangles}.  However the example also suggests that such smoothings will not be exact.  Theorem \ref{thm:nofillings} gives a strong result making this precise.  Let us therefore make some general remarks about nonexact fillings.

Let $(M,\omega = d \alpha)$ be an exact symplectic manifold.  In the literature on Lagrangian fillings of Legendrians $\Lambda \subset \partial M$, the condition that the Lagrangian has conical ends is often imposed.  For 3-dimensional Lagrangians, this has an undesirable consequence.  If $L$ is $3$-dimensional and $U$ is a collar neighborhood of the boundary of $L$, the map $H^1(L,\bR) \to H^1(U,\bR)$ is often injective, so if $\alpha$ is exact on $U$, it is exact on all of $L$.  This suggests that if we wish to consider non-exact Lagrangians, the condition that $L$ has conical ends is inappropriate.  A notion of ``asymptotically conical'' might be more appropriate, as in \cite[Def. 7.1]{J} for special Lagrangians, or \cite[Def. 5.4.1]{NZ} for subanalytic Lagrangians.

\begin{example}[The Harvey-Lawson solid tori]
\label{ex:the-HL-solid}
There are three one-parameter families of these, given parametrically by
\[
(r,e^{is},e^{it}) \mapsto (\sqrt{r^2 + \epsilon^2} e^{is}, re^{it}, r e^{-is-it})
\]
and its permutations.  The antiderivative of $\alpha\vert_{\mathit{HL}_{\epsilon}}$ is
\[
\text{$f$ of \eqref{eq:HL-primitive}} + \frac{1}{2} \epsilon^2 \sin(s) \cos(t) + \frac{1}{2} \epsilon^2 s
\]
The last term, $\frac{1}{2} \epsilon^2 s$, is not periodic and therefore $\mathit{HL}_{\epsilon}$ is not exact --- however it also has no $r$-dependence.
It is an asymptotically conic special Lagrangian, in the sense studied by Joyce (see \cite[Example 6.9]{J}).
For general tangles, the existence of asymptotically special Lagrangian smooth embeddings in $\bC^3$ is unproven.
However, we have the following construction of non-special Lagrangian fillings in $\bC^3$, potentially with isolated
immersed double points.

\end{example}

If $\cT$ is a tangle associated to a foam $\bF$, write $\pi:L(\cT) \to D^3$ for the double cover branched over $\cT$.  In the rest of the paper we will refer to any Lagrangian map $L(\cT) \to T^* D^3$ whose projection to $D^3$ is $\pi$ as a ``filling'' of $\leg$ --- or a ``filling modeled on $\cT$'' --- without heed for asymptotic conditions.
Note that even so, the boundary of $L(\cT)$ is canonically identified with $\leg$,  since $\cT \cap \partial D^3$ is exactly the set of vertices of $\Gamma$.  We also will not necessarily assume that the map $L(\cT) \to T^* D^3$ is an embedding.

Once a filling modeled on $\cT$ is given, the pull-back of the canonical one-form on $T^* D^3$ to $L(\cT)$ is closed.  Conversely, for each closed one-form there is at most one Lagrangian map $L(\cT) \to T^* D^3$ whose projection to $D^3$ is $\pi$.

To see that a filling modeled on $\cT$ is determined by a $1$-form on $L(\cT)$, write $x_1,x_2,x_3$ for the coordinates on $D^3$, $y_1,y_2,y_3$ for the coordinates on $T^* D^3$, and let $s_1,s_2,s_3$ be local coordinates on $L(\cT)$.  
If $\alpha = \alpha_1 ds_1 + \alpha_2 ds_2 + \alpha_3 ds_3$ is a closed one-form on $L(\cT)$, then $y_1,y_2,y_3$ are the solutions to $\sum y_i dx_i = \alpha$, which is equivalent to
\begin{equation}
\label{eq:Lagrangian-equation}
\left(
\begin{array}{rrr}
{\partial x_1}/{\partial s_1} & {\partial x_2} /{\partial{s_1}} &  {\partial x_3}/{\partial s_1} \\
{\partial x_1}/{\partial s_2} & {\partial x_2} /{\partial{s_2}} &  {\partial x_3}/{\partial s_2} \\
{\partial x_1}/{\partial s_3} & {\partial x_2} /{\partial{s_3}} &  {\partial x_3}/{\partial s_3} \\
\end{array}
\right)\left(
\begin{array}{r}
y_1 \\ y_2 \\ y_3
\end{array}
\right) =
\left(\begin{array}{rrr}
\alpha_1 \\
\alpha_2 \\
\alpha_3
\end{array}
\right)
\end{equation}
The matrix on the left-hand side is the Jacobian of $\pi$, which is invertible away from $\cT$.  For \eqref{eq:Lagrangian-equation} to have a solution over $\cT$ requires that $\alpha$ vanish along $\cT$.  The existence of such $\alpha$ is addressed by the following:

\begin{proposition}
\label{prop:one-form}
Let $\pi: L(\cT) \to D^3$ be a branched double cover of a tangle $\cT \subset D^3$,
and let $\iota$ denote the Deck involution.
Suppose that $\cT$ has no circle components (just strands), and let $\theta$ be a closed one-form on $L(\cT)$.
Then there is a function $F:L(\cT)\to R$
such that $\alpha := \theta + dF$ is \emph{odd} under $\iota$, i.e. $\iota^* \alpha = -\alpha$.  In particular, $\alpha$ vanishes on $\cT$.  Moreover, any other $\alpha'$ satisfying these properties differs from $\alpha$ by $dG$, where
$G = -G \circ\iota$, i.e. $G$ is odd under $\iota.$
\end{proposition}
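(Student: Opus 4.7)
The plan is to split $\theta$ into its $\iota$-symmetric and antisymmetric parts and kill the symmetric piece with an exact adjustment. Define
\[
\theta_+ = \tfrac{1}{2}(\theta + \iota^*\theta), \qquad \theta_- = \tfrac{1}{2}(\theta - \iota^*\theta),
\]
so that $\theta = \theta_+ + \theta_-$, each summand is closed, and $\iota^*\theta_\pm = \pm\theta_\pm$. Existence then reduces to producing a smooth $\iota$-invariant function $F$ on $L(\cT)$ with $dF = -\theta_+$: the form $\alpha := \theta + dF = \theta_-$ will be odd under $\iota$, and will automatically vanish tangentially along the fixed locus $\cT$.

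The main step is to show $\theta_+$ is exact. For this I would invoke the standard transfer isomorphism for a $\bZ/2$-action on a paracompact space with characteristic-zero coefficients, which gives
\[
\pi^* : H^1(D^3;\bR) \xrightarrow{\sim} H^1(L(\cT);\bR)^\iota.
\]
Since $D^3$ is contractible, the right-hand side vanishes, so $[\theta_+] = 0$ and $\theta_+ = dH$ for some smooth $H$. The no-circles hypothesis guarantees the existence of a fixed point $p_0 \in \cT$, which lets one present $H$ concretely as $H(q) = \int_{p_0}^q \theta_+$; the change of variables $\int_{\iota\gamma}\theta_+ = \int_\gamma \iota^*\theta_+ = \int_\gamma \theta_+$ then immediately forces $H$ to be $\iota$-invariant. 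Alternatively one may simply average: $F := -\tfrac{1}{2}(H + H\circ\iota)$ is invariant by construction and still satisfies $dF = -\theta_+$ because $\theta_+$ is itself $\iota$-invariant.

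For uniqueness, suppose $\alpha' = \theta + dF'$ is another odd primitive. Then $dG_0 := d(F' - F) = \alpha' - \alpha$ is itself odd, so $d(G_0 + G_0 \circ \iota) = 0$; since $L(\cT)$ is connected, $G_0 + G_0\circ\iota = c$ is a constant, and $G := G_0 - c/2$ is odd and satisfies $dG = \alpha' - \alpha$, as claimed. The step deserving the most care is the transfer isomorphism itself, since $\iota$ acts with a nontrivial fixed locus rather than freely; the classical theorem $H^*(X/G;k) \cong H^*(X;k)^G$ (valid whenever $|G|$ is invertible in $k$) applies regardless of freeness, which is why the argument goes through cleanly, and the no-circles hypothesis plays the dual role of giving a fixed basepoint on $\cT$ for the concrete integration and ensuring $L(\cT)$ is a clean smooth manifold for de Rham calculations.
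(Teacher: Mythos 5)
Your proof is correct, and it takes a genuinely different route from the paper's. You decompose $\theta$ into its $\iota$-even and $\iota$-odd pieces $\theta_\pm$ and kill the even piece directly using the transfer isomorphism $H^1(L(\cT);\bR)^\iota \cong H^1(D^3;\bR) = 0$, then average $H$ to get an invariant primitive; the odd piece $\theta_-$ is then the desired $\alpha$, and oddness forces its pullback to the fixed locus to vanish. The paper instead works in relative cohomology: it uses the long exact sequence of the pair $(L,\cT)$, invoking $H^1(\cT)=0$ (here is exactly where the no-circles hypothesis is used) to get the four-term exact sequence $H^0(L)\to H^0(\cT)\to H^1(L,\cT)\to H^1(L)\to 0$, and then matches dimensions and eigenvalues to show the $(-1)$-eigenspace of $H^1(L,\cT)$ maps isomorphically to $H^1(L)$. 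Your approach buys a shorter argument that sidesteps relative cohomology and the dimension count, and --- as you half-notice --- it does not actually use the no-circles hypothesis for the cohomological step at all, since the transfer isomorphism and the averaging work regardless of $H^1(\cT)$; the hypothesis enters the surrounding construction elsewhere (it is what makes the odd representatives relevant to the Lagrangian-filling discussion). The paper's argument, in exchange, packages the same information as an explicit statement about $H^1(L,\cT)$ as an $\iota$-module, which is perhaps more in line with how the vanishing-on-$\cT$ condition is motivated in \S\ref{subsec:nonexact-fillings}. One small remark: your parenthetical that ``no circles'' ensures $L(\cT)$ is smooth is not the real role of the hypothesis --- a branched double cover over a smoothly embedded curve is smooth whether or not the curve has closed components --- but this does not affect the validity of your proof.
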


Before proving the proposition, we remark:  it follows that for each class in $H^1(L(\cT),\bR)$, there is a Lagrangian immersion into $\bR^6 \cong T^*D^3$ with at worst isolated double points, such that the pull-back of the
canonical form of $T^*D^3$ is in the same cohomology class.
Indeed we can select $\alpha$ as in Proposition \ref{prop:one-form} and construct the embedding as described by \eqref{eq:Lagrangian-equation}.
Zeroes of $\alpha$ give double points of the immersion.

\begin{proof}
Let us write $L$ for $L(\cT).$
If $Z^1(L,\cT)$ is the space of closed $1$-forms that vanish on $\cT$, and $B^1(L,\cT)$ is the subspace of $1$-forms like $dG$, where $G$ vanishes on $\cT$, then we have a short exact sequence of $\iota$-modules
\[
0 \to B^1(L,\cT) \to Z^1(L,\cT) \to H^1(L,\cT) \to 0
\]
We will prove the Proposition by proving that the tautological map $H^1(L,\cT) \to H^1(L)$ is an isomorphism when restricted to the $(-1)$-eigenspace of $H^1(L,\cT)$.

We have assumed $\cT$ has no circle components, so $H^1(\cT) = 0$.  The long exact sequence of the pair $(L,\cT)$ therefore induces a short exact sequence
\begin{equation}
\label{eq:SES-in-proof}
H^0(L) \to H^0(T) \to H^1(L,\cT) \to H^1(L) \to 0
\end{equation}
\eqref{eq:SES-in-proof} is a short exact sequence of $\iota$-modules.  Both  $H^0(\cT)/H^0(L)$ and $H^1(L)$, have dimension $g$ over $\bR$ --- $\iota$ acts trivially on $H^0(\cT)/H^0(L)$ and (since $L/\iota = D^3$ and $H^1(D^3) = 0$) by $-1$ on $H^1(L)$.  Thus the $(-1)$-eigenspace of the $\iota$-action on $H^1(L,\cT)$ is identified with $H^1(L)$.

\end{proof}

\begin{remark}
It is tempting to speculate that by choosing $G$ in the Proposition correctly, one can make the map $L(\cT) \to T^* D^3 \cong \bR^6$ a special Lagrangian immersion, or even embedding.  This amounts to showing the existence of a solution to a PDE of Monge-Amp\`ere type for the single function $G$.
\end{remark}

\section{Constructible sheaves}

To this point, we have defined a Legendrian surface $\leg$ from a cubic, planar graph $\Gamma,$ and singular, exact Lagrangian fillings
from foams, as well as their smoothings via tangles.  In this section, we study a category of sheaves defined by $\leg,$
which by \cite{N2,NZ} is a Fukaya category.  We give a concrete description of this category and its moduli space $\cM$ of simple objects,
then use this moduli space to give many examples of non-isotopic Legendrians with the same genus and classical invariants,
and to prove non-fillability for Legendrians constructed
from simple cubic planar graphs. 
We show that $\cM$ embeds into a period domain as a Lagrangian submanifold, and define a generalized notion of ``phase'' and ``framing'' (related
to the tangle description of the filling), to define conjectural open Gromov-Witten invariants \`a la \cite{AV,AKV}.
\medskip

Fix a commutative ground ring $k$ and write $\Sh(\bR^3)$ for the $k$-linear dg-derived category of constructible sheaves of $k$-modules on $\bR^3$.  In this section, we study full subcategories of $\Sh(\bR^3)$ with singular support determined by a hyperelliptic wavefront (Definition \ref{def:hyperelliptic-wavefront}) --- i.e. with singular support in either the Legendrian lift or the extended Legendrian lift (see Section \ref{subsec:extended-leg}) of a hyperelliptic wavefront.  Thus fix a graph $\Gamma \subset S^2$, let $\Psi \subset \bR^3$ be the image of the associated hyperelliptic wavefront, and let $\leg \subset \leg^+ \subset T^{\infty} \bR^3$ be the associated Legendrians.  We define 
\[
\cC(\leg) \subset \cC(\leg^+) \subset \Sh(\bR^3)
\]
to be the full subcategories of sheaves that are compactly supported, and that have singular support in $\leg$ or $\leg^+$.
That is, $\cC(\leg) := \Sh(\bR^3,\leg)$ and $\cC(\leg^+) := \Sh(\bR^3,\leg^+).$

\subsection{Regular cell decomposition}
\label{subsec:regular-cell-decomposition}

We let $g$ be as in \eqref{eq:fveg}, and write $\Psi$ for the image of the hyperelliptic wavefront.  The filtration 
\begin{equation}
\label{eq:stratification}
\text{(vertices of $\Gamma$)} \subset \Gamma \subset \Psi \subset \bR^3
\end{equation}
gives a Whitney stratification of $\bR^3$, with $2g+2$ strata of dimension zero, $3g+3$ strata of dimension one, $2g+6$ strata of dimension two and $g+5$ top-dimensional strata.  The edges and vertices of $\Gamma$ are also edges and vertices of \eqref{eq:stratification}, but let us give some vocabulary for the two- and three-dimensional strata.

For the top-dimensional strata, there is unique region containing the origin and a unique region incident with the point at $\infty$, that we call the ``inner'' and ``outer'' region respectively.  The remaining open strata are in one-to-one correspondence with the $g+3$ faces of $\Gamma$, we call these regions ``pillows.''  Each two-dimensional stratum is in the boundary of a unique pillow --- we call them ``sheets.''  Each pillow has exactly two sheets at its boundary, the ``inner sheet'' which is incident with the inner region and the ``outer sheet'' incident with the outer region.

If we omit the outer region (which has the topology of $S^2 \times \bR$), the stratification is a regular cell complex.  We therefore have an equivalence (see e.g.  \cite[\S 8.1]{KS} \cite[Prop. 3.9]{STZ}) between 
\begin{itemize}
\item the derived category of sheaves that are constructible for the stratification and acyclic in the outer region 
\item the derived category of functors from the partially ordered set of strata (not including the outer region) to $k$-modules.
\end{itemize}
We will use this equivalence freely in what follows, describing a sheaf by a strictly commutative diagram of chain complexes in the shape of the poset of strata.  It is convenient to include the outer region in these diagrams, but it is always to be labeled by the zero complex.  

We will describe conditions on such diagrams to belong to $\cC(\leg^+)$ and $\cC(\leg)$.  These conditions are local, so the cases to be considered are a neighborhood of a vertex, and edge, and an inner or outer sheet.  In the end we find that an object of $\cC(\leg^+)$ or of $\cC(\leg)$ has a very simple description, which we summarize in \S\ref{subsec:concrete}.

\subsubsection{Local study at one- and two-dimensional strata}
\label{subsec:local-study-one-two}
In codimension two or less, a wavefront hypersurface (of a manifold of any dimension $n$) looks like a product of a
smooth hypersurface in $\bR^{n-2}$ with the front diagram of a Legendrian knot in $\bR^3$ --- we have studied these in detail in \cite{STZ}.  We recall the local descriptions for a front without cusps (thus without any genuine front singularities) here --- it is a special case of \cite[Thm. 3.12]{STZ}.

Suppose $\Sigma$ is a sheet incident with the two regions $R_1$ and $R_2$, with $R_1$ farther from the origin than $R_2$.  Then a sheaf $F$ is given near $\Sigma$ by a diagram
\[
F(R_1) \leftarrow F(\Sigma) \to F(R_2)
\]
If $\Sigma$ is an outer sheet, then $R_1$ is the outer region and we require $F(R_1)$ to be the zero complex.  If the singular support is to lie in $\leg^+$ (or $\leg$, away from the edges there is no difference), the map $F(\Sigma) \to F(R_1)$ must be an isomorphism.

If $i$ is an edge of $\Gamma$, a sheaf $F$ is given near $i$ by a diagram
\begin{equation}
\label{eq:typical-TTT}
\xymatrix{
& & U  \\
& A \ar[ur] \ar[dl] & & A' \ar[ul] \ar[dr] \\
B & & \ar[ur] \ar[ul] \ar[dr] \ar[dl] X & & B' \\
& C \ar[dr] \ar[ul] & & C' \ar[ur] \ar[dl] \\
& & D
}
\end{equation}
where $U$ is the complex labeling the outer region (it must be zero), $B$ and $B'$ label the pillows, $D$ labels the inner region, $A$ and $A'$ label upper sheets, $C$ and $C'$ label lower sheets, and $X$ labels the edge.  The sheaf belongs to $\cC(S^+)$ if and only if the six maps $X \to A$, $X \to A'$, $A \to U$, $A' \to U$, $C \to B$, $C' \to B'$ are quasi-isomorphisms.  Note in particular that this requires that $X, A$, and $A'$ are acyclic.  It belongs to $\cC(S)$ if and only if the square at the bottom is exact (i.e. it realizes $X$ as the homotopy fiber product of the maps $C,C' \to D$).

\subsubsection{Local study at a vertex}
\label{subsec:local-study-vertex}

The natural stratification of the wavefront given by \eqref{eq:vertexequation} has one vertex, three edges, six 2-dimensional strata (wedge-shaped sheets), and five 3-dimensional strata (three pillows in between the sheets, an outer region and an inner region).  Their pattern of incidences is recorded in \eqref{eq:big-diagram} --- the vertex is denoted $Z$, the edges are denoted $X$, the sheets are the $A$'s and $C$'s, the three pillows are $B$s, and the big open regions are $U$ and $D$.

\begin{equation}
\label{eq:big-diagram}
\xymatrix{
& & U \\
& & & & A_1 \ar[llu] \ar[d] \\
& & & & B_1 \\
& X_2  \ar[dl] \ar[dddl] \ar[rrruu] \ar[rrr] & & & C_1 \ar[dddddl] \ar[u] &  \\
 A_2\ar[uuuurr] \ar[d] & & & Z \ar[rrr] \ar[dl] \ar[ull] & & & X_1 \ar[dddl] \ar[ull] \ar[uuull] \ar[dl]  \\
B_2 & & X_3 \ar[rrrdd] \ar[ull] \ar[rrr] \ar[lld] & & & A_3 \ar[d] \ar[uuuuulll] \\
C_2 \ar[u] \ar[rrrdd] & & & & & B_3 \\
& & & & & C_3 \ar[u]\ar[dll] \\
& & & D
}
\end{equation}
The sheaf corresponding to \eqref{eq:big-diagram} belongs to $\cC(\leg^+)$ if and only if the following conditions hold (where $X$ means any $X_i$, etc.):

\begin{enumerate}
\item $U = 0$, 
\item all the maps from $A \to U$, $X \to A$, and $C \to B$ are quasi-isomorphisms, 
\item The total complex of
\begin{equation}
\label{eq:big-micro-stalk}
Z \to \big( X_1 \oplus X_2 \oplus X_3 \big) \to \left(
 \begin{array}{c}
 A_1 \oplus A_2 \oplus A_3 \oplus\\
 C_1 \oplus C_2 \oplus C_3
 \end{array} 
 \right) \to \big( B_1 \oplus B_2 \oplus B_3  \big)\to D
\end{equation}
is acyclic.
\end{enumerate}
It belongs to $\cC(\leg)$ if and only if it obeys the further condition 
\begin{enumerate}
\item[(4)] Each commutative square between $X$ and $D$ is exact, i.e. the total complex of
\[
X \to C_i \oplus C_{i+1} \to D
\]
is acyclic.
\end{enumerate}
Indeed the three commutative $3 \times 3$ squares in \eqref{eq:big-diagram}
\[
\xymatrix{
U & A_2 \ar[r]  \ar[l] & B_2 \\
A_1 \ar[u] \ar[d]& X_1 \ar[l] \ar[r] \ar[u] \ar[d] & C_1 \ar[u] \ar[d] \\
B_1 & \ar[l] C_1 \ar[r] & D
}
\quad
\xymatrix{
U & A_3 \ar[r]  \ar[l] & B_3 \\
A_2 \ar[u] \ar[d]& X_2 \ar[l] \ar[r] \ar[u] \ar[d] & C_2 \ar[u] \ar[d] \\
B_2 & \ar[l] C_2 \ar[r] & D
}
\quad
\xymatrix{
U & A_1 \ar[r]  \ar[l] & B_1 \\
A_3 \ar[u] \ar[d]& X_3 \ar[l] \ar[r] \ar[u] \ar[d] & C_3 \ar[u] \ar[d] \\
B_3 & \ar[l] C_3 \ar[r] & D
}
\]
are each of the form considered in \eqref{eq:typical-TTT}, which establishes (1) and (2).  The equation \eqref{eq:big-micro-stalk} gives the microstalk of the sheaf at the singular point in the vertical codirection.

\subsection{Concrete description of $\cC(\leg)$ and $\cC(\leg^+)$}
\label{subsec:concrete}
Consider the star-shaped quiver with a single sink (let us call it $o$) and $g+3$ sources indexed by the faces $a$ of $\Gamma$.  The derived category of this quiver is equivalent to the category of constructible sheaves on the union of the inner region and the inner sheets (labeled by $D$s and $C$s in the local descriptions \S\ref{subsec:local-study-one-two}--\ref{subsec:local-study-vertex}).  It follows from the discussion in \S\ref{subsec:regular-cell-decomposition} that the restriction functor from $\cC(\leg^+)$ to this union is an equivalence. 

In this quiver description, $\cC(\leg) \subset \cC(\leg^+)$ is a full subcategory, with $F$ belonging to $\cC(\leg)$ if and only if whenever $a$ and $b$ are faces of $\Gamma$ separated by an edge,
\begin{equation}
\label{eq:crossing-condition}
\text{the map $F(a) \oplus F(b) \to F(o)$ is an isomorphism.}
\end{equation}
(In the analogous story for Legendrian curves, this is the ``crossing condition'' of \cite{STZ}.)

\begin{remark}
Note in particular that the quasi-equivalence class of the dg category $\cC(\leg^+)$, since it only depends
on the genus, $g$, does not depend on the graph $\Gamma$.  A more precise form of this statement can be obtained from the picture of \S\ref{subsec:edge-and-disk-moves}:
if $\Gamma'$ is obtained from $\Gamma$ by a sequence of edge moves, then $(\leg')^+$ will be obtained from $(\leg)^+$ by a sequence of disk moves, which induce equivalences of categories as in \cite[\S 2.3]{STW}.  In particular, the moduli spaces of objects in $\cC(\leg)$ and $\cC(\leg')$ will be related to each other by a sequence of cluster transformations.
\end{remark}

\subsection{Moduli of microlocal rank one objects}
\label{def:MGamma}

Note \eqref{eq:crossing-condition} has the consequence that, if $F \in \cC(\leg)$ and $F(o)$ is a vector space concentrated in degree $0$, then $F(o)$ must be even-dimensional, with the spaces labeling the pillows being concentrated in degree $0$ and having half the dimension.  We call that dimension the \emph{microlocal rank} of the object.

Thus if $F$ has microlocal rank one, $F(o)$ is a 2-dimensional vector space.  Let us say that a framed object is one equipped with an isomorphism $F(o) \cong k^2$.  A framing rigidifies an object: there are no automorphisms of an object (nor even self-homotopies of the identity automorphism) that preserve the framing except for the identity --- there is a fine moduli space of framed sheaves of microlocal rank one.  It can be described concretely as an open subset of $(\bP^1)^{ (g+3)}$, where the factors of $\bP^1$ are indexed by the faces of $\Gamma$.

We define an affine open subset $\Mfr(\Gamma) \subset (\bP^1)^{g+3}$ as follows.
A point in $\Mfr(\Gamma)$ is a collection of $z_a\in \bP^1$, one for each face $a$ of $\Gamma$, subject to the condition that $z_a \neq z_b$ whenever $a$ and $b$ share an edge of $\Gamma$.

$\PGL_2$ acts diagonally on $\Mfr(\Gamma)$.  Define $\cM(\Gamma)$ as the quotient $\cM(\Gamma) := \Mfr(\Gamma)/\PGL_2.$ 

\subsection{The chromatic polynomial as a Legendrian invariant}
%Let $\leg$ and $\Gamma$ be as in Section \ref{} and $\Mfr(\Gamma)$ as in Section \ref{}.
The definition of $\Mfr(\Gamma)$ makes sense over any commutative ring,
so we consider it over the finite fields $\bF_q$ with $q$ a prime power.  The number of $\bF_q$-points of $\Mfr(\Gamma)$ is equal to the number of $(q+1)$-colorings of the map defined by $\Gamma$, or equivalently the number of $(q+1)$-colorings of the dual planar graph $\hat{\Gamma}$ --- these are the values at $q+1$ of the \emph{chromatic polynomial} of $\hat{\Gamma}$.

\begin{theorem}
\label{thm:gamgamprime}
Suppose $\Gamma$ and $\Gamma'$ are cubic planar graphs with the same number of vertices, so the respective surfaces $\leg$ and $\leg'$ have the same genus and the same classical invariants (cf. Prop. \ref{prop:same-classical-invariants}).
If $\hat{\Gamma}$ and $\hat{\Gamma'}$ do not have the same chromatic polynomial, then $\leg$ and $\leg'$ are not Legendrian isotopic.
\end{theorem}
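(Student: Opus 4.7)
The strategy is to show that the (isomorphism type of the) moduli space $\cM(\Gamma)$ constructed in \S\ref{def:MGamma} is a Legendrian isotopy invariant of $\leg$, and then to extract the chromatic polynomial from its $\bF_q$-point counts via Proposition \ref{intro-prop-1}.

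First I would invoke the Guillermou--Kashiwara--Schapira invariance theorem: a Legendrian isotopy of $\leg$ inside $T^\infty \bR^3$ is covered by a contact isotopy, and the induced sheaf quantization gives an equivalence of dg categories $\Sh(\bR^3,\leg) \simeq \Sh(\bR^3,\leg')$. Since $\cC(\leg)$ and $\cC(\leg')$ are defined as the full subcategories of compactly supported objects with prescribed singular support, this equivalence restricts to a quasi-equivalence $\cC(\leg) \simeq \cC(\leg')$. The property of being microlocal rank one, which by \S\ref{subsec:concrete} translates into $F(o)$ being two-dimensional with each pillow stalk of dimension one, is defined categorically (via the $\mu\mathit{hom}$ functor) and so is preserved under such equivalences.

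Next I would argue that the moduli functor of simple (microlocal rank one) objects is itself a categorical invariant: it represents the substack of objects whose categorical endomorphism algebra is $k$ and whose microlocal monodromy has the prescribed rank. Hence the equivalence of categories above induces an isomorphism of moduli spaces $\cM(\Gamma) \cong \cM(\Gamma')$ as schemes (or at least as functors of points over commutative rings), and in particular
\[
\#\cM(\Gamma)/\bF_q \;=\; \#\cM(\Gamma')/\bF_q \qquad \text{for every prime power } q.
\]

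The final step is purely combinatorial: by Proposition \ref{intro-prop-1},
\[
(q^3-q)\cdot \#\cM(\Gamma)/\bF_q \;=\; P_{\hat{\Gamma}}(q+1),
\]
and similarly for $\Gamma'$. Thus $P_{\hat{\Gamma}}(q+1) = P_{\hat{\Gamma'}}(q+1)$ for infinitely many integers $q$, which forces the equality of the polynomials $P_{\hat{\Gamma}}$ and $P_{\hat{\Gamma'}}$, contradicting the hypothesis.

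The main subtlety to pin down is the second paragraph: ensuring that ``microlocal rank one moduli'' really is preserved on the nose by the GKS equivalence, including over arbitrary base rings (so that the $\bF_q$-count makes sense). Since $\Mfr(\Gamma)$ is defined as an open subscheme of $(\bP^1)^{g+3}$ and the framing is a choice of categorical datum, the cleanest way is to work at the level of the framed moduli functor and observe that GKS quantization is $k$-linear and compatible with base change, so that it induces an isomorphism of $\PGL_2$-torsors over $\cM(\Gamma)$ and $\cM(\Gamma')$.
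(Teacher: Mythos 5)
Your proposal is correct and follows essentially the same route as the paper's own proof: apply GKS sheaf quantization to obtain an isomorphism of the moduli spaces $\cM(\Gamma) \cong \cM(\Gamma')$, then count $\bF_q$-points via Proposition~\ref{intro-prop-1} to recover the chromatic polynomials. You spell out in more detail why the equivalence preserves microlocal rank one objects and induces an isomorphism of moduli schemes over arbitrary base rings, but the underlying argument is the one the paper uses.
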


\begin{proof}
We prove the contrapositive.  Whenever $\leg$ is Legendrian isotopic to $\leg',$ the GKS-equivalence \cite{GKS} gives an isomorphism between $\cM(\Gamma)$ and $\cM(\Gamma')$.
In particular, these moduli spaces have the same number of points over different fields.  By considering $\bF_q$, this means the chromatic polynomials are equal at all prime powers, and thus equal.
\end{proof}

\begin{remark}
\begin{enumerate}
\item
It is natural to ask whether a Legendrian isotopy between $\leg$ and $\leg'$ implies that $\Gamma$ and $\Gamma'$ are equivalent as planar graphs.  Counterexamples have recently been obtained by Roger Casals and Emmy Murphy.  In particular Casals has constructed an infinite family of examples, the simplest of which are the following:
\begin{center}
\begin{tikzpicture}
\draw[thick] (0,0)--(0,2)--(4,1)--(0,0)--(.5,.5)--(.5,1.5)--(0,2)--(.5,1.5)--(1,1.3)--(1,.7)--(.5,.5)--(1,.7)--(1.5,.8)--(1.5,1.2)--(1,1.3)--(1.5,1.2)--(1.5,.8)--(2.7,1)--(1.5,1.2)--(2.7,1)--(4,1);
\end{tikzpicture}
\qquad\qquad
\begin{tikzpicture}
\draw[thick] (0,0)--(0,2)--(3,1)--(0,0)--(.4,.4)--(.6,.8)--(.6,1.2)--(.4,1.6)--(0,2)--(.4,1.6)--(.8,1.3)--(.6,1.2)--(.8,1.3)--(2,1)--(3,1)--(2,1)--(.8,.7)--(.6,.8)--(.8,.7)--(.4,.4);
\end{tikzpicture}
\end{center}
In general Casals's examples are obtained from the blow-up process of \S\ref{subsec:BUF}.  
\item  Dimitroglou Rizell \cite{Rizell} has constructed for each integer $g$ a family of $g+1$ Legendrian embeddings of a genus-$g$ surface into $S^5$, each with $g+1$ Reeb chords, no two of which are Legendrian isotopic.  The hyperelliptic Legendrians have $g+3$ Reeb chords apiece.  We do not know whether there is a literature on the number of different chromatic polynomials of planar graphs, but we suspect that Theorem \ref{thm:gamgamprime} shows that the number of pairwise distinct Legendrian surfaces in our family grows at least exponentially in $g$.
\end{enumerate}
\end{remark}

\subsection{Exact fillings of hyperelliptic Legendrians}

We have discussed a family of singular exact fillings and nonexact fillings in Section \ref{sec:foam}.  Here we can prove the following:

\begin{theorem}
\label{thm:nofillings}
Let $\leg\subset T^{\infty}\bR^3$ be the genus-$g$ Legendrian surface defined by a
simple, cubic planar graph $\Gamma$.  Then $\leg$ has no
smooth oriented graded exact Lagrangian fillings in $\bR^6.$
\end{theorem}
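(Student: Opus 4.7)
The plan is to run the argument sketched for Theorem \ref{intro-thm-2} in the introduction, now that the moduli space $\cM = \cM(\Gamma)$ from Section \ref{def:MGamma} and the chromatic polynomial count of Proposition \ref{intro-prop-1} are available. First I would argue that if $L \subset \bR^6$ is a smooth oriented graded exact Lagrangian filling of $\leg$, then $L$ is an oriented $3$-manifold with $\partial L = \leg$ of genus $g$, and the standard ``half lives, half dies'' argument applied to the long exact sequence of $(L,\partial L)$ gives $b_1(L) \geq g$. Orientability plus the fact that $L$ is Lagrangian in $\bR^6$ forces $w_2(L) = 0$, so $L$ admits a grading and spin structure.

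Next I would invoke the inverse microlocalization functor of \cite{NZ} (as implemented for fillings in \cite{STWZ}): a graded exact filling $L$ of $\leg$ produces a family of objects in $\cC(\leg)$ of microlocal rank one parametrized by local systems on $L$. Since local systems of rank one with values in $\bC^*$ form an algebraic torus of dimension $b_1(L) \geq g$, and since the construction is injective on isomorphism classes, we obtain an embedding
\[
(\Gm)^{\times g} \hookrightarrow \cM(\Gamma)
\]
of algebraic varieties defined over $\bZ$. Point counts of the source over $\bF_q$ therefore give a lower bound on point counts of $\cM(\Gamma)(\bF_q)$: at least $(q-1)^g = q^g - gq^{g-1} + O(q^{g-2})$ distinct $\bF_q$-points.

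The third step is to compare with the upper count from Proposition \ref{intro-prop-1}. For $\widehat\Gamma$ we have $n = g+3$ vertices and $e = 3g+3$ edges, so Birkhoff's expansion $P_G(x) = x^n - e x^{n-1} + O(x^{n-2})$ gives
\[
P_{\widehat\Gamma}(q+1) = (q+1)^{g+3} - (3g+3)(q+1)^{g+2} + O\bigl((q+1)^{g+1}\bigr).
\]
Dividing by $q^3 - q = q(q-1)(q+1)$ and expanding in descending powers of $q$, the leading terms are
\[
\#\cM(\Gamma)(\bF_q) = q^g - 2g\,q^{g-1} + O(q^{g-2}).
\]
Comparing with the torus chart's $q^g - g q^{g-1} + O(q^{g-2})$, the sub-leading coefficient of $\cM(\Gamma)(\bF_q)$ is strictly smaller, so for $q$ sufficiently large we get $(q-1)^g > \#\cM(\Gamma)(\bF_q)$. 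This contradicts the existence of the embedding, and hence of $L$.

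The main obstacle, and the only non-bookkeeping ingredient, is the second step: asserting that an exact Lagrangian filling really produces an embedded torus chart in $\cM$. The existence of the sheaf-theoretic family comes from \cite{NZ}, but I would want to be careful to (a) identify the resulting sheaves as lying in $\cC(\leg)$ (i.e., as having the correct singular support along $\leg$, not merely $\leg^+$), (b) check they have microlocal rank one so they give points of $\cM(\Gamma)$ rather than higher-rank moduli, and (c) verify that varying the local system on $L$ yields non-isomorphic sheaves, giving an injection from $\mathrm{Loc}_1(L) = \mathrm{Hom}(H_1(L,\bZ),\bC^*)$ into $\cM(\Gamma)$. Once these are in hand, the numerical comparison above closes the argument immediately.
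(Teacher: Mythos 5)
Your proposal is the paper's own argument, given in the introduction as the proof of Theorem \ref{intro-thm-2}: bound $b_1(L)\geq g$, use inverse microlocalization to embed a torus chart $(\Gm)^{\times g}\hookrightarrow\cM$, and compare $\bF_q$-point counts against the chromatic-polynomial formula of Proposition \ref{intro-prop-1}. Your caveats (a)--(c) about the torus chart are a fair flag of what the paper delegates to \cite{NZ} and \cite{STWZ}, but they do not change the substance of the argument.
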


We have given the proof already in the introduction.  Note that it is not strictly necessary for $\Gamma$ to be simple --- the proof works so long as the graph dual to $\Gamma$ has no multiple edges, or even if the number of such edges counted without multiplicity is at least $2g+4$.  We believe the ``graded'' condition can be removed, by using ungraded Floer groups in the construction of \cite{NZ} --- the coefficients of such ungraded groups must have characteristic $2$, but we can still appeal to the same properties of the chromatic polynomial by counting points over the fields $\bF_q$ for $q$ a power of $2$.

Theorem \ref{thm:nofillings} shows there are no smooth exact Lagrangians which define sheaf objects of $\cC(\Gamma).$
Nevertheless, the following proposition shows that the objects still behave cohomologically as though they were genus-$g$ handlebodies.

\begin{proposition}
Suppose $F \in \cC(\Gamma) \subset \Sh(\bR^3)$ has microlocal rank one.  Let us work over a field.  Then
\[
\dim \Ext^i(F,F) = \begin{cases}
1 & \text{if $i = 0$} \\
g & \text{if $i = 1$} \\
0 & \text{otherwise.}
\end{cases}
\]
\end{proposition}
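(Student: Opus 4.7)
The plan is to reduce the computation of $\Ext^\bullet(F,F)$ to a calculation for a star-shaped quiver. By \S\ref{subsec:concrete}, $\cC(\leg^+)$ is equivalent to the derived category of representations of the quiver $Q$ with $g+3$ sources --- one for each face $a$ of $\Gamma$ --- and a single sink $o$, and $\cC(\leg) \subset \cC(\leg^+)$ is a full subcategory cut out by the crossing condition \eqref{eq:crossing-condition}. Since this inclusion is fully faithful and the quiver equivalence is a dg-equivalence, $\Ext^\bullet(F,F)$ computed in $\cC(\leg)$ agrees with $\Ext^\bullet(F,F)$ computed in $\Rep(Q)$. Under the equivalence a microlocal rank-one object $F$ corresponds to the representation with $F(a) = \coeffs$, $F(o) = \coeffs^2$, and $\ell_a : F(a) \hookrightarrow F(o)$ the inclusion of a line, the crossing condition being $\ell_a \neq \ell_b$ in $\coeffs^2$ whenever faces $a,b$ share an edge.

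Because $Q$ is a tree, its path algebra is hereditary, so $\Ext^i(F,F) = 0$ for $i \geq 2$ automatically, and the surviving $\Ext$'s are the cohomology of the two-term complex
\[
\prod_a \mathrm{End}(F(a)) \,\oplus\, \mathrm{End}(F(o)) \;\xrightarrow{d}\; \prod_a \Hom(F(a), F(o)),
\qquad
d(\lambda_a, M) = (M\ell_a - \lambda_a \ell_a)_a,
\]
concentrated in degrees $0$ and $1$, whose source has dimension $(g+3)+4$ and whose target has dimension $2(g+3)$.

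I would then compute $\Ext^0 = \ker d$ directly. A pair $(\lambda_a, M)$ lies in $\ker d$ iff each $\ell_a$ is an eigenvector of $M$ with eigenvalue $\lambda_a$. The key geometric input is that at any vertex $v$ of $\Gamma$ the three incident faces are pairwise adjacent, so the three corresponding lines $\ell_a$ are pairwise distinct in $\coeffs^2$. A $2 \times 2$ matrix possessing three pairwise distinct eigenlines must be scalar: $M = cI$, which then forces $\lambda_a = c$ for every $a$. Hence $\Ext^0(F,F) = \coeffs$.

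Finally, $\dim \Ext^1(F,F)$ is pinned down by Euler characteristic:
\[
\dim \Ext^0 - \dim \Ext^1 \;=\; [(g+3)+4] - 2(g+3) \;=\; 1 - g,
\]
so $\dim \Ext^1(F,F) = g$, as desired. The only step with any substance is the eigenvector argument; the rest is a dimension count or a formal consequence of the fully-faithful embedding and hereditariness. The main subtlety requiring care is that the equivalence of \S\ref{subsec:concrete} is compatible with $\Ext$ groups, which is built into the construction there.
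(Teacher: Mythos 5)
Your proof is correct and follows the same strategy as the paper: reduce via \S\ref{subsec:concrete} to the star-shaped quiver, observe that a rank-one object is a representation with a $2$-dimensional space at the sink and a line $\ell_a$ at each source, and use the fact that at any vertex of $\Gamma$ the three incident lines are pairwise distinct to force endomorphisms to be scalar, giving $\dim\Ext^0 = 1$. The only genuine difference is in the $\Ext^1$ step: you deduce $\dim\Ext^1 = g$ from the Euler characteristic of the explicit two-term Hom complex of the (hereditary) quiver, whereas the paper parametrizes extension classes $0 \to F \to E \to F \to 0$ directly and identifies $\Ext^1(F,F)$ with the $g$-dimensional cokernel of a map $\mathrm{End}(V) \to k^{g+3}$. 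The two computations are equivalent — your Euler characteristic $1-g$ is precisely $\dim\Ext^0 - \dim\Ext^1$ for that complex — so this is a cleaner packaging of the same linear algebra rather than a different argument.
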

\begin{proof}
By \S\ref{subsec:concrete}, $\cC(\Gamma)$ is a full subcategory of the derived category of a simple quiver, with one sink and $g+3$ sources.   The objects of microlocal rank one are concentrated in homological degree zero, which implies that the Ext groups vanish for $i \notin \{0,1\}$.  As representations of the quiver, the objects of microlocal rank one have dimension $1$ along every source and dimension $2$ along every sink; furthermore each map from a source to a sink is an inclusion, and at least three of these maps have distinct images --- this implies that any endomorphism $F \to F$ must be a scalar.

The $\Ext^1$ calculation follows from the identification of $\Ext^1(F,F)$ with the tangent space to $F$ in $\cM$.  More concretely, a class in $\Ext^1(F,F)$ can be represented by a short exact sequence
\[
0 \to F \to E \to F \to 0
\]
If the stalk of $F$ in the middle region is a two-dimensional vector space $V$ and the stalks in the other regions are lines $L_i \subset V$, then we may assume that the stalk of $E$ in the middle region in $V \oplus V$, the stalk in the other regions is $L_i \oplus L_i$, and that the inclusion maps
$
L_i \oplus L_i \to V \oplus V
$
each have the form
\[
\left(
\begin{array}{rr} 
\mathrm{inc}_{L_i \to V}  & \phi_i \\
0 & \mathrm{inc}_{L_i \to V}
\end{array}
\right)
\]
where the $\phi_i$ are arbitrary linear maps $L_i \to V$.  The data $\{\phi_i\}$ and $\{\phi'_i\}$ represent equivalent extensions if there is a commutative diagram
\[
\xymatrix{
0 \ar[r] \ar[d]& F \ar[r] \ar[d]_{=}& E \ar[r] \ar[d]& F \ar[r]\ar[d]^{=} & 0 \ar[d]\\
0 \ar[r] & F \ar[r] & E' \ar[r] & F \ar[r] & 0 
}
\]
or equivalently, if there is a map $\psi_V:V \to V$ intertwining $\phi_i$ and $\phi'_i$.  In other words, $\Ext^1(F,F)$ is the $g$-dimensional cokernel of the map $\mathrm{End}(V) \to k^{g+3}$.
\end{proof}

\subsection{Period Domain}
\label{subsec:period-domain}

Let $\cP := H^1(\leg;\Gm)$ be the ``period domain'' of the surface $\leg$ --- the name is explained in the next section.  A basis for $H^1(\leg,\bZ)$ gives an identification $\cP \cong (\Gm)^{ 2g}$.  Here we give an alternative description of $\cP$ as a subtorus of $(\Gm)^{ e} = (\Gm)^{(3g+3)}$. 

Each edge $i$ of $\Gamma$ determines a loop on $\leg$, in the following way.  Let $U_i \subset S^2$ be a neighborhood of $i$:
\begin{center}
\includegraphics[scale = .25]{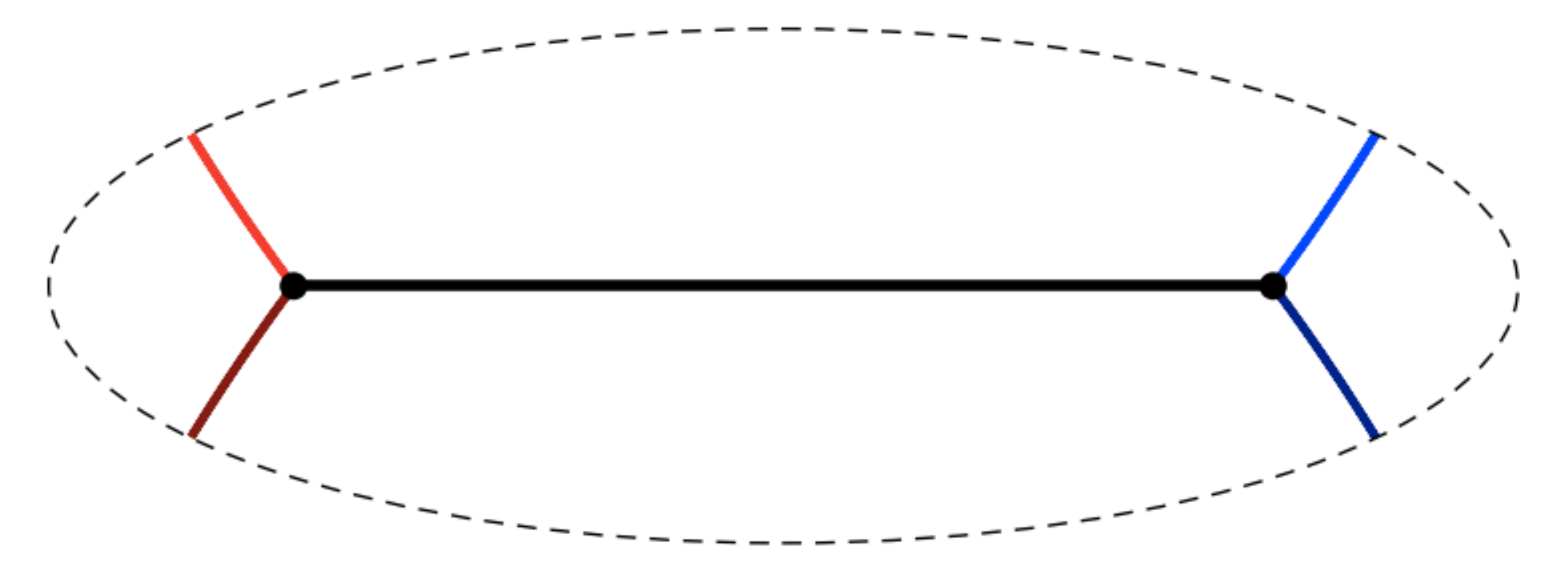} 
\end{center}
The preimage of $U_i$ in $\leg$ is an annulus:
\begin{center}
\includegraphics[scale = .25]{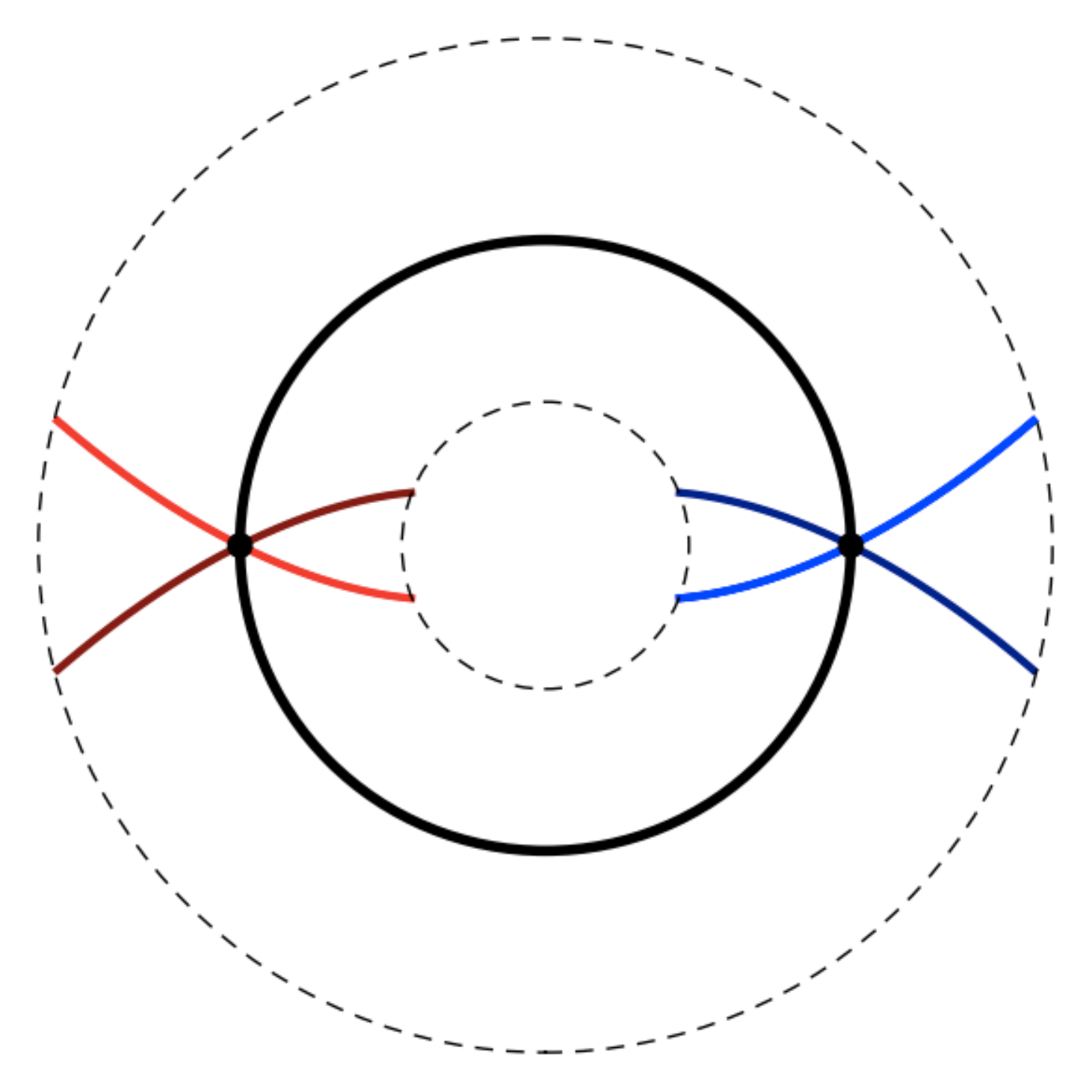}
\end{center}
The preimage of a loop around the edge is disconnected, but the two oriented components represent the same homology class in $H_1(\leg)$, which we denote by $\gamma_i$.  The $\gamma_i$ generate $H_1(\leg)$, with the relations
\begin{equation}
\label{eq:edgerelations}
\sum_{i \in \overline{f}} \gamma_i = 0
\end{equation}
where $\overline{f}$ a closed face, i.e. the closure of a connected component of $S^2 \setminus \Gamma$.
The intersection form $H_1(\leg) \otimes H_1(\leg) \to H_0(\leg) = \bZ$ can be computed from the ribbon structure (i.e. the cyclic ordering on the half-edges incident with a given vertex) on $\Gamma$, as follows:
\begin{equation}
\label{eq:omega-edges}
\langle e_1,e_2\rangle = \begin{cases}
0 & \text{if $e_1$ and $e_2$ are not incident,} \\
\\
1 & \text{if $e_1$ and $e_2$ are incident like \quad
\begin{tikzpicture}\node at (0,0) {$\bullet$}; \draw [thick] (-1,-.4)--(0,0)--(1,-.4); \draw[thick] (0,0)--(0,.8);  \node [above] at (-.6,-.25) {$e_1$}; \node [above] at (.6,-.25) {$e_2$}; \end{tikzpicture}}, \\
\\
-1 & \text{if $e_1$ and $e_2$ are incident like \quad \begin{tikzpicture} \node at (0,0) {$\bullet$}; \draw [thick] (-1,.4)--(0,0)--(1,.4); \draw[thick] (0,0)--(0,-.8); \node [below] at (-.6,.2) {$e_1$}; \node [below] at (.6,.2) {$e_2$}; \end{tikzpicture}} 
\end{cases}
\end{equation}

We regard $\cP$ as an algebraic torus, whose character lattice is $H_1(\leg,\bZ)$.  The intersection form on the character lattice is an element of
$\Hom(\bigwedge^2 H_1(\leg,\bZ),\bZ)$, but being nondegenerate it induces an element in $\bigwedge^2_{\bZ} (H_1(\leg,\bZ))$, which in turn induces a translation-invariant algebraic symplectic form on $\cP$.

\subsection{Period Map}
We parametrize $\Mfr$ in Definition \ref{def:MGamma} as an open subset of $(\bP^1)^{ (g+3)}$ and $\cM$ as the quotient $\Mfr/\PGL_2$.  In \S\ref{subsec:period-domain}, we defined an algebraic torus $\cP$, with a distinguished character $x_e:\cP \to \Gm$ for each edge of $\Gamma$.  We now define a map $\cM \to \cP$, by the formula
\begin{equation}
\label{eq:cross-ratio}
x_i(z \in \Mfr) = -\frac{z_b - z_a}{z_c-z_b}\cdot \frac{z_d-z_c}{z_a-z_d}
\end{equation}
when $a,b,c,d$ are the faces surrounding an edge $i$ in the following pattern:
\begin{center}
\begin{tikzpicture}
\node at (0,.65) {$c$};
\node at (1.75,0) {$b$};
\node at (0,-.65) {$a$};
\node at (-1.75,0) {$d$};
\draw (-2,1)--(-1,0)--(1,0)--(2,1);
\draw (-2,-1)--(-1,0);
\draw (2,-1)--(1,0);
\end{tikzpicture}
\end{center}
One easily verifies the relations $\prod_{e \in \overline{f}} x_e = 1$.  Shen has pointed out to us some additional relations which cut $\cM$ out of $\cP$ as a complete intersection.  Meanwhile there are several ways to see the following:

\begin{proposition}
$\varphi:\cM\to \cP$ is a Lagrangian embedding. 
\end{proposition}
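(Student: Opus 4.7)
The plan is to establish three things in order: (1) the formula \eqref{eq:cross-ratio} actually lands in the sub-torus $\cP \subset (\Gm)^e$, (2) the resulting map $\varphi$ is an embedding, and (3) it is isotropic (which is equivalent to Lagrangian by the dimension count $\dim \cM = g$ versus $\dim \cP = 2g$).

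For (1), I would verify the relations $\prod_{i \in \overline{f}} x_i = 1$ directly from \eqref{eq:cross-ratio}. Walking around the boundary of a face $\overline{f}$, the linear factors $(z_a - z_b)$ attached to one edge appear with the opposite exponent on an adjacent edge sharing that pair of faces, so the product telescopes; the signs $(-1)$ in \eqref{eq:cross-ratio} multiply to $(-1)^{|\overline{f}|}$, which is compensated by a consistent choice of orientation on $\gamma_i$ (already fixed in \S\ref{subsec:period-domain}). This is a short combinatorial verification.

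For (2), pick any vertex $v$ of $\Gamma$ and use the $\PGL_2$-action to normalize the three face labels $(z_a, z_b, z_c)$ meeting at $v$ to $(0, 1, \infty)$. Then any other face $d$ is joined to $\{a,b,c\}$ through a path in the planar dual $\widehat{\Gamma}$; crossing one edge at a time, the cross-ratio attached to that edge, together with the two face values already determined, uniquely recovers $z_d$ (this is simply solving \eqref{eq:cross-ratio} for the remaining variable). Connectedness of $\widehat{\Gamma}$ recovers the whole framed configuration, proving injectivity; differentiating the same inversion procedure gives injectivity of $d\varphi$, so $\varphi$ is an immersion, and hence an embedding.

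For (3), the cleanest route is to invoke the general results already cited in the introduction: $\cM$ is an open substack of the moduli of objects in the $2$-Calabi-Yau category $\cC(\leg^+)$ with a Lagrangian boundary condition provided by the Legendrian; the restriction to the cocircle boundary $\leg$ produces a Lagrangian map to the character variety, and $\varphi$ is the rank-one locus of this restriction, with target identified with $H^1(\leg, \Gm) = \cP$. This is the content of the ``Lagrangian boundary restriction'' theorem of Brav--Dyckerhoff \cite{BD} and Shende--Takeda \cite{ST}, applied here.

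The main obstacle, should one prefer a self-contained verification, is a direct combinatorial check of isotropy. Writing $\omega_\cP = \sum_{i,j} \langle \gamma_i, \gamma_j \rangle \, d\log x_i \wedge d\log x_j$ via \eqref{eq:omega-edges}, the intersection pairing decomposes as a sum of contributions, one for each vertex of $\Gamma$, where exactly three edges meet with a cyclic order producing the signs in \eqref{eq:omega-edges}. Pulling back $d\log x_i$ through \eqref{eq:cross-ratio} and grouping terms by shared vertex, I expect the required cancellation to be verifiable \emph{locally} at each vertex using only the cross-ratio identity for the four faces surrounding that vertex, since the only cross-ratios that share a log-differential are those whose defining edges share a face adjacent to that vertex. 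The sign bookkeeping (from the cyclic ordering of edges and the alternating sign pattern in $d\log x_i$) is the delicate part, but the computation is finite and local. Either approach succeeds; I would present the abstract argument and indicate the local combinatorial verification as an alternative.
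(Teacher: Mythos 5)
Your proposal takes a genuinely different route: the paper's own proof of this proposition is a one-line citation to Dimofte--Gabella--Goncharov (Section~2.3 and Theorem~4.2 of \cite{DGGo}), whereas you attempt a self-contained verification. Your part (1) is correct: the telescoping argument does establish $\prod_{e\in\overline{f}} x_e = 1$ (indeed the $(-1)^{|\overline f|}$ from the explicit signs is cancelled \emph{internally} by a second $(-1)^{|\overline f|}$ coming from the ratio $\prod_j(z_{a_j}-z_f)/\prod_j(z_f-z_{a_j})$, so there is no need to invoke an orientation convention on the $\gamma_i$ to absorb it as you suggest). Your part (3) invokes \cite{BD,ST}, which the paper itself cites in the introduction as the sheaf-theoretic source of Lagrangianicity, so that is a legitimate alternative to the \cite{DGGo} citation; the combinatorial isotropy check you sketch is plausible but not carried out. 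The one genuine slip is in part (2): the cross-ratio \eqref{eq:cross-ratio} attached to an edge involves \emph{four} face values, so recovering a new one requires knowing \emph{three}, not two, and the three you know are not in general arranged along a path in $\widehat\Gamma$ --- the faces $b,d$ at the two endpoints of the crossed edge are not consecutive on the dual path. The correct inductive statement exploits cubicness and walks the primal graph $\Gamma$: once all three faces at a vertex $v$ are known, then for any edge $e$ at $v$ three of the four faces around $e$ are already known (the two sides and the face at $v$ opposite $e$), so the fourth --- which is the new face at the other endpoint $v'$ of $e$ --- is determined; this gives all three faces at $v'$, and connectedness of $\Gamma$ finishes the induction. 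With that correction the embedding argument goes through, and the rest of your proposal is sound.
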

\begin{proof}
As mentioned in Section \ref{sec:priorwork}, an
identical moduli space and Lagrangian embedding was considered in the work of Dimofte-Gabella-Goncharov --- see, e.g., Section 2.3 and Theorem 4.2 of \cite{DGGo}.
\end{proof}

\subsection{Fillings and framings of the period domain}
Let $\cT$ be a tangle obtained by smoothing the $1$-skeleton of a foam, as in \S\ref{subsec:tangles}, and let $L := L(\cT)$ be the Lagrangian 3-manifold associated to $\cT$ in \S\ref{subsec:nonexact-fillings}.  In this section we will assume that $\cT$ has no circle components.  Then the identification of $\leg$ with the boundary of $L$ induces a projection $\pi:H_1(\leg,\bZ) \to H_1(L,\bZ)$ that we will call the \emph{phase} associated to $\cT$.  It is a surjection, and the kernel is identified by
Poincar\'e duality with $H^1(L,\bZ)$.  The phase therefore determines a short exact sequence
\begin{equation}
\label{eq:frame-ses}
\xymatrix{0 \ar[r]& H^1(L,\bZ) \ar[r]& H_1(\leg,\bZ) \ar[r]^{\pi} & H_1(L,\bZ) \ar[r] & 0} 
\end{equation}
The kernel is isotropic with respect to the intersection form on $\leg$. 
We define an \emph{OGW framing}, $\mathfrak{f}:H_1(L,\bZ) \to H_1(\leg,\bZ),$ to be a splitting of this short exact sequence with the same property,
i.e. a map
\begin{equation}
\label{eq:split-to-frame}
\mathfrak{f}: H_1(L,\bZ) \to H_1(\leg,\bZ) \quad \text{with $(\pi \circ \mathfrak{f})(x) = x$ and whose image is isotropic}
\end{equation}
Such an $\mathfrak{f}$ gives a decomposition $H_1(\leg,\bZ) \cong H_1(L,\bZ) \oplus H^1(L,\bZ)$ into dual isotropic subspaces.  We define $T_L := H^1(L,\Gm)$; it is a Lagrangian subspace of the period domain $\cP$ of \S\ref{subsec:period-domain}.
By applying $\Hom(-,\Gm)$ to the splitting \eqref{eq:split-to-frame}, we get a
symplectic covering map (essentially equivalent to the data of a framing $\mathfrak f$, so we reuse the notation)
\[
\mathfrak{f}:T^* (T_L) \to \cP.
\]
When we pull $\cM$ back along $\mathfrak{f}$, it looks
like the graph of a closed one-form whose antiderivative is a multiple-valued holomorphic function $W$ on $T_L$:
\[
\cM = {\rm Graph}(dW),\qquad W: T_L\to \bC.
\] 
In the examples we have checked, it is an integral linear combination of dilogarithms in natural coordinates defined by the phase and framing.
\begin{quote}
\emph{We conjecture that $W$ is the
generating function
for the genus-zero open Gromov-Witten invariants of $L$ in $\bR^6$}
\end{quote}
by analogy with the formulas
of \cite{AV,AKV} for AV branes, proven by Katz and Liu \cite{KL}. 

\begin{remark}
\label{rem:jake-sara}
The open Gromov-Witten invariants have not yet been
defined in this generality, and the role that our framings should play in the theory is still somewhat mysterious ---
but we understand from Jake Solomon that our treatment of framings is in line with expectations.
We further expect that, perhaps after choosing asymptotically radial tangles, the associated Lagrangians
will satisfy the anticipated requisite bounded-geometry requirements (see \cite{GS}) to ensure that the moduli spaces of disks are compact.
Solomon-Tukachinsky \cite[Section 1.2.6]{STdisk} have a project to study open Gromov-Witten theory in this setting.
\end{remark}

\subsection{Examples of OGW framings}

The Lagrangians $L(\cT)$ of \S\ref{subsec:nonexact-fillings} are determined topologically by a tangle in the 3-ball whose endpoints are on the vertices of $\Gamma$. In the diagrams below, we sketch in blue arcs the radial projection of the tangle onto $S^2$.  The brown line segments indicate generators of $H_1(L,\bZ)$ up to sign --- more precisely, the inverse image of each brown line under the double cover $L \to D^3$ is a circle, whose orientation we do not specify.  

\subsubsection{Tetrahedron}
\label{sec:tetrahedronframing}
If $\Gamma$ is the tetrahedron graph, $H_1(\leg,\bZ)$ is generated by $e_1,e_2,e_3$ subject to the relation $e_1 + e_2 + e_3 = 0$. 
Each $e_i$ labels a pair of opposite edges, equivalent under the relations, as in the following diagram.  
\begin{center}
\begin{tikzpicture}
\node at (-3,-9/4) {$\bullet$};
\node at (3,-9/4) {$\bullet$};
\node at (0,0) {$\bullet$};
\node at (0,3) {$\bullet$};
\draw [thick] (-3,-9/4)--(3,-9/4)--(0,3)--(-3,-9/4)--(0,0)--(0,3);
\draw [thick] (3,-9/4)--(0,0);
\node[red, below] at (0,-9/4){$e_1$};
\node[red, left] at (-1.5,1.25/2){$e_3$};
\node[red, right] at (1.5,1.25/2){$e_2$};
\node[red,right] at (0,9/8){$e_1$};
\node[red, below] at (1.25,-1){$e_3$};
\node[red, below] at (-1.25,-1){$e_2$};
\draw [ultra thick, blue, rounded corners] (0,3)--(.7,5/4)--(1.7,-2/4)--(3,-9/4);
\draw [ultra thick, blue, rounded corners] (-3,-9/4)--(-2.3,-6/4)--(-1.3,-3/4)--(0,0);
\draw [ultra thick, brown] (-1.3,-.75)--(-1.12,-.725);
\draw [ultra thick, brown] (-.82,-.71)--(.62,-.59);
\draw [ultra thick, brown] (0.86,-.57)--(1.7,-.5);
\end{tikzpicture}
\end{center}
The intersection form \eqref{eq:omega-edges} is given by $\omega(e_1,e_2) = \omega(e_2,e_3) = \omega(e_3,e_1) =  1$. 
The blue lines indicate a tangle in the interior of the tetrahedron, determining a phase.  In the phase pictured, the loop over $e_2$ (as an element of $H_1(\leg,\bZ)$) maps to zero in $H_1(L,\bZ)$.
More generally, each of the three generators $e_i$ determines a phase, and $H_1(L,\bZ)$ is the quotient of $H_1(\leg,\bZ)$ by $e_i$.

Here is a visualization of the map of lattices $H_1(\leg,\bZ) \to H_1(L,\bZ)$:
\begin{center}
\begin{tikzpicture}
\node at (0,0) {$0$};
\node at (-.5,.866) {$\bullet$};
\node at (.5,.866) {$e_1$};
\node at (0,2*.866) {$\bullet$};
\node at (1,0) {$\bullet$};
\node at (-1,0) {$e_2$};
\node at (-.5,-.866) {$\bullet$};
\node at (.5,-.866) {$e_3$};
\node at (0,-2*.866) {$\bullet$};
\node at (-1.5,-.866) {$\bullet$};
\node at (1.5,-.866) {$\bullet$};
\node at (-1.5,.866) {$\bullet$};
\node at (1.5,.866) {$\bullet$};
\node at (0,-2.5) {$H_1(\leg)$};
\node at (4,0) {$\bullet$};
\node at (4,.866) {$\overline{x}$};
\node at (4,-.866) {$\bullet$};
\node at (4,-2*.866) {$\vdots$};
\node at (4,2*.866) {$\vdots$};
\node at (4,-2.5) {$H_1(L)$};
\draw [thick,->] (2,0)--(3.5,0);
\end{tikzpicture}
\end{center}
The kernel of this map is naturally identified with $H^1(L,\bZ)$ --- in particular $H^1(L,\bZ)$ has a canonical generator, the edge $e_2$ in the picture.  This gives us a preferred orientation for the loop in $L$ that projects to the brown line segment, i.e. a preferred generator for $H_1(L,\bZ)  = \Hom(H^1(L,\bZ),\bZ)$.  In other words we choose the generator $\overline{x}$ for which
\begin{equation}
\label{eq:signconvention}
{\omega(x,e_2) >0}
\end{equation}
holds for any representative $x \in H_1(\leg,\bZ)$ projecting to $\overline{x}$.  The combinatorics of the situation gives us a distinguished choice of $x$, namely ${x = e_1}$.  (This is not typical for more general graphs.)  

When we think of $H^1(L,\bZ)$ as functions (homomorphisms) on $H_1(L,\Gm)$, we will write $e^v$ instead of $e_2$ for the canonical generator.  Similarly when considering it as a function on $H^1(S,\Gm)$, we write $e^u$ instead of $e_1$ for the lift of distinguished lift of the canonical generator of $H_1(L,\bZ)$.  

Any splitting of \eqref{eq:frame-ses}
is Lagrangian, so provides an OGW framing.  The map \eqref{eq:split-to-frame}  carries the canonical generator of $H_1(L,\bZ)$ to ${e_1 + p e_2}$; we write this as $e^{u +pv}$ when we think of it as a function on $\cP$, with $p = 0$ giving the distinguished splitting.  Then $v$ and ${e^{u+pv}}$ define coordinates on $T^*T_L.$

\subsubsection{Triangular prism}
\label{sec:tentframing}
For the triangular prism, the lattice $H_1(\leg,\bZ)$ has rank 4.  
\begin{center}
\begin{tikzpicture}
\node at (-3,-3.2) {$\bullet$};
\node at (3,-3.2) {$\bullet$};
\node at (4/3,-2) {$\bullet$};
\node at (-4/3,-2) {$\bullet$};
\node at (0,0) {$\bullet$};
\node at (0,2) {$\bullet$};
\node[red] at (.2,-2.2){$a$};
\node[red] at (-.5,-1){$b$};
\node[red] at (1,-1){$c$};
\node[red] at (.2,.9){$d$};
\node[red] at (-2,-2.3){$e$};
\node[red] at (2,-2.25){$f$};
\node[red] at (1.3,.2){$h$};
\node[red] at (-1.3,.2){$i$};
\node[red] at (0,-3.5){$g$};
\draw [ultra thick, blue, rounded corners] (-3,-3.2)--(-1,-2.7)--(0,-2.65)--(1,-2.7)--(2,-2.8)--(3,-3.2);
\draw [ultra thick, blue, rounded corners] (4/3,-2)--(.55,-1.2)--(0,0);
\draw [ultra thick, blue, rounded corners] (-4/3,-2)--(0,2);
\draw [ultra thick, brown] (-.75,-.25)--(.55,-1.2);
\draw [ultra thick, brown] (-1,-1)--(0,-2.65);
\draw [thick] (3,-3.2)--(-3,-3.2)--(0,2)--(3,-3.2)--(4/3,-2)--(-4/3,-2)--(0,0)--(0,2);
\draw [thick] (-3,-3.2)--(-4/3,-2);
\draw [thick] (0,0)--(4/3,-2);
\end{tikzpicture}
\end{center}

Inside of $H_1(\leg,\bZ)$ we can find the product of two triangular lattices --- one where $a+b+c = 0$ and one where $g+h+i = 0$.  In these coordinates,
the relations \eqref{eq:edgerelations} imply 
\[
d = a + g \qquad e = c + h \qquad f = b + i
\]
The vectors 
%$b,c,g,h$
\red{$b,c,h,i$}
make a basis for $H_1(\leg,\bZ)$, and the symplectic form on $\cP$ is given by 
%$db \wedge dc + dg \wedge dh.$
$db \wedge dc + dh \wedge di.$

Let $L$ be the branched double cover of the blue tangle in the diagram above.  We will see that OGW framings of $L$ are 
naturally indexed by $2 \times 2$ symmetric matrices with integer entries.  If we put 
%$u_1 = \log(b), u_2 = \log(g)$
$u_1 = \log(b), u_2 = \log(h)$
and 
%$v_1 = \log(c), v_2 = \log(h)$,
$v_1 = \log(c), v_2 = \log(i)$,
then each of these framings gives  an identification of the universal cover of $\cP$ with $T^* \bC^2$, which always has $v_1,v_2$ for momentum coordinates and
$u_s + M_s^t\, v_t$, $s, t = 1, 2$, i.e.
\[
u_1 + M_1^1 v_1 + M_1^2 v_2 \qquad u_2 + M_2^1 v_1 + M_2^2 v_2
\]
for position coordinates.

The two brown arcs lift to two loops in $L$, which is a handlebody of genus two.  Choosing an orientation for each of those loops gives a basis for $H_1(L,\bZ)$.  With respect to this basis for $H_1(L,\bZ)$, and the
%$(b,c,g,h)$
$(b,c,h,i)$
basis of $H_1(S,\bZ)$, the projection $H_1(S,\bZ) \to H_1(L,\bZ)$ is the matrix
%\begin{equation}
%\label{eq:neutron}
%\left(
%\begin{array}{rrrr}
%\epsilon_b & 0 & 0 & 0 \\
%0 & 0 & 0&  \epsilon_h
%\end{array}
%\right)
%\end{equation}
\begin{equation}
\label{eq:neutron}
\left(
\begin{array}{rrrr}
\epsilon_b & 0 & 0 & 0 \\
0 & 0 & \epsilon_h&  0
\end{array}
\right)
\end{equation}
where $\epsilon_b$ and $\epsilon_h$ are arbitrary signs.  
%It is convenient to take $\epsilon_b = 1$ and $\epsilon_h = -1$.
Let us put $\epsilon_b = \epsilon_h = 1.$

A splitting of the map $H_1(\leg,\bZ) \to H_1(L,\bZ)$ is given by a $4 \times 2$ integer matrix whose product with \eqref{eq:neutron} is the $2 \times 2$ identity matrix, it's general form is
%\[
%\left(
%\begin{array}{rr}
%1 & 0 \\
%\alpha & \beta \\
%\gamma & \delta \\
%0 & -1
%\end{array}
%\right) 
%\]
\[
\left(
\begin{array}{rr}
1 & 0 \\
\alpha & \beta \\
0 & 1\\
\gamma & \delta
\end{array}
\right) 
\]
The splitting is Lagrangian if the symplectic pairing of the two columns is zero, i.e. if {$\beta = \gamma$.}
The framing matrix is therefore $M = {\alpha \,\beta \choose \beta \,\delta}.$
The open Gromov-Witten invariants for this example will be discussed in Section \ref{sec:tent}.

\section{Computations, Examples}
\label{sec:exs}

In this section, we choose some graphs $\Gamma$ and compute superpotentials $W: T_L \to \bC$.
We also derive a blow-up formula relating moduli spaces a graph and its blow-up,
but we begin immediately below with the fundamental example:  the tetrahedron.

\subsection{Tetrahedron}
\label{sec:tetrahedron}

In the case of a tetrahedron, we recover the results of \cite{AV,AKV,N-cone}.

We begin by choosing an OGW framing, so we continue with the choice from Section \ref{sec:tetrahedronframing}.
The figure below includes the face coordinates $z_i$ for the framed moduli space.
\begin{center}
\begin{tikzpicture}
\node at (-3,-9/4) {$\bullet$};
\node at (3,-9/4) {$\bullet$};
\node at (0,0) {$\bullet$};
\node at (0,3) {$\bullet$};
\draw [thick] (-3,-9/4)--(3,-9/4)--(0,3)--(-3,-9/4)--(0,0)--(0,3);
\draw [thick] (3,-9/4)--(0,0);
\node[red, below] at (0,-9/4){$e_1$};
\node[red, left] at (-1.5,1.25/2){$e_3$};
\node[red, right] at (1.5,1.25/2){$e_2$};
\node[red,right] at (0,9/8){$e_1$};
\node[red, below] at (1.25,-1){$e_3$};
\node[red, below] at (-1.25,-1){$e_2$};
\draw [ultra thick, blue, rounded corners] (0,3)--(.7,5/4)--(1.7,-2/4)--(3,-9/4);
\draw [ultra thick, blue, rounded corners] (-3,-9/4)--(-2.3,-6/4)--(-1.3,-3/4)--(0,0);
\draw [ultra thick, brown] (-1.3,-.75)--(-1.12,-.725);
\draw [ultra thick, brown] (-.82,-.71)--(.62,-.59);
\draw [ultra thick, brown] (0.86,-.57)--(1.7,-.5);
\node[blue] at (2,1.2){$z_0$};
\node[blue] at (-.8,.4){$z_2$};
\node[blue] at (.7,.4){$z_3$};
\node[blue] at (0,-1.35){$z_1$};
\end{tikzpicture}
\end{center}
A point of $\cM$ is a $\PGL_2$-orbit of quadruples $(z_0,z_1,z_2,z_3)$ which are pairwise distinct.  To compute the image of the period map \eqref{eq:cross-ratio}, we may assume $z_0 = 0, z_1 = 1, z_2 = \infty$, and $z_3 = z$.  Then we find that $\cM$ is parametrized by
\[
(x_1,x_2,x_3) = \left(
\frac{z}{1-z},z-1,\frac{-1}{z}
\right).
\]

The period domain $\cP \subset (\bC^*)^{\times 3}$ is defined by the face relation $x_1 x_2 x_3 = 1,$
and $\cM$ is cut out of $\cP$ by the further equation
${1 + x_2 + x_1 x_2 = 0}$. 
If we identify $\cP$ with $(\bC^*)^2 = \{(x,y)\}$ using coordinates $x = -x_3$ and $y = -x_2x_3$, then  $\cM$ is the pair of pants 
\begin{equation}
\label{eq:popmod}
x + y = 1,
\end{equation}

We take the OGW framing from Section \ref{sec:tetrahedronframing}.
We regard $e^v$ and $e^{u-pv}$ as $\bC^*$-valued functions on $\cP$, with $e^v$ cutting out $T_L$.  
Recall that $e^v = e_2 = x_2$ while ${e^u = e_1 = x_1}$.
The canonical 1-form in this framing is given by $v d(u-pv)$.

Set $$V = -e^{-v}, \qquad U = -(-1)^pe^{u+pv}.$$  
The choice of sign in front is a kind of mirror map --- see Section 6.1 of \cite{AKV}.  It is the $\epsilon_f$ of \eqref{eq:xmasW}.
Write the defining equation as
$1 + \frac{1}{x_2} + x_1 = 0$.
Then  $U$ and $V$ obey
\[
UV^p + V = 1.
\]
The one-form is taken to be $-\log{V}d\log{U},$
a shift of $i\pi d\log{U}$ from $vd\log{U}$ due to the mirror map, for which we have no good explanation.
(Similar sign choices and shifts will be presented without comment in later examples.)

Let us consider the canonical ($p=0$) framing.  Then since we can solve for $V = 1 - U$, we
have for the one-form
\[
-\log(1 - U) U^{-1} dU = \sum_{n = 0}^{\infty} \frac{1}{n} U^{n-1}d U = d\left(\sum_{n = 0}^{\infty} \frac{1}{n^2} U^{n}\right) =  d\Li_2(U).
\]
The conjectural open Gromov-Witten generating function is therefore $W^{(p=0)} = \Li_2(U).$
It obeys the integrality condition, Equation \ref{eq:ovintegrality}.

For a general OGW framing labeled by $p$,
the corresponding open Gromov-Witten invariants have generating function $W^{(p)}$ precisely as in
the work of Aganagic-Klemm-Vafa --- see Section 6.1 of \cite{AKV}.

Many examples and moduli spaces can be related to the tetrahedron case by means of a blow-up formula,
which we now describe.

\subsection{Blow-up Formula}
\label{subsec:BUF}

Let $\Gamma$ be a simple cubic planar graph, $\leg$ the corresponding Legendrian, $\cM$ the moduli space and $\cP$ the period domain.
Let $v\in \Gamma$ be a vertex.  The blow-up of $\Gamma$ at $v$ is a new graph $\Gamma'$ constructed from $\Gamma$ by a local
modification:  $v$ is replaced by a small ``exceptional'' triangle with vertices
connected to the edges incident to $v$ (see picture below).
Let $\Gamma'$ be the blow-up of $\Gamma$ at $v$, and define $\leg',$ $\cM'$ and $\cP'$ respectively.

\begin{center}
\begin{tikzpicture}
\node at (0,0) {$\bullet$};
\node at (.3,.1) {$v$};
\node at (0,5/3) {$\bullet$};
\node at (5/3,-1) {$\bullet$};
\node at (-5/3,-1) {$\bullet$};
\draw[thick] (0,0)--(0,5/3);
\draw[thick] (0,0)--(5/3,-1);
\draw[thick] (0,0)--(-5/3,-1);
\end{tikzpicture}
\qquad\qquad
\begin{tikzpicture}
\node at (0,5/3) {$\bullet$};
\node at (5/3,-1) {$\bullet$};
\node at (-5/3,-1) {$\bullet$};
\node at (0,5/9) {$\bullet$};
\node at (5/9,-1/3) {$\bullet$};
\node at (-5/9,-1/3) {$\bullet$};
\draw[thick] (0,5/9)--(5/9,-1/3);
\draw[thick] (5/9,-1/3)--(-5/9,-1/3);
\draw[thick] (-5/9,-1/3)--(0,5/9);
\draw[thick] (0,5/9)--(0,5/3);
\draw[thick] (5/9,-1/3)--(5/3,-1);
\draw[thick] (-5/9,-1/3)--(-5/3,-1);
\end{tikzpicture}
\vskip 0.1in
Blowing up at a vertex $v$
\end{center}
\vskip 0.1in

\begin{proposition}
\label{prop:blowup}
$\cP'$ has a symplectic decomposition $\cP' \cong \cP \times (\bC^*)^2$
with respect to which $\cM'\subset \cP'$ is a Lagrangian product $\cM\times H$,
where $H\subset (\bC^*)^2$ is the pair of pants $x+y=1.$
\end{proposition}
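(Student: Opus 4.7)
The plan is to match the decomposition $\cP' \cong \cP \times (\Gm)^2$ with a factorization $\cM' \cong \cM \times H$ obtained by forgetting the color of the new triangular face.

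First I would establish the period-domain decomposition topologically. Near the blown-up vertex, $\leg'$ replaces the branched double cover of a small disk around $v$ (itself a disk) with the branched double cover of a disk containing the three new branch vertices $T_1, T_2, T_3$; by Riemann--Hurwitz (Euler characteristic $2 - 3 = -1$, with a single boundary circle because the monodromy around three branch points is nontrivial) the latter is a once-punctured torus. Hence $\leg' \cong \leg\,\#\,T^2$, so
\[
H_1(\leg'; \bZ) \cong H_1(\leg; \bZ) \oplus H_1(T^2; \bZ)
\]
as symplectic lattices, and applying $\Hom(-, \Gm)$ yields the desired symplectic decomposition $\cP' \cong \cP \times (\Gm)^2$, with the $(\Gm)^2$ factor dual to $H_1(T^2; \bZ)$.

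Next I would construct the moduli factorization. The faces of $\Gamma'$ are in bijection with the faces of $\Gamma$ together with the new triangle $T$. Forgetting the color of $T$ defines a $\PGL_2$-equivariant surjection $\Mfr(\Gamma') \to \Mfr(\Gamma)$ whose fiber over a framed coloring $(z_\bullet)$ is $\bP^1 \setminus \{z_a, z_b, z_c\}$, where $a, b, c$ are the three faces of $\Gamma$ meeting at $v$. Passing to the $\PGL_2$-quotient and trivializing the fiber by the cross-ratio of $(z_T, z_a, z_b, z_c)$ produces an isomorphism of varieties $\cM(\Gamma') \cong \cM(\Gamma) \times H$, with $H = \{x+y=1\}$ the pair of pants of \eqref{eq:popmod}.

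The remaining step---and the main obstacle---is to verify that these two decompositions are compatible, i.e. that the period map $\varphi' : \cM' \to \cP'$ splits as the product $(\cM \to \cP) \times (H \hookrightarrow (\Gm)^2)$. By \eqref{eq:cross-ratio}, the three new triangle-edge coordinates $x_{T_iT_j}$ in $\cP'$ depend only on $(z_a, z_b, z_c, z_T)$ and, modulo the triangle face relation $x_{T_1T_2}x_{T_2T_3}x_{T_3T_1}=1$, cut out precisely $H$ inside the $(\Gm)^2$ factor. For an edge of $\Gamma'$ away from $v$ the four adjacent faces coincide with those in $\Gamma$, so its coordinate is unchanged; for an old edge $e_i$ at $v$ one of its four adjacent faces is $T$ instead of a face of $\Gamma$, so the $\cP'$-coordinate differs from the corresponding $\cP$-coordinate by a monomial in the triangle-edge coordinates. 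These correction monomials are uniquely determined by comparing the face relations \eqref{eq:edgerelations} for $a, b, c$ in $\Gamma$ versus in $\Gamma'$, and they are exactly what realizes the symplectic orthogonal splitting established in Step~1. Once this bookkeeping is carried out, the Lagrangian conclusion is automatic: $\cM \subset \cP$ is Lagrangian by the previous proposition, and $H \subset (\Gm)^2$ is Lagrangian because $x+y=1$ forces $dx+dy=0$, whence $d\log x \wedge d\log y \vert_H = 0$.
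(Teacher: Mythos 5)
Your outline is sound and Steps 1 and 2 give a legitimate and more conceptual alternative to the paper's approach: the paper works purely combinatorially with edge lattices (defining an explicit embedding $i\colon N\to N'$, $e\mapsto e-e'$ for $e$ incident to $v$, and checking orthogonality of $i(N)$ against $N_0$ case by case), whereas you deduce the symplectic splitting from the connect-sum $\leg'\cong\leg\,\#\,T^2$, which is cleaner. Your Step 2 is likewise in the spirit of the paper. One remark on Step 1: the $H_1(\leg)$ summand in $H_1(\leg')$ is \emph{not} represented by taking the ``same'' edge loop $\gamma_e$ for $e$ incident to $v$ (that loop encircles the new triangle vertex $T_i$ and therefore pairs nontrivially with an exceptional loop), so identifying the $H_1(\leg)$-factor concretely in terms of edge loops is exactly what the paper's $i(e)=e-e'$ does; this matters for Step 3.

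The gap is in Step 3. You claim the correction monomials ``are uniquely determined by comparing the face relations,'' but the face relations $\prod_{e\in\partial f}x_e=1$ are \emph{consequences} of the period map, not defining equations for it: they constrain ratios of coordinates but do not determine the cross-ratio $x_e$ itself, so they cannot by themselves pin down the factor $x_e(\Gamma')/x_e(\Gamma)$, nor even show it is a monomial in the triangle coordinates. What is actually needed is the direct computation the paper performs --- substituting the cross-ratio formula \eqref{eq:cross-ratio} at each edge $e$ incident to $v$ and verifying that $x_e^{\Gamma}=-x_{i(e)}^{\Gamma'}$, where $i(e)=e-e'$ with $e'$ the opposite exceptional edge. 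Without that computation, ``once this bookkeeping is carried out, the Lagrangian conclusion is automatic'' is a promissory note for the very content of the proposition. If you carry out the cross-ratio comparison explicitly (it is a short calculation), your argument closes; as written, the essential compatibility of the two decompositions is asserted rather than proved.
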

\begin{proof}
Let $E$ be the set of edges of $\Gamma$ and $E'$ the set of edges of $\Gamma'$.
Note $E\subset E',$ where edges of $\Gamma$ incident to $v$ are mapped to their obvious counterparts (proper transforms) in $\Gamma'.$
Let $N := \bZ\langle E\rangle$ and $N' := \bZ\langle E'\rangle$ be the respective edge lattices, each endowed with their induced intersection forms $A$ and $A'$
from Equation \eqref{eq:omega-edges}.
We define an inclusion $i:N\to N'$ as follows.
For an edge $e\in E$ not incident to $v$, $i(e) = e.$  For $e$ incident to $v$ put $i(e) = e - e',$ where $e'$ is the unique exceptional edge \emph{not}
adjacent to $e$.  The inclusion $i$ induces a map of the same name, $i: N\to N'$.
Put $N_0$ for the lattice generated by the three exceptional edges.
Then a simple case-by-case check shows that
\begin{center}
$N'\cong i(N) \oplus N_0$ is an orthogonal decomposition with respect to $A'$,
and $A'\vert_{i(N)} \cong A$
\end{center}
Now $i^*$ induces a map $j: Hom(E',\bC^*)\to Hom(E,\bC^*)$.  Let $\omega'$ and $\omega$ be the
two-forms on these spaces defined by $A'$ and $A.$  It follows that these forms are related by
$\omega' = j^*\omega + \omega_0$
where $\omega_0$ is $\sum_{i=1}^{3} \frac{dx_i}{x_i}\wedge \frac{dx_j}{x_j},$ the sum taken over the three exceptional divisors.
These $x_i$ obey $\prod_{i=1}^3 x_i = 1$ and as in Equation \ref{eq:popmod} parametrize a pair of pants.
That these pants split as a Cartesian factor follows from the observation (by direct calculation) that for $e \in E$ we have 
$x_e = -x_{i(e)}.$
\end{proof}

\begin{remark}[Tetrahedron, revisited]

The tetrahedron graph is the blow-up of the ``$\Theta$'' graph, the unique (non-simple) planar graph with two vertices and three edges.
The $\Theta$ graph has zero symplectic form and moduli space equal to a point, so Proposition \ref{prop:blowup} establishes that the moduli space for the tetrahedron,
which corresponds to the Aganagic-Vafa brane,
is a pair of pants --- as we have already seen in Section \ref{sec:tetrahedron} above.

\end{remark}

\subsection{Triangular Prism}
\label{sec:tent}
The triangular prism is the blow-up of the tetrahedron graph at any vertex.  We label the edges as in Section \ref{sec:tentframing}.
\begin{figure}[H]
\begin{tikzpicture}
\node at (-3,-3.2) {$\bullet$};
\node at (3,-3.2) {$\bullet$};
\node at (4/3,-2) {$\bullet$};
\node at (-4/3,-2) {$\bullet$};
\node at (0,0) {$\bullet$};
\node at (0,2) {$\bullet$};
\node[red] at (0,-2.2){$a$};
\node[red] at (-0.85,-.97){$b$};
\node[red] at (.95,-1){$c$};
\node[red] at (.2,.9){$d$};
\node[red] at (-2,-2.3){$e$};
\node[red] at (2,-2.25){$f$};
\node[red] at (1.3,.2){$h$};
\node[red] at (-1.3,.2){$i$};
\node[red] at (0,-3.5){$g$};
\node[blue] at (0,-1.3){$x$};
\node[blue] at (2.4,.2){$y$};
\node[blue] at (-1.05,-.4){$z_2$};
\node[blue] at (.9,-.4){$z_3$};
\node[blue] at (0,-2.9){$z_1$};
\draw [ultra thick, blue, rounded corners] (-3,-3.2)--(-1,-2.7)--(0,-2.65)--(1,-2.7)--(2,-2.8)--(3,-3.2);
\draw [ultra thick, blue, rounded corners] (4/3,-2)--(.55,-1.2)--(0,0);
\draw [ultra thick, blue, rounded corners] (-4/3,-2)--(0,2);
\draw [ultra thick, brown] (-.75,-.25)--(.55,-1.2);
\draw [ultra thick, brown] (-1,-1)--(0,-2.65);
\draw [thick] (3,-3.2)--(-3,-3.2)--(0,2)--(3,-3.2)--(4/3,-2)--(-4/3,-2)--(0,0)--(0,2);
\draw [thick] (-3,-3.2)--(-4/3,-2);
\draw [thick] (0,0)--(4/3,-2);
\end{tikzpicture}
\caption{The blue arcs denote the tangle. The brown arcs generate $H_1(L,\bZ).$}
\label{fig:tetframe}
\end{figure}
Using the blow-up procedure, we confirm that the intersection form in the basis
$$\{a, b, c, g, h, i, d - a - g, e - c- h, f - b - i\}$$
is
$$H\oplus H \oplus 0,\qquad H = {\tiny \begin{pmatrix}0&1&-1\\ -1&0&1\\ 1&-1&0\end{pmatrix}}$$

Let's write down the period maps.
$$\begin{array}{ccc}
x_a = -\frac{x-z_2}{z_2-z_1}\cdot \frac{z_1-z_3}{z_3-x}&
x_d = -\frac{z_2-x}{x-z_3}\cdot \frac{z_3-y}{y-z_2}&
x_g = -\frac{z_1-z_2}{z_2-y}\cdot \frac{y-z_3}{z_3-z_1}\\
x_b = -\frac{x-z_3}{z_3-z_2}\cdot \frac{z_2-z_1}{z_1-x}&
x_e = -\frac{z_1-x}{x-z_2}\cdot \frac{z_2-y}{y-z_1}&
x_h = -\frac{z_3-z_1}{z_1-y}\cdot \frac{y-z_2}{z_2-z_3}\\
x_c = -\frac{x-z_1}{z_1-z_3}\cdot \frac{z_3-z_2}{z_2-x}&
x_f = -\frac{z_1-y}{y-z_3}\cdot \frac{z_3-x}{x-z_1}&
x_i = -\frac{z_2-z_3}{z_3-y}\cdot \frac{y-z_1}{z_1-z_2}\\
\end{array}
$$
With the blow-up basis as our guide, we note the following relations.
$$x_a x_b x_c = 1,\qquad 1 + x_c + x_bx_c = 0$$
$$x_g x_h x_i = 1,\qquad 1 + x_h + x_g x_h = 0$$
$$\frac{x_d}{x_a x_g} = -1,\qquad \frac{x_e}{x_c x_h} = -1 \qquad \frac{x_f}{x_b x_i} = -1$$

We see that the intersection form is nondegenerate on $H_1(\leg,\bZ)$ generated by $\{b,c,g,h\}$
and the image of the period map is the Cartesian product in $(\bC^*)^4$ of two pairs of pants.

Let us continue our analysis by picking up from Section \ref{sec:tentframing}, where we considered an OGW phase and
family of framings described by the framing matrix
$M = {\alpha\,\beta\choose\beta\,\delta}$.  We begin by considering ``zero framing," $M=0.$
Proceeding by analogy with Section \ref{sec:tetrahedron},
define the corresponding coordinates as follows.  Put
$$U_1 = {-x_b = -e^{u_1}}, \quad V_1 = -{\frac{1}{x_c} = -e^{-v_1}};
%\qquad U_2 = {-x_g = -e^{u_2}},\quad V_2 = {-\frac{1}{x_h} = -e^{-v_2}}.$$ 
\qquad U_2 = {-x_h = -e^{u_2}},\quad V_2 = {-\frac{1}{x_i} = -e^{-v_2}}.$$
%\red{XXX---- Remove this:  (We should perhaps redefine $U_2\to U_2^{-1}$ in light of the sign of $\epsilon_h$ chosen in Section \ref{sec:tentframing},
%but this only affects $\partial_{\log (U_2)}$ through an overall sign.) XXX}
Then
\begin{equation}
\label{eq:twopants}
U_1 + V_1 = 1,\qquad U_2 + V_2 = 1.
\end{equation}
The symplectic form is $\sum_i d\log(V_i) \wedge d\log(U_i).$
The moduli space is Lagrangian and can be written as the graph of $dW(U_1,U_2).$
That is,
we can solve the equation $-\log(V_i) = \partial_{\log(U_i)}W$ for $W.$
To do so, solve the defining equation for $-\log(V_i) = -\log\left(1 - U_i\right),$ which gives
$$W = \Li_2(U_1) + \Li_2(U_2).$$

We now study how $W$ changes for different framings.
Make the change of coordinates $U_1 \to U_1 V_1^\alpha V_2^\beta (-1)^\alpha,$  $U_2 \to U_2V_1^\beta V_2^\delta,$ with
$V_i$ unchanged, so that
Equation \ref{eq:twopants} now reads 
$$U_1V_1^\alpha V_2^\beta (-1)^\alpha + V_1 = 1,\qquad U_2 V_1^\beta V_2^\delta (-1)^\delta + V_2 = 1.$$
We then try to write $-\log(V_i) = \partial_{\log(U_i)}W(U).$  The equations define a new function $W(U)$ for each choice of $M$.
If $M$ is diagonal, $M = {\rm diag}(\alpha,\delta),$ then the equations above decouple and $W = W^{(\alpha)}(x_1) + W^{(\delta)}(x_2)$,
where $W^{(p)}$ is the framing-$p$ superpotential as in Section \ref{sec:tetrahedron}.

Let us investigate some non-diagonal framings, $M$.  Consider the family $M = {0\; p\choose p\; 0}$.
When $p = 1$ we can solve the equations for the $V_i$.
The equations are
$$U_1 V_2 + V_1 = 1,\qquad U_2 V_1 + V_2 = 1.$$
We can solve
$$-\log{V_1} =  -\log(1-U_1) +\log(1-U_1U_2) ,\qquad -\log(V_2) = -\log(1-U_2) + \log(1-U_1U_2).$$
Putting $-\log(V_i) = \partial_{\log(U_i)}W(U)$ gives
$$W = \Li_2(U_1) + \Li_2(U_2) - \Li_2(U_1U_2),$$
an integral linear combination of dilagarithms with arguments labeled by $H_1(L,\bZ)$, as expected --- see Equation \eqref{eq:ovintegrality}.
When $p = -1,$ the equations for the $y_i$ are quadratic, and an exact solution for $W$ seems out of reach.
We can instead develop a power series solution $W = \sum_{a,b} K_{a,b} U_1^a U_2^b$, solve for the conjectural
open Gromov-Witten invariants $K_{a,b}$.  We then want to check that they define integer conjectural BPS numbers after
accounting for $d$-fold covers with the $1/d^2$ multiple-cover formula of Ooguri-Vafa
--- equivalently, check that to a specified order $W$ has the form
$$W = \sum_{d = (d_1,d_2)} a(d) \,\Li_2(U_1^{d_1} U_2^{d_2})$$
with $a(d)$ integers, where $d_1$ and $d_2$ refer to homology classes corresponding to the upper and lower brown arcs of Figure \ref{fig:tetframe},
respectively (and we have suppressed the dependence on framing in the notation $a(d)$).
We find for $a(d)$ the following numbers.

\vskip0.1in
\begin{center}
\begin{tabular}{|c||c|c|c|c|c|c|c|c|c|c|}
\hline
$d_1\; \backslash \;d_2$&0&1&2&3&4&5&6&7&8&9\\
\hline
\hline
0&0&1&0&0&0&0&0&0&0&0\\
1&1&1&1&1&1&1&1&1&1&\\
2&0&1&2&4&6&9&12&16&&\\
3&0&1&4&11&25&49&87&&&\\
4&0&1&6&25&76&196&&&&\\
5&0&1&9&49&196&&&&&\\
6&0&1&12&87&&&&&&\\
7&0&1&16&&&&&&&\\
8&0&1&&&&&&&&\\
9&0&&&&&&&&&\\
\hline
\end{tabular}
\end{center}
\vskip0.1in

We have written a computer code to implement this procedure, and integrality has been verified in
all of the hundreds of examples checked.

\subsection{The Cube}

The $1$-skeleton of a cube is not obtained from the ``blow-up'' construction, and 
the moduli space is not a product of pairs of pants, so presents an interesting new test of our methods.

\begin{center}
\begin{tikzpicture}
\node at (-1,-1) {$\bullet$};
\node at (1,-1) {$\bullet$};
\node at (1,1) {$\bullet$};
\node at (-1,1) {$\bullet$};
\node at (-2,-2) {$\bullet$};
\node at (2,-2) {$\bullet$};
\node at (2,2) {$\bullet$};
\node at (-2,2) {$\bullet$};
\draw [thick] (-2,-2)--(2,-2)--(2,2)--(-2,2)--(-2,-2)--(-1,-1)--(1,-1)--(1,1)--(-1,1)--(-1,-1);
\draw [thick] (2,-2)--(1,-1);
\draw [thick] (2,2)--(1,1);
\draw [thick] (-2,2)--(-1,1);
\node[red] at (0,-2){$5$};
\node[red] at (2,0){$6$};
\node[red] at (0,2.05){$7$};
\node[red] at (-2,0){$8$};
\node[red] at (0,-1){$1$};
\node[red] at (1,0){$2$};
\node[red] at (0,1){$3$};
\node[red] at (-1,0){$4$};
\node[red] at (-1.5,--1.5){$12$};
\node[red] at (1.5,-1.5){$10$};
\node[red] at (1.5,1.5){$11$};
\node[red] at (-1.5,-1.5){$9$};
\node[blue] at (0,-1.5){$w$};
\node[blue] at (0,1.5){$y$};
\node[blue] at (-1.5,0){$z$};
\node[blue] at (1.5,0){$x$};
\node[blue] at (0,0){$u$};
\node[blue] at (2.7,1.2){$v$};
\draw [ultra thick,blue,rounded corners] (2,-2)--(1.8,-1)--(1.8,1)--(2,2);
\draw [ultra thick,blue,rounded corners] (-2,-2)--(-1.9,-1.6)--(-1.4,-1.1)--(-1,-1);
\draw [ultra thick,blue,rounded corners] (-1,1)--(-.6,1.2)--(.6,1.2)--(1,1);
\draw [ultra thick,blue,rounded corners] (-2,2)--(-1.9,1.6)--(-1.1,.6);
\draw [ultra thick,blue,rounded corners] (-.9,.35)--(-.344,-.344)--(.7,-.9)--(1,-1);
\draw [ultra thick,brown,rounded corners] (-1.4,-1.1)--(-1.3,-.8)--(-1.1,-.7);
\draw [ultra thick,brown,rounded corners] (-.9,-.6)--(-.344,-.344);
\draw [ultra thick,brown,rounded corners] (.7,-.9)--(.6,.15)--(.6,.93);
\draw [ultra thick,brown,rounded corners] (.6,1.05)--(.6,1.2);
\draw [ultra thick,brown,rounded corners] (-1.9,1.6)--(-1.8,1.7);
\draw [ultra thick,brown,rounded corners] (-1.7,1.8)--(-1.5,1.85);
\draw [ultra thick,brown,rounded corners] (-1.5,1.85)--(1.5,1.85)--(1.66,1.77);
\draw [ultra thick,brown,rounded corners] (1.8,1.7)--(1.92,1.6);
\end{tikzpicture}
\end{center}

We choose the following basis for $H^1(\leg,\bZ)$
$$\{-(e_1 + e_3), e_9; -e_2, e_3; -e_7,e_6\},$$
in which the symplectic form is standard.
The phase is evident from the blue tangle, which determines the kernel of the map $H_1(\leg,\bZ)\to H_1(L,\bZ)$
to be $e_9 =: e^{v_1}, e_3 =: e^{v_2}, e_6 = : e^{v_3}.$
In choosing this basis we have also selected a lift of the brown generators of $H_1(L,\bZ),$ namely $e_1+e_3 =: e^{u_1},
e_2 =: e^{u_2}, e_7 =: e^{u_3}.$
We must try to express the $v_i$ in terms of the $u_i$.

To do so, calculate the monodromy from Equation \eqref{eq:cross-ratio}:
$$\begin{array}{ccc}
x_1 = -\frac{u-z}{z-w}\cdot \frac{w-x}{x-u}&
x_2 = -\frac{u-w}{w-x}\cdot \frac{x-y}{y-u}&
x_3 = -\frac{u-x}{x-y}\cdot \frac{y-z}{z-u}\\
x_4 = -\frac{u-y}{y-z}\cdot \frac{z-w}{w-u}&
x_5 = -\frac{v-x}{x-w}\cdot \frac{w-z}{z-v}&
x_6 = -\frac{v-y}{y-x}\cdot \frac{x-w}{w-v}\\
x_7 = -\frac{v-z}{z-y}\cdot \frac{y-x}{x-v}&
x_8 = -\frac{v-w}{w-z}\cdot \frac{z-y}{y-v}&
x_9 = -\frac{w-u}{u-z}\cdot \frac{z-v}{v-w}\\
x_{10} = -\frac{x-u}{u-w}\cdot \frac{w-v}{v-x}&
x_{11} = -\frac{y-u}{u-x}\cdot \frac{x-v}{v-y}&
x_{12} = -\frac{z-u}{u-y}\cdot \frac{y-v}{v-z}\\
\end{array}$$

We have the face relations:
$$\begin{array}{ccc}
x_1 x_2 x_3 x_4 = 1&x_5 x_6 x_7 x_8 = 1\\
x_1 x_9 x_5 x_{10} = 1&x_2 x_{10} x_6 x_{11} = 1\\
x_3 x_{11} x_7 x_{12} = 1& x_4 x_{12} x_8 x_9 = 1\\
\end{array}
$$
We can use the last 3 to eliminate $x_{12}, x_{11}, x_{10}$ in terms of the others,
and use the first two to eliminate $x_4, x_5.$
$$x_4 = \frac{1}{x_1 x_2 x_3},\qquad x_5 = \frac{1}{x_6 x_7 x_6}$$
$$x_{12} = \frac{x_1 x_2 x_3}{x_8 x_9}\qquad x_{11} = \frac{x_8 x_9}{x_1 x_2 x_3^2 x_7},\qquad x_{10} = \frac{x_1 x_3^2 x_7}{x_6 x_8 x_9}.$$
The last relation gives
$$x_8 = \pm \frac{x_1x_3}{x_6}$$
So the codomain of the period map is coordinatized by 
$$\begin{array}{ccc}
x_9 = -\frac{w-u}{u-z}\cdot \frac{z-v}{v-w}&
x_1 = -\frac{u-z}{z-w}\cdot \frac{w-x}{x-u}&
x_2 = -\frac{u-w}{w-x}\cdot \frac{x-y}{y-u}\\
x_3 = -\frac{u-x}{x-y}\cdot \frac{y-z}{z-u}&
x_6 = -\frac{v-y}{y-x}\cdot \frac{x-w}{w-v}&
x_7 = -\frac{v-z}{z-y}\cdot \frac{y-x}{x-v}\\
\end{array}$$
and we can describe the image, the chromatic Lagrangian, by finding relations among combinations of these variables.

Here's one:
$$x_6\left( 1 + \frac{1}{x_1x_3x_7}\right) + \frac{1}{x_7} + 1 = 0$$
We can eliminate
$x_6 = -\frac{x_1x_3x_7 + x_1x_3}{x_1x_3x_7 + 1}.$

In looking for relations, we will set $w = 0, x = 1, u = \infty.$  Then
$$x_9 = -\frac{v-z}{v},\quad x_1 = -\frac{1}{z},\quad x_2 = y-1,$$
$$x_3 = -\frac{y-z}{y-1},\quad x_6 = -\frac{y-v}{y-1}\cdot \frac{1}{v},\quad
x_7 = -\frac{v-z}{z-y}\cdot \frac{y-1}{1-v}$$
Corresponding to our symplectic basis, define
$$a_1 = (x_1 x_3)^{-1},\quad b_1 = x_9,\qquad a_2 = (x_2)^{-1},\quad b_2 = x_3,\qquad a_3 = x_7^{-1},\quad b_3 = x_6.$$
Then $$\omega = \sum_{i=1}^3 \frac{1}{a_ib_i}da_i\wedge db_i =  \sum_{i=1}^3  \frac{1}{U_iV_i}dU_i\wedge dV_i$$
where we have defined mirror coordinates $U_i$ and $V_i$ by the following choice of signs:
$$U_i = -a_i^{-1},\qquad V_i = -b_i^{-1}.$$
We can solve for $y,z,v$ in terms of $U_i$ and use the results to express the $V_i$ as functions of the $U_i$.
$$y = 1+U_2,\qquad z = \frac{1+U_2}{1-U_1U_2},\qquad v = \frac{-(U_1U_2+U_1)U_3+U_2+1}{-(U_1U_2+U_1)U_3-U_1U_2+1}$$
from which we get
$$b_1 = \frac{U_1U_2U_3(U_1+1)}{(U_1U_2-1)(U_1U_3-1)},\quad b_2 = \frac{U_1(U_2+1)}{U_1U_2-1},\quad b_3 = \frac{U_1(U_3+1)}{U_1U_3-1}.$$
(As an example, the expression for $b_3$ is equivalent to the relation found above involving $x_6 = b_3.$)
Lifting to the universal cover, where all Lagrangians are exact, we define
$$u_i = -\log{U_i},\qquad v_i = -\log{V_i}$$
so 
$$\omega = \sum_i du_i \wedge dv_i = -d \sum v_i d u_i.$$
Then by above, the $v_i$ are functions of the $u_i$ and Lagrangianicity can be verified:  $$\partial_{u_j} v_i = \partial_{u_i}v_j.$$
We now ask Mathematica determine a function
$W(U)$ such that $v_i = \partial_{\log U_i}W.$
We find the solution

$$W = \text{Li}_2\left(U_1\right)+\text{Li}_2\left(U_2\right)+\text{Li}_2\left(U_3\right)-\text{Li}_2\left(U_1U_2\right)-\text{Li}_2\left(U_1U_3\right)$$

Since $W$ is expressed purely in terms of dilogarithms of monomials in the $U_i$,
it satisfies the open Gromov-Witten integrality constraint, Equation \eqref{eq:ovintegrality}. 
The result predicts the existence of unique holomorphic disks
in $\bC^3$ bounding the smooth, non-exact Lagrangian $L$ in various homology classes labeled by the $u_i.$
In particular, we expect the following BPS numbers for this OGW framing:
\medskip
\begin{center}
\fbox{$a{(1,0,0)} = a{(0,1,0)} = a{(0,0,1)} = 1,\qquad a{(1,1,0)} = a{(1,0,1)} = -1$}
\end{center}
\medskip

\section*{Appendix:  Physical Contexts}
\label{app:physics}

There is a wide array of physical set-ups where the mathematics of the present paper applies.  Here is a partial list.

\begin{itemize}
\setlength{\itemsep}{5pt}
\vskip 0.1in
\item {\bf Type-IIA on Noncompact Calabi-Yau.}  Type-IIA string theory on a noncompact Calabi-Yau manifold $X$ has
an effective four-dimensional supergravity theory with $\cN = 2$ supersymmetry and $b_2(X)+1$ abelian gauge fields
arising from the Ramond-Ramond sector.  They are part of chiral
superfields.  (In our set-up, the Calabi-Yau manifold is $\bC^3$, so there is more supersymmetry,
but it may be a good idea to think of $\bC^3$ as a special case of the more general set-up.)
Different couplings of these gauge fields are described by topological string amplitudes at genus $g$.  

A D4-brane whose (five-dimensional) world volume fills a two-plane in spactime cross a supersymmetric
Lagrangian three-cycle $L\subset X$
creates a BPS domain wall.  These domain walls have a net effect on the 4d physics.
The corresponding term in the 4d action, which includes a delta function supported on the domain wall,
includes the contribution
of D2-D0 branes ending on the D4-brane.  These can be computed
by open topological string amplitudes:  open Gromov-Witten invariants counting holomorphic
maps from a disk with boundary lying in $L$.  As we recall below, Ooguri-Vafa derived integrality results for open Gromov-Witten theory
by comparison of this set-up with M-Theory, analagous to the Gopakumar-Vafa formula in the closed case.

References: \cite{AGNT, OV}

\item {\bf Type II-A, Part 2}

One could instead consider a D6-brane wrapping all four dimensions of spacetime cross $L\subset X.$
In this case, the open Gromov-Witten invariants contribute couplings involving
chiral fields, $b_1(L)$ in number, the $b_2(X)+1$ closed chiral fields discussed above, and the graviphoton
multiplet.  One such term is the superpotential, and it is computed from disk invariants \cite{OV}.

\item {\bf M-Theory on a $G_2$-holonomy manifold.}

The M-Theory equivalent of IIA on a Calabi-Yau manifold $X$ is M-Theory on $Y =  X\times S^1,$ where the radius of the circle is
related to the string coupling constant.  To model the D4-brane set-up, one takes an M5-brane wrapping $\bR^2\times L\times S^1.$
Contributions from bound D2-D0 branes are encoded by $M2$-branes with boundary on the M5-brane.
The non-perturbative M-theory set-up allows one to calculate these contributions without resorting to the
perturbative methods of Gromov-Witten theory.  Using this perspective, Ooguri-Vafa determine strict integrality conjectures for
open Gromov-Witten theory, analogous to Gopakumar-Vafa formulas in the closed case.  As a result,
the superpotential $W$ of the four-dimensional theory must have an expression as follows.
First choose a (non-canonical) splitting $H_2(X,L;\bZ)\cong H_2(X;\bZ)\oplus H_1(L,\bZ)$ along with bases $\{C_j\}$ for $H_2(X;\bZ)$ and
$\{\gamma_i\}$ for $H_1(L;\bZ)$.  Then introduce corresponding coordinates $q_j, x_i \in \bC^*$; and for $\beta = \sum_j \beta_j C_j
\in H_2(X;\bZ)$ and $\gamma = \sum_i d_i \gamma_i
\in H_1(L;\bZ),$
define $q^\beta := \prod_j q_j^{\beta_j}$ and $x^\gamma := \prod_i x_i^{d_i}.$  Then
$$\phantom{2}\qquad W(t,x_i) = \sum_{n=1}^\infty \sum_{\beta\in H_2(X)}\sum_{\gamma\in H_1(L)}\sum_{s\in \frac{1}{2}\bZ_+} n_{\beta,\gamma,s}\frac{1}{n^2} q^{n\beta} x^{n\gamma}u^s,\qquad n_{\beta,\gamma,s}\in \bZ$$
A special case arises when, as for our examples, $H_2(X)=0$ so the $q$ term disappears (or one can take the $q\to 1$ limit) and we ignore
the spin dependence by setting $u\to 1$ as well.  We then 
have
\begin{equation}
\label{eq:ovintegrality}
{\phantom{2}}\qquad W(x) = \sum_{\gamma \in H_1(L;\bZ)} c_\gamma \Li_2(x^\gamma),\qquad c_\gamma \in \bZ,
\end{equation}
where $\Li_2(x)$ is the dilogarithm function $\sum_{n\geq 1}\frac{x^n}{n^2}.$  The integrality constraint to which we subject our
conjectural superpotentials is that $W$ can be written in this form with the $c_\gamma$ integers.

\item {\bf M-Theory, Part 2}

The M-Theory equivalent of IIA on $X$ with a D6-brane wrapping four-dimensional spacetime cross a Lagrangian $L\subset X$
is entirely geometrical.  There are no branes, but rather, the compactification manifold is a $G_2$-holonomy seven-fold $Y$,
and should take the form
of a singular $S^1$-fibration, where the circle fiber degenerates to a point over the locus $L.$  A local model should be
the Taub-NUT geometry in the four dimensions transverse to (the lift of) $L$ in $Y$.
The effective theory on the 4d spacetime has $\cN=1$ supersymmetry.  There are as many chiral superfields as there
are $L^2$-harmonic three-forms.  (This number equals $b_4(Y) = b_3(Y)$ if $Y$ is compact, but that is not the case here.)
This picture is largely conjectural, but for the case of the (singular) Harvey-Lawson brane it is rigorous, as shown by Atiyah and Witten.

In this set-up, the superpotential for the chiral superfields of the four-dimensional theory is generated by M2-branes.  There are no M5-branes present,
so the M2-branes are closed.  As explained at the end of the Section 3.1 of \cite{AKV}, for example, the superpotential term receives contributions
from M2-instantons which are homology three-spheres.  The three sphere geometry comes from a disk cross the M-theory circle, with the fibers
over the boundary circle of the disk
identified, since the M-theory circle collapses there.

These ideas are explored in the following works:  \cite{AKV, AMV, AVflop, AW}.

\item{\bf Dimensional reduction of six-dimensional theories}

As discussed in Section \ref{sec:priorwork}, the prior works \cite{CCV, CNV, CEHRV} as well as \cite{DGGo, DGGu, DGH}
employ overlapping mathematical machinery to the present work.  

The works of Section \ref{sec:vafaetal} consider the effective theory of M5-branes wrapping spacetime crossed with a branched double cover of $\bR^3$.
The authors arrive at dual three-dimensional theories by considering different Seifert surfaces bounding the branch locus, a tangle.
The effective three-dimensional theory is an abelian $\cN=2$ Chern-Simons theory coupled to chiral matter,
whose partition function can be computed by supersymmetric localization.
The authors also give an interpretation in terms of a quantum-mechanical wavefunction that bears a strong resemblance to the superpotential $W$
computed here.
A main point of these works is that
different Seifert surfaces generate equivalent Lagrangian descriptions of a single quantum theory.
Invariance of the three-dimensional theory amounts to quantum dilogarithm identities, giving a physical explanation of the
Kontsevich-Soibelman wall-crossing formulas.

The works of Section \ref{sec:dimofteetal} propose various dualities by considering dimensional reductions 
of supersymmetric theories on brane worldvolumes.
For example, one can consider a stack of $n$ M5-branes wrapping $\bR^2\times S^1 \times L$, where $L$ is
a (typically noncompact) three-dimensional Lagrangian in a (typically noncompact) Calabi-Yau three-fold $X$. 
Boundary conditions on $L$ are
fixed:  some prototypes are when $L$ is a knot complement for a hyperbolic knot $K\subset S^3$
or a Harvey-Lawson/Aganagic-Vafa brane in a toric Calabi-Yau threefold.  By relating the six-dimensional theory on
the brane to the reduction to \emph{either} $\bR^2 \times S^1$ or to the three-fold $L,$ a duality is proposed in \cite{DGH, DGGu} between
a three-dimensional $\cN = 2$ supersymmetric theory on $\bR^2 \times S^1$ and an $SL(n,\bC)$ Chern-Simons (CS) theory on $L$.
The CS partition function has an expression in terms of dilogarithms arising as the hyperbolic volumes of ideal tetrahedra.
The partition function of the $\cN=2$ theory is an index, counting BPS states, and involves dilogarithms, as we relate below.

[The earlier work of \cite{DGH} considered a further reduction on $S^1$ to a two-dimensional $\cN = (2,2)$ supersymmetric theory on $\bR^2$.
In the reduction to two dimensions, the partition function is expressed as a sum over vortex instantons on $\bC$, and
the vortex center locations are described by symmetric polynomials.  The fixed-point theorem relates
the character over this moduli space to monomial contributions, leading to the dilogarithm expressions above.]

As described in \cite{DGH}, all this can be seen from the spacetime perspective by decoupling BPS states from the
bulk by setting the K\"ahler parameters of the Calabi-Yau to large volume, similar to the $H_2(X,\bZ)=0$ case discussed above.

\end{itemize}

\end{document}